\newtheorem{theorem}{Theorem}
\newtheorem{lemma}[theorem]{Lemma}
\newenvironment{proof}[1][Proof]{\textbf{#1.} }{\ \rule{0.5em}{0.5em}}
\renewcommand{\geq}{\geqslant}
\def\0.5{{\frac{1}{2}}}
\newcommand\1{\leavevmode\hbox{\rm \small1\kern-0.35em\normalsize1}}
\renewcommand{\thefootnote}{\fnsymbol{footnote}}
\begin{document}

\renewcommand{\thefootnote}{\arabic{footnote}}

\begin{center}
{\large \textbf{Parameter Estimation for a partially observed
Ornstein-Uhlenbeck process with long-memory noise}}~\\[0pt]
Brahim El Onsy\footnote{%
Faculty of Sciences and Techniques - Marrakech, Cadi Ayyad University,
Morocco. E-mail: \texttt{brahim.elonsy@gmail.com}} Khalifa Es-Sebaiy%
\footnote{%
National School of Applied Sciences - Marrakesh, Cadi Ayyad University,
Morocco. E-mail: \texttt{k.essebaiy@uca.ma}} Frederi G. Viens \footnote{%
Dept. Statistics and Dept. Mathematics, Purdue University, 150 N. University
St., West Lafayette, IN 47907-2067, USA. E-mail: \texttt{viens@purdue.edu}}
\\[0pt]
\textit{Cadi Ayyad University and Purdue University }\\[0pt]
~\\[0pt]
\end{center}

\noindent \textbf{Abstract}: We consider the parameter estimation problem
for the Ornstein-Uhlenbeck process $X$ driven by a fractional
Ornstein-Uhlenbeck process $V$, i.e. the pair of processes defined by the
non-Markovian continuous-time long-memory dynamics $dX_{t}=-\theta
X_{t}dt+dV_{t};\ t\geq 0$, with $dV_{t}=-\rho V_{t}dt+dB_{t}^{H};\ t\geq 0$,
where $\theta >0$ and $\rho >0$ are unknown parameters, and $B^{H}$ is a
fractional Brownian motion of Hurst index $H\in (\frac{1}{2},1)$. We study
the strong consistency as well as the asymptotic normality of the joint
least squares estimator $\left( \widehat{\theta }_{T},\widehat{\rho }%
_{T}\right) $ of the pair $\left( \theta ,\rho \right) $, based either on
continuous or discrete observations of $\{X_{s};\ s\in \lbrack 0,T]\}$ as
the horizon $T$ increases to +$\infty $. Both cases qualify formally as
partial-observation questions since $V$ is unobserved. In the latter case,
several discretization options are considered. Our proofs of asymptotic
normality based on discrete data, rely on increasingly strict restrictions
on the sampling frequency as one reduces the extent of sources of
observation. The strategy for proving the asymptotic properties is to study
the case of continuous-time observations using the Malliavin calculus, and
then to exploit the fact that each discrete-data estimator can be considered
as a perturbation of the continuous one in a mathematically precise way,
despite the fact that the implementation of the discrete-time estimators is
distant from the continuous estimator. In this sense, we contend that the
continuous-time estimator cannot be implemented in practice in any naïve
way, and serves only as a mathematical tool in the study of the
discrete-time estimators' asymptotics.\newline

\noindent \textbf{Key words}: Least squares estimator; fractional Ornstein
Uhlenbeck process; Multiple integral; Malliavin calculus; Central limit
theorem.

\noindent \textbf{2010 Mathematics Subject Classification:} 60F05; 60G15;
60H05; 60H07.

\section{Introduction}

\subsection{Context and background}

The question of drift parameter estimation for solutions of stochastic
differential equations driven by fractional Brownian noise goes back at
least as far as the seminar work of Kleptsyna and Le Breton in \cite{KL}
(also see prior references therein), where a maximum likelihood estimator
(MLE) was proposed. This work was a first genuine attempt to show how to
compute the MLE in practice in the regular case (self-similarity `Hurst'
parameter $H>1/2$), by relying on continuous-time observation of a single
path over a finite time interval, with strong consistency and asymptotic
normality as the horizon increases to infinity. This work was itself
motivated by -- and is an extension of -- now classical ideas of how to use
the Girsanov theorem to compute the MLE in the case of white noise, as
presented in the 1978 treatise \cite{LS}. The continuous-time data was also
invoked in \cite{KL} to justify that any diffusion-type parameter (any
constant multiplicative term in front of the equation's driving noise) would
then be directly observable. That observation remains generally true in many
contexts for continuous-time data, including when the noise is fractional
Brownian. It explains why so many authors since \cite{KL} have continued to
study the estimation of drift parameters for fractional-noise-driven
problems. We have listed some of these works below. Our paper inscribes
itself in this line of work, and can be viewed as a study of partial
observation questions, as we will explain shortly.

Generally speaking, this effort to understand drift estimation for ergodic
diffusions, even Gaussian ones, is of fundamental importance for any
quantitative study where mean-reverting quantities are believed to be
asymptotically stationary, and are either observed with noise or are
intrinsically stochastic. The best known class of examples, which also
encapsulates the question of whether or not the stochastic process of
interest is observed directly or indirectly, is that of stochastic
volatility in quantitative finance. Some of the original ideas on how to
estimate this volatility's drift parameters is given in the 2000 research
monograph \cite{FPS}. Similar models with fractional noise were introduced
in the late 1990's, as in \cite{CR} for continuous time, but to our
knowledge, their statistical estimation was left unexplored for more than
ten years. The broadest question, applicable in finance and other fields, is
to estimate jointly the drift, diffusion, and memory parameters for
fractional-noise-driven equations; it is typically non-trivial, and with the
exception of one study in \cite{B}, largely unresolved by bona fide
statistical means for non-self-similar discretely observed continuous-time
processes. We will not address this issue here. In the case of partially
observed data, we refer the interested reader to a study in the context of
high-frequency financial data, where a sequential Bayesian methodology is
combined with classical estimation techniques and a calibration method to
find $H$: see \cite{CV}. We also mention a method described in \cite{R} for
a similar model with partial observation, where it is shown that the
minimax-sense optimal estimator has a very slow convergence rate, and relies
on high-frequency data; it can be argued that when applied to bona fide
financial data, this estimator cannot be implemented without leaving the
realm where continuous-time semi-martingale models with long-memory
volatility are appropriate.

The present paper looks specifically at a difficulty which arises when one
single path is used to estimate more than one real drift-type parameter.
Questions of identifiability can arise in this context (see for example the
treatment of the Generalized Method of Moments (GMM) as described in \cite%
{NM}). Once such a question has been resolved, the main practical obstacle
to implementation is typically that of discretizing the data given by the
path, i.e. using only observations in discrete time. For the sake of
conciseness, we look at a specific situation where identifiability is
resolved explicitly and in a natural way, within the task of deriving the
continuous-time and discrete-time estimators, without having to rely on
abstract conditions which would provide a priori identifiability of the
vector of parameters. Our objective is to describe conditions under which
strongly consistent and asymptotically normal estimation can be established
quantitatively, based on discretely-observed data alone. Since access to and
analysis of continuous-time data is not typically a realistic assumption, we
will view any estimators based on continuous-time data as tools in the task
of deriving strong consistency and asymptotic normality for the
discrete-time estimators. Our strategy is to attempt to discretize the
former, which includes a need to approximate Riemann and stochastic
integrals. In the process, we discover some fundamental differences between
the discrete and continuous time estimators. In particular, we find that
while least squares (LS) estimation appears to be the best tool based on
continuous-time observation of a path, when converted to discrete time data,
the estimator's interpretation as a LS optimizer is lost, and a GMM
interpretation seems more appropriate. This introduction, including our
summary of results in Section \ref{Summary}, contains specific details
supporting these ideas.

We are largely motivated by the paper \cite{BPS}, which studies the
drift-estimation problem for the Ornstein-Uhlenbeck process driven itself by
another, unobserved Ornstein-Uhlenbeck process (OU-OU). Their work only
deals with an underlying white noise. Specifically, let $W$ be a standard
Brownian motion and let $\theta $ and $\rho $ be positive real parameters.
The OU-OU process is the solution of the following system%
\begin{equation}
\left\{
\begin{array}{l}
X_{0}=0;\quad dX_{t}=-\theta X_{t}dt+dV_{t},\quad t\geq 0; \\
V_{0}=0;\quad dV_{t}=-\rho V_{t}dt+dW_{t},\quad t\geq 0.%
\end{array}%
\right.   \label{OUOU}
\end{equation}%
Here one may consider that $X$ is observed, while $V$ is not, or conversely,
or that both processes are observed; which interpretation is used makes a
crucial difference, as we will see.

Since the quadratic variation of $V$ is $t$, the classical Girsanov theorem
implies that a natural candidate to estimate $\theta $ is the MLE (recall
that this idea was already contained in \cite{LS}), which can be easily
computed for this model: one gets
\begin{eqnarray}
\widehat{\theta }_{T} &=&\frac{-\int_{0}^{T}X_{t}\delta X_{t}}{%
\int_{0}^{T}X_{t}^{2}dt}  \label{theta^Brownian case} \\
&=&\frac{-X_{T}^{2}+T}{2\int_{0}^{T}X_{t}^{2}dt},
\label{thetaWithSquareDecomposition}
\end{eqnarray}%
In (\ref{theta^Brownian case}), the integral with respect to $X$ must be
understood in the Itô sense. Consequently, line (\ref%
{thetaWithSquareDecomposition}) follows from Itô's formula, and the fact
that $X$ too has quadratic variation equal to $t$. One also notes that this
estimator's construction is in fact non-dependent on the form of the
bounded-variation part in the definition of $V$, which can be interpreted as
a form of robustness of $\widehat{\theta }_{T}$ with respect to model
misspecification, although we are about to see that the behavior of $%
\widehat{\theta }_{T}$ depends heavily on $V$'s drift specification.

On the other hand, it is worth noticing that $\widehat{\theta }_{T}$
coincides formally with a least squares estimator (LSE). Indeed, by
interpreting $\int_{0}^{T}X_{t}\dot{X}_{t}dt$ as the Itô integral in (\ref%
{theta^Brownian case}), $\widehat{\theta }_{T}$ formally minimizes
\begin{equation*}
\theta \longrightarrow \int_{0}^{T}\left\vert \dot{X}_{t}+\theta
X_{t}\right\vert ^{2}dt.
\end{equation*}%
By reversing the roles of $V$ and $X$, one can obtain an estimator for $\rho
$ similar to (\ref{thetaWithSquareDecomposition}). However, if $V$ is
unobserved, one may recast that estimator by using the estimated value of $%
V_{t}$ based on $\widehat{\theta }_{T}$ and the observed path of $X$, based
on (\ref{OUOU}). In other words, one defines
\begin{equation*}
\widehat{\rho }_{T}=\frac{-\widehat{V}_{T}^{2}+T}{2\int_{0}^{T}\widehat{V}%
_{t}^{2}dt}
\end{equation*}%
where $\widehat{V}_{t}=X_{t}+\widehat{\theta }_{T}\int_{0}^{t}X_{t}dt$ for
every $t\leqslant T$. As it turns out, this joint estimator $\left( \widehat{%
\theta }_{T},\widehat{\rho }_{T}\right) $, based on continuous observations
of $X$ alone, \emph{does not} converge to $\left( \theta ,\rho \right) $,
but rather to an explicit rational function of the pair of unknown
parameters $\left( \theta ,\rho \right) $. This was proposed and proved in
\cite{BPS}, wherein a semimartingale approach was used to study the
asymptotic behavior. Specifically they showed

\begin{itemize}
\item \emph{strong consistency}: as $T\longrightarrow +\infty $, almost
surely,%
\begin{eqnarray}
&&\widehat{\theta }_{T}\longrightarrow \theta +\rho   \label{oldthetahat} \\
&&\widehat{\rho }_{T}\longrightarrow \frac{\theta \rho (\theta +\rho )}{%
(\theta +\rho )^{2}+\theta \rho }  \label{oldrhohat}
\end{eqnarray}

\item \emph{asymtpotic normality}: as $T\longrightarrow +\infty $,
\begin{equation*}
\sqrt{T}\left( \widehat{\theta }_{T}-(\theta +\rho ),\widehat{\rho }_{T}-%
\frac{\theta \rho (\theta +\rho )}{(\theta +\rho )^{2}+\theta \rho }\right)
\overset{law}{\longrightarrow }N(0,\Gamma )
\end{equation*}
\end{itemize}

\noindent where $\Gamma $ is a covariance matrix which has a explicit form
as a function of $\theta $ and $\rho $. While intuition gathered from the
full-observation case is in fact erroneous when $V$ is unobserved (the naïve
candidates for $\widehat{\theta }_{T}$ and $\widehat{\rho }_{T}$ lead to
modified limits rather than $\left( \widehat{\theta }_{T},\widehat{\rho }%
_{T}\right) \longrightarrow \left( \theta ,\rho \right) $~), nevertheless we
have the full picture for the asymptotic behavior of the MLEs/LSEs
associated with (\ref{OUOU}), at the minor cost of having to solve a
non-linear system of two equations to obtain consistent estimates of $\left(
\theta ,\rho \right) $.

In the present paper, our goal is to investigate what happens when, in (\ref%
{OUOU}), the standard Brownian motion $W$ is replaced by a fractional
Brownian motion $B^{H}$. Thus we assume from now on that $X$ is an
Ornstein-Uhlenbeck process driven by a fractional Ornstein-Uhlenbeck process
$V$ : this means the pair $\left( X,V\right) $ is the unique solution of the
system of linear stochastic differential equations
\begin{equation}
\left\{
\begin{array}{l}
X_{0}=0;\quad dX_{t}=-\theta X_{t}dt+dV_{t},\quad t\geq 0 \\
V_{0}=0;\quad dV_{t}=-\rho V_{t}dt+dB_{t}^{H},\quad t\geq 0,%
\end{array}%
\right.   \label{OUFOU}
\end{equation}%
where $B^{H}=\left\{ B_{t}^{H},t\geq 0\right\} $ is a fractional Brownian
motion (fBm) with Hurst index $H\in (\frac{1}{2},1)$, and where $\theta >0$
and $\rho >0$ are unknown parameters. Though $X$ is defined for all $H$ in $%
(0,1)$, to keep technical difficulties to a reasonable level, we restrict
ourselves to the case $H\in (1/2,1)$. It turns out that we need the
condition $\theta \neq \rho $ for identifiability; remarkably, this is not
needed when $H=1/2$, as we saw in the system (\ref{oldthetahat}), (\ref%
{oldrhohat}). Details of our results are summarized in Section \ref{Summary}.%
\vspace*{0.12in}

We now provide some references to estimation with fBm noise, which are
further motivations for our work. We mentioned that the single-drift
parameter estimation problem for fractional diffusion processes based on
continuous-time observations was originally studied in \cite{KL} via maximum
likelihood; more recent work on this question includes, e.g., \cite{TV,
Rao2010}. Recently, the LSE for the fractional Ornstein-Uhlenbeck (fOU)
process, i.e. the process $V$ in (\ref{OUFOU}) was proposed in \cite{HN}:
assuming $V$ is fully observed in continuous time, the LSE for $\rho $ is
defined by
\begin{equation*}
\overline{\rho }_{T}:=-\frac{\int_{0}^{T}{V}_{t}\delta {V}_{t}}{\int_{0}^{T}{%
V}_{t}^{2}dt},
\end{equation*}%
where the integral $\int_{0}^{T}{V}_{t}\delta {V}_{t}$ is interpreted in the
sense of Skorokhod. This integral is the extension to fBm of Itô's integral
for Brownian motion. In the case $\rho >0$, \cite{HN} proved that $\overline{%
\rho }_{T}$ is strongly consistent and asymptotically normal as $%
T\rightarrow \infty $. Unlike in the case of Brownian motion ($H=1/2$)
discussed above, $\overline{\rho }_{T}$ does not coincide with the MLE given
in \cite{KL}, because the Girsanov theorem for fBm takes a different form
than in the case $H=1/2$. Given the notorious fact that Skorohod integrals
are difficult to interpret in practical terms for fBm, the authors of \cite%
{HN} proposed in addition the following alternate estimator, which is
arguably a method of moments :%
\begin{equation}
\left( \frac{1}{H\Gamma (2H)T}{\int_{0}^{T}V_{t}^{2}dt}\right) ^{-\frac{1}{2H%
}};  \label{alternative estimator}
\end{equation}%
they proved it is strongly consistent and asymptotically normal. In the case
$\rho <0$, \cite{BEO} established that $\overline{\rho }_{T}$ of $\rho $ is
strongly consistent and asymptotically Cauchy-distributed.

The alternate choice of estimator (\ref{alternative estimator}) does not,
however, avoid the use of continuous observations over discrete ones; this
is a problem with many works on fBm-driven models, and is an additional
motivation for us to study the asymptotics of estimation for fBm-driven
processes based on discrete observations. There exists a rich literature on
this practical problem for ordinary diffusions driven by Brownian motions;
we refer for instance to \cite{Rao2010}. A handful of authors are beginning
to look at these questions with various fBm-driven models, starting with
\cite{TV} in 2007, and more recently \cite{AV, BI, khalifa, NT}. In
particular, for the fOU process $V$, motivated by the estimator given in (%
\ref{alternative estimator}), \cite{HS} studied its natural Riemann-sum-type
time discretization
\begin{equation}
\left( \frac{1}{nH\Gamma (2H)}\sum_{i=1}^{n}X_{{i}}^{2}\right) ^{-\frac{1}{2H%
}}  \label{alternative estimator discrete}
\end{equation}%
providing strongly consistency and Berry-Esséen-type theorems for it. While
we have no doubt that this estimator is indeed strongly consistent and
asymptotically normal, the proofs in \cite{HS} rely on a possibly flawed
technique, since the passage from line -7 to -6 on page 434 therein requires
the condition $H>3/4$, while one expects normal asymptotics only for the
case $H\leqslant 3/4$.\vspace*{0.12in}

In our paper, we focus our discussion on estimators which are derived from a
basic LSE, since that technique is known, at least in the Brownian case
described in \cite{BPS}, to allow for a straightforward bivariate extension,
as mentioned previously. The LSE has also given rise to a number of
successful studies in the univariate case with fractional processes: we have
already cited \cite{AV, BI, khalifa, NT}, while \cite{AM} is the
continuous-time version of \cite{AV}.

Herein, specifically, we begin our study of LSE for $\left( \theta ,\rho
\right) $ in (\ref{OUFOU}) by using the formal least-squares interpretation
mentioned above, i.e. looking for the minimizer of $\theta \longrightarrow
\int_{0}^{T}\left\vert \dot{X}_{t}+\theta X_{t}\right\vert ^{2}dt$; this
leads formally to the following estimator for $\theta $
\begin{equation}
\widehat{\theta }_{T}=-\frac{\int_{0}^{T}X_{t}\delta X_{t}}{%
\int_{0}^{T}X_{t}^{2}dt}  \label{theta^fractional case}
\end{equation}%
and to the similar estimator
\begin{equation}
\widehat{\rho }_{T}=-\frac{\int_{0}^{T}\widehat{V}_{t}\delta \widehat{V}_{t}%
}{\int_{0}^{T}\widehat{V}_{t}^{2}dt}  \label{rho^fractional case}
\end{equation}%
for $\rho $, where, because of the fact that $V$ is unobserved, one uses $%
\widehat{V}_{t}=X_{t}+\widehat{\theta }_{T}\int_{0}^{t}X_{t}dt$ for every $%
t\leqslant T$, instead of relying on the unobserved $V_{t}$ in the
construction of $\widehat{\rho }_{T}$. These estimators $\widehat{\theta }%
_{T},\widehat{\rho }_{T}$ are no longer the MLEs, since, as we mentioned,
the Girsanov theorem for fBm does not have the same form as for Brownian
motion, but they are still formally LSEs. Nevertheless, there is a major
difference with respect to the Brownian motion case. Indeed, since the
process $X$ is no longer a semimartingale, in (\ref{theta^fractional case})
and (\ref{rho^fractional case}) one cannot interpret the numerators using
the Itô integral; the Skorohod integral turns out to be the correct notion
to use here. We mentioned above that Skorohod integrals are difficult to use
in practice, but since our Hurst parameter $H$ exceeds $1/2$, it is possible
to reinterpret the Skorohod integrals as so-called Young integrals, a
pathwise notion, modulo a correction term which we will be able to compute
explicitly thanks to the Malliavin calculus.

Having succeeded in correctly interpreting the stochastic integrals in (\ref%
{theta^fractional case}) and (\ref{rho^fractional case}), the issue of how
to discretize them becomes paramount to practical implementation, and herein
we will propose several different options, some of which allow for strong
consistency and asymptotic normality under broader conditions than others.

Our discrete-observation study also applies to the case $H=1/2$ as a
limiting case. The article \cite{BPS} treats this case solely with
continuous observations; our work thus covers an extension of their work to
discrete observations. Checking the validity of this statement rigorously is
straightforward; for the sake of conciseness, we leave it to the interested
reader.

\subsection{Summary of results and heuristics\label{Summary}}

We now summarize our results, the structure of our article, and our main
proof elements, including useful heuristics when available.

\begin{itemize}
\item In Section 2 and in the Appendix we introduce the needed mathematical
background material for our study, including elements of the Malliavin
calculus, a convenient criterion for establishing normal convergence on
Wiener chaos, and the relation between Skorohod integrals and Young
integrals with respect to fBm when $H>1/2$.

\item In Section 3, we concentrate on proving strong consistency and
asymptotic normality for the estimators $\widehat{\theta }_{T}$ and $%
\widehat{\rho }_{T}$ with continuous observations.

\begin{itemize}
\item We first prove the following almost surely convergences:%
\begin{eqnarray}
\widehat{\theta }_{T} &\rightarrow &\theta ^{\ast }:=\rho +\theta ,
\label{newthetahat} \\
\widehat{\rho }_{T} &\rightarrow &\rho ^{\ast }:=\frac{\rho \theta \left(
\theta +\rho \right) }{\frac{\rho ^{2-2H}-\theta ^{2-2H}}{\theta ^{-2H}-\rho
^{-2H}}+(\theta +\rho )^{2}}.  \label{newrhohat}
\end{eqnarray}%
The proof relies on studying the numerator and the denominator of the
expressions for $\widehat{\theta }_{T}$ and $\widehat{\rho }_{T}$
separately. For the denominators, we rely on Birkhoff's ergodic theorem, and
elementary covariance estimations for exponential convolutions with fBm. For
the numerators, we express the Skorohod integrals as Young integrals plus
their correction terms involving Malliavin derivatives which are explicit
deterministic functions since our processes are Gaussian.

\item The expression $\theta ^{\ast }=\rho +\theta $ in (\ref{newthetahat})
is easy to explain: as noted below in line (\ref{SIDE}), $X$ satisfies a
stochastic integro-differential equation in which the zero-mean-reversion
term $-\left( \rho +\theta \right) X\left( t\right) dt$ appears, and thus a
natural candidate for a consistent estimator of $\rho +\theta $ is the LSE $%
\widehat{\theta }$, whether one adds merely one mean-zero noise term $dB^{H}$
or another term $\left( \int_{0}^{t}X_{s}ds\right) $ which is asymptotically
small. This is why the limiting behavoir of $\widehat{\theta }$ remains the
same for us as in (\ref{oldthetahat}), which is the Brownian case $(H=1/2$)
studied in \cite{BPS}. The details of this heuristic are omitted, since the
full proof we present herein is needed to be convincing. On the other hand,
the expression in (\ref{newrhohat}) for our $\rho ^{\ast }$ is more opaque;
there does not seem to be a direct heuristic to explain it, beyond our
computations. When one compares our $\rho ^{\ast }$ in (\ref{newrhohat})
with the $\rho ^{\ast }$ in (\ref{oldrhohat}) found in \cite{BPS}, one sees
that the term $\rho \theta $ in (\ref{oldrhohat}) is replaced by the
expression $\left( \rho ^{2-2H}-\theta ^{2-2H}\right) /\left( \theta
^{-2H}-\rho ^{-2H}\right) $, which can help identify how the case of fBm
deviates from the case $H=1/2$.\footnote{%
As a way to compare these terms, which do coincide when $H=1/2$, we can see
that if $\rho $ tends to $\theta $, the aforementioned expression in (\ref%
{oldrhohat}) tends to $\theta ^{2}\left( 1-H\right) /H$, which thus deviates
significantly from the case $H=1/2$ quantitatively, particularly for $H$
close to $1$.} The expression for $\rho ^{\ast }$ is analyzed further in the
context of discretizing $\left( \widehat{\theta },\widehat{\rho }\right) $,
which helps explain to some extent why this complicated expression arises,
as the reader will find out in the first paragraph of Section \ref{XandSigma}%
.

\item We prove asymptotic normality of $\left( \widehat{\theta }_{T},%
\widehat{\rho }_{T}\right) $ by expressing the Skorohod integrals as
iterated Wiener integrals, identifying dominant portions of these integrals,
relying on a criterion for normal convergence in law in Wiener chaos,
combined with a number of almost sure convergences. Our main asymptotics
normality result is a central limit theorem that holds for $\sqrt{T}\left(
\widehat{\theta }_{T}-\theta ^{\ast },\widehat{\rho }_{T}-\rho ^{\ast
}\right) $ as $T\rightarrow \infty $, as soon as $H\in \lbrack 1/2,3/4)$.
The asymptotic covariance is given explicitly. See Theorem \ref{convergence
in law theorem}. The upper limit of validity of this theorem is a typical
threshold in normal convergence theorems in the second Wiener chaos. See for
example a classical instance of this situation in the Breuer-Major central
limit theorem, as presented in \cite[Chapter 7]{NP-book}. For $H>3/4$, we
conjecture that the estimators are asymptotically Rosenblatt-distributed
(again see \cite[Chapter 7]{NP-book} for a classical example of such a
phenomenon), and that the convergence occurs almost surely; this point is
not discussed further, for the sake of conciseness.
\end{itemize}

\item The topic of Section 4 is to construct estimators based solely on
discrete observations. The asymptotic results we prove still require
increasing horizon. We also assume that $X$ is observed at evenly spaced
intervals, with a time step $\Delta _{n}$, and we set the time horizon to be
$T_{n}=n\Delta _{n}\rightarrow \infty $ as $n\rightarrow \infty $. Let $%
t_{k}=k\Delta _{n}$ be the $k$th observation time. For instance, the case $%
\Delta _{n}=1$ corresponds to a fixed observation frequency; other
conditions on $\Delta _{n}$ will include requiring $\Delta _{n}$ to tend to $%
0$ as fast as a certain negative power of $n$, i.e. the observation
frequency increases as the horizon increases. For some strong consistency
results, it will even be possible for us to relax conditions on $\Delta _{n}$
where it is allowed to tend to infinity like a power of $n$, i.e. with
decreasing frequency as the horizon increases.

Arguably, to be consistent with the assumption that only $X$ is observed,
the only estimators which are of practical use are those which rely solely
on the values $\left\{ X_{t_{k}}:k=1,\ldots ,n\right\} $. Designing such an
estimator by discretizing $\left( \widehat{\theta }_{T},\widehat{\rho }%
_{T}\right) $ turns out to be a difficult task, in which the final
expression solves a non-linear system in the spirit of that which would
follow from (\ref{newthetahat}) and (\ref{newrhohat}), but is rather
distinct from this system because of the difficulty in how to interpret the
discretizations of the Skorohod or Young integrals.

The method we have chosen moves through several intermediate steps, where we
gradually increase the number of terms in the estimators which are replaced
by discretized versions. This method has the advantage of clearly showing
where the restrictions on the observation frequency $\Delta _{n}$ come into
play. Each intermediate estimator can be considered as a perturbation of the
previous one, starting with the continuous-time estimator of Section 3. Thus
arguably all these estimators can be considered as tools used for the final
objective, attained in Section \ref{Xobserved}, of constructing a strongly
consistent and asymptotically normal estimator of $\left( \theta ,\rho
\right) $ based only on the data $\left\{ X_{t_{k}}:k=1,\ldots ,n\right\} $.
Nonetheless, some of the other estimators are relevant in their own right,
as they might correspond to realistic partial or full observation cases.

\begin{itemize}
\item The main technical estimates which allow our discretization are given
at the beginning of Section 4. These are Lemmas \ref{cv Q(X)} and \ref{cv
Q(Sigma)}, based on applications of the Borel-Cantelli lemma. For $Z$ any
stochastic process, we let
\begin{equation*}
Q_{n}\left( Z\right) :=n^{-1}\sum_{k=1}^{n}\left( Z_{t_{k}}\right) ^{2}.
\end{equation*}%
Let $S_{t}:=\int_{0}^{t}X_{s}^{2}ds$ and $\Sigma _{t}:=\int_{0}^{t}X_{s}ds$.
We show that the discrepancy between $S_{T_{n}}/T_{n}$ and its discrete
version $Q_{n}\left( X\right) $ is $=o\left( 1/\sqrt{T_{n}}\right) $ almost
surely. We then compute three different discrepancies related to $\Sigma $:
first we show that the difference between $T_{n}^{-1}\int_{0}^{T_{n}}\Sigma
_{t}^{2}dt$ and its discrete version $Q_{n}\left( \Sigma \right) $ is also $%
=o\left( 1/\sqrt{T_{n}}\right) $ almost surely. Then we show that with $\hat{%
\Sigma}$ the version of $\Sigma $ which depends only on $X$ observations,
i.e. $\widehat{\Sigma }_{t_{k}}:=\Delta_n\sum_{i=1}^{k}X_{t_{i-1}}^{2}$, we
get that $Q_{n}\left( \hat{\Sigma}\right) -Q_{n}\left( \Sigma \right) $ tend
to $0$ almost surely. This is helpful to prove strong consistency of
discrete estimators. To prove asymptotic normality, we need that $%
Q_{n}\left( \hat{\Sigma}\right) -Q_{n}\left( \Sigma \right) =o\left( 1/\sqrt{%
T_{n}}\right) $, which we prove holds almost surely. Increasingly
restrictive conditions on $\Delta _{n}$ are needed for these successive
results.

\item We first concentrate on discretizing the denominators of $\left(
\widehat{\theta }_{T_{n}},\widehat{\rho }_{T_{n}}\right) $.

\begin{itemize}
\item We replace the denominator of $\widehat{\theta }_{T_{n}}$ by $%
Q_{n}\left( X\right) $, yielding an estimator $\tilde{\theta}_{n}$, and we
then replace the denominator of $\widehat{\rho }_{T}$ by $Q_{n}\left(
X\right) +(\tilde{\theta}_{n})^{2}Q_{n}\left( \Sigma \right) $, yielding an
estimator $\tilde{\rho}_{n}$, because, as it turns out, $\int_{0}^{T}%
\widehat{V}_{t}^{2}dt$ is asymptotically equivalent, almost surely, to $%
Q_{n}\left( X\right) +(\tilde{\theta}_{n})^{2}Q_{n}\left( \Sigma \right) $.
Thanks to this, to the almost sure equivalence of $Q_{n}\left( X\right) $
with $S_{T_{n}}/T_n$, and similarly for $Q_{n}\left( \Sigma \right) $,
coming from Lemmas \ref{cv Q(X)} and \ref{cv Q(Sigma)}, the strong
consistency and asymptotic normality of $\left( \tilde{\theta}_{n},\tilde{%
\rho}_{n}\right) $ follows from that of $\left( \widehat{\theta }_{T_{n}},%
\widehat{\rho }_{T_{n}}\right) $ proved in Section 3. Here it is sufficient
to assume $H\in \left( 1/2,1\right) $ and $\Delta _{n}\leqslant n^{\alpha }\
$for some $\alpha \in (-\infty ,1/H)$ for the strong consistency; note that $%
\Delta _{n} $ is allowed to remain constant or even increase like a moderate
power in this case. For the asymptotic normality, it is sufficient that $%
H\in \left( 1/2,3/4\right) $ and $n\Delta _{n}^{H+1}\rightarrow 0$.

\item A second result is obtained in which we forego having access to the
process $\Sigma $ itself, relying instead on its discrete version $\widehat{%
\Sigma }_{t_{k}}=\Delta _{n}\sum_{i=1}^{k}X_{t_{i-1}}^{2}$; in this case,
almost-sure converge of $\tilde{\rho}_{n}$ requires $n^{\alpha +1}\Delta
_{n}^{H+1}\rightarrow 0$ for some $\alpha >0$, and the central-limit result
for $\tilde{\rho}_{n}$ requires $n^{3}\Delta _{n}^{2H+3}\rightarrow 0$.
\end{itemize}

\item We are then able to define and study a bonafide estimator based on
discrete data alone.

\begin{itemize}
\item We begin with assuming that we have access to both $X_{t_{k}}$ and $%
\Sigma _{t_{k}}$ for all $k=1,\ldots ,n$. The stochastic integrals in $%
\tilde{\theta}_{n}$ and $\tilde{\rho}_{n}$ were analyzed in Section 3, and
were found, under scaling by $T_{n}^{-1}$, to be asymptotically constant,
where the explicit constants depend on the parameters. By using these limits
and a discretization of the Riemann integrals in the denominators of $\tilde{%
\theta}_{n}$ and $\tilde{\rho}_{n}$, this allows us, at the beginning of
Section \ref{XandSigma}, to motivate the definition of a pair of estimators $%
\left( \check{\theta}_{n},\check{\rho}_{n}\right) $ as solution of the
non-linear system
\begin{equation*}
F\left( \check{\theta}_{n},\check{\rho}_{n}\right) =\left(
Q_{n}(X),Q_{n}(\Sigma )\right)
\end{equation*}%
where $F$ is a positive function of the variables $\left( x,y\right) $ in $%
(0,+\infty )^{2}$ defined by: for every $(x,y)\in (0,+\infty )^{2}$
\begin{equation*}
F(x,y)=H\Gamma (2H)\times \left\{
\begin{array}{lcl}
\frac{1}{y^{2}-x^{2}}\left( y^{2-2H}-x^{2-2H},x^{-2H}-y^{-2H}\right) \quad %
\mbox{if }\ x\neq y &  &  \\
\left( (1-H)x^{-2H},Hx^{-2H-2}\right) \quad \mbox{if }\ y=x, &  &
\end{array}%
\right.
\end{equation*}%
and the data statistics used in the system are $Q_{n}\left( X\right) $ and $%
Q_{n}\left( {\Sigma }\right) $. Strong consistency and asymptotic normality
follow for the uniquely defined $\left( \check{\theta}_{n},\check{\rho}%
_{n}\right) $. The delicate computation of the asymptotic covariance is
given. The parameter restrictions remain the same as for $\left( \tilde{%
\theta}_{n},\tilde{\rho}_{n}\right) $, namely strong consistency if $H\in
\left( 1/2,1\right) $ and $\Delta _{n}\leqslant n^{\alpha }\ $for some $%
\alpha \in (-\infty ,1/H)$, and asymptotic normality if $H\in \left(
1/2,3/4\right) $ and $n\Delta _{n}^{H+1}\rightarrow 0$.

\item By redefining the estimators $\left( \check{\theta}_{n},\check{\rho}%
_{n}\right) $ using $\hat{\Sigma}$ instead of $\Sigma $, one ensures that
only the data $\left\{ X_{t_{k}}:k=1,\ldots ,n\right\} $ is used. Here, the
results from the previous case can be applied directly with the auxiliary
results on how $\hat{\Sigma}$ perturbs $\Sigma $ (Lemma \ref{cv Q(Sigma)}),
to obtain the same almost-sure convergence and central-limit result, but
these are now restricted respectively to the aforementioned observation
frequency parameter ranges $n^{\alpha +1}\Delta _{n}^{H+1}\rightarrow 0$ for
some $\alpha >0$, and $n^{3}\Delta _{n}^{2H+3}\rightarrow 0$.
\end{itemize}

\item Finally, to illustrate how the complexity of the nonlinearities in the
definition of $\left( \check{\theta}_{n},\check{\rho}_{n}\right) $ may be
attributable to the partial-observation problem, we define a pair of
estimators $\left( \underline{\theta }_{n},\underline{\rho }_{n}\right) $
under the assumption that both $\left\{ X_{t_{k}}:k=1,\ldots ,n\right\} $
and $\left\{ V_{t_{k}}:k=1,\ldots ,n\right\} $ are available. The $%
\underline{\rho }_{n}$ is explicit given $\left\{ V_{t_{k}}:k=1,\ldots
,n\right\} $, and is identical to the one given in \cite{HS}, i.e. (\ref%
{alternative estimator discrete}), as it should be. The $\underline{\theta }%
_{n}$ satisfies the following straightforward non-linear equation given $%
\underline{\rho }_{n}$and $\left\{ X_{t_{k}}:k=1,\ldots ,n\right\} $:%
\begin{equation*}
\left( \underline{\theta }_{n}\right) ^{2-2H}-\left( \frac{Q_{n}(X)}{H\Gamma
(2H)}\right) \left( \underline{\theta }_{n}\right) ^{2}=\left( \underline{%
\rho }_{n}\right) ^{2-2H}-\left( \frac{Q_{n}(X)}{H\Gamma (2H)}\right) \left(
\underline{\rho }_{n}\right) ^{2}.
\end{equation*}%
The parameter restrictions are the same as when $X$ and $\Sigma $ are
discretely observed: strong consistency holds for $\left( \underline{\theta }%
_{n},\underline{\rho }_{n}\right) $ if $H\in \left( 1/2,1\right) $ and $%
\Delta _{n}\leqslant n^{\alpha }\ $for some $\alpha \in (-\infty ,1/H)$, and
asymptotic normality holds if $H\in \left( 1/2,3/4\right) $ and $n\Delta
_{n}^{H+1}\rightarrow 0$.
\end{itemize}
\end{itemize}

Before we proceed with the details of our study, we provide needed
mathematical tools in Section 2.

\section{Preliminaries}

In this section we describe some basic facts on the stochastic calculus with
respect to a fractional Brownian motion. For a more complete presentation on
the subject, see \cite{nualart-book} and \cite{AN}.\newline
The fractional Brownian motion $(B_{t}^{H},t\geq 0)$ with Hurst parameter $%
H\in (0,1)$, is defined as a centered Gaussian process starting from zero
with covariance
\begin{equation*}
R_{H}(t,s)=E(B_{t}^{H}B_{s}^{H})=\frac{1}{2}\left(
t^{2H}+s^{2H}-|t-s|^{2H}\right) ;\ s,\ t\geq ~0,
\end{equation*}%
We assume that $B^{H}$ is defined on a complete probability space $(\Omega ,%
\mathcal{F},P)$ such that $\mathcal{F}$ is the sigma-field generated by $%
B^{H}$. By Kolmogorov's continuity criterion and the fact $E\left(
B_{t}^{H}-B_{s}^{H}\right) ^{2}=|s-t|^{2H}$, we deduce that $B^{H}$ {admits
a version which} has Hölder continuous paths of any order $\gamma <H$.

Fix a time interval $[0,T]$. We denote by $\mathcal{H}$ the canonical
Hilbert space associated to the fractional Brownian motion $B^{H}$; the book
\cite{nualart-book}, among many other references, can be consulted for the
construction and properties of $\mathcal{H}$. We use the following
convenient notation for Wiener integrals with respect to $B^{H}$:%
\begin{equation*}
B^{H}\left( \varphi \right) :=\int_{0}^{T}\varphi \left( s\right) dB^{H}.
\end{equation*}%
Of interest to us is the fact that, with $H>1/2$, for a pair of (non-random)
functional elements $\varphi ,\psi $ of $\mathcal{H}$, its inner product
satisfies%
\begin{equation*}
\left\langle \varphi ,\psi \right\rangle _{\mathcal{H}}=E\left( B^{H}\left(
\varphi \right) B^{H}\left( \psi \right) \right)
=H(2H-1)\int_{0}^{T}\int_{0}^{T}\varphi (u)\psi (v)|u-v|^{2H-2}dudv.
\end{equation*}%
It follows from \cite{PT} that the set $|\mathcal{H}|$ of functional
elements in $\mathcal{H}$ is Banach and actually contains $L^{\frac{1}{H}%
}([0,T])$.

The Malliavin derivative $D$ w.r.t. $B^{H}$, which is an $\mathcal{H}$%
-values operator, is defined first by setting that
\begin{equation*}
DB^{H}\left( \varphi \right) =\varphi
\end{equation*}%
for any $\varphi \in \mathcal{H}$, and then by requiring that it satisfy a
multi-parameter chain rule: for any $f\in \mathrm{C}_{b}^{\infty }({\
\mathbb{R}}^{n},{\ \mathbb{R}})$ (infinitely differentiable functions from ${%
\mathbb{R}}^{n}$ to ${\mathbb{R}}$ with bounded partial derivatives) and any
$\varphi _{1},...,\varphi _{n}\in \mathcal{H}$, $D$ operates on the cylinder
r.v. $F:=$ $f(B^{H}(\varphi _{1}),...,B^{H}(\varphi _{n}))$ as {%
\begin{equation*}
DF=\sum_{i=1}^{n}\frac{\partial f}{\partial x_{i}}(B^{H}(\varphi
_{1}),...,B^{H}(\varphi _{n}))\varphi _{i}.
\end{equation*}%
} The domain $D^{1,2}$ of $D$ is then the the closure of the set of cylinder
r.v.'s $F$ with respect to the norm{\
\begin{equation*}
\Vert F\Vert _{1,2}^{2}:=E(F^{2})+E(\Vert DF\Vert _{{\mathcal{H}}}^{2}).
\end{equation*}%
} The divergence operator $\delta $ is the adjoint of the derivative
operator $D$ : an ${\mathcal{H}}$-valued r.v. $u\in L^{2}(\Omega ;\mathcal{H}%
)$ belongs to its domain $Dom\delta $ if
\begin{equation*}
E\left\vert \langle DF,u\rangle _{\mathcal{H}}\right\vert \leqslant
c_{u}\Vert F\Vert _{L^{2}(\Omega )}
\end{equation*}%
for some constant $c_{u}$ and every cylinder r.v. $F$. In this case $\delta
(u)$ is uniquely defined by the duality
\begin{equation*}
E(F\delta (u))=E\left\langle DF,u\right\rangle _{\mathcal{H}}
\end{equation*}%
for any $F\in D^{1,2}$. We will make use of the notation
\begin{equation*}
\delta (u)=\int_{0}^{T}u_{s}\delta B_{s}^{H},\quad u\in Dom\delta .
\end{equation*}%
In particular, $\delta $ extends the Wiener integral: for $h\in \left\vert
\mathcal{H}\right\vert $, $B^{H}(h)=\delta (h)=\int_{0}^{T}h_{s}\delta
B_{s}^{H}.$\newline
For every $n\geq 1$, let ${\mathcal{H}}_{n}$ be the nth Wiener chaos of $%
B^{H}$, that is, the closed linear subspace of $L^{2}(\Omega )$ generated by
the random variables $\{H_{n}(B^{H}(h)),h\in {{\mathcal{H}}},\Vert h\Vert _{{%
\mathcal{H}}}=1\}$ where $H_{n}$ is the $n$th Hermite polynomial. The
mapping ${I_{n}(h^{\otimes n})}=n!H_{n}(B^{H}(h))$ provides a linear
isometry between the symmetric tensor product ${\mathcal{H}}^{\odot n}$
(equipped with the modified norm $\Vert .\Vert _{{\mathcal{H}}^{\odot n}}=%
\frac{1}{\sqrt{n!}}\Vert .\Vert _{{\mathcal{H}}^{\otimes n}}$) and ${%
\mathcal{H}}_{n}$. It also turns out that ${I_{n}(h^{\otimes n})}$ is the
multiple Wiener integral of ${h^{\otimes n}}$ w.r.t. $B^{H}$. For every $%
f,g\in {{\mathcal{H}}}^{\odot n}$ the following product formula holds
\begin{equation*}
E\left( I_{n}(f)I_{n}(g)\right) =n!\langle f,g\rangle _{{\mathcal{H}}%
^{\otimes n}}.
\end{equation*}%
For $h\in {\mathcal{H}}^{\otimes n}$, the multiple Wiener integrals $I_{q}(f)
$, which exhaust the set ${\mathcal{H}}_{q}$, satisfy a hypercontractivity
property (equivalence in ${\mathcal{H}}_{q}$ of all $L^{p}$ norms for all $%
p\geq 2$), which implies that for any $F\in \oplus _{l=1}^{q}{\mathcal{H}}%
_{l}$, we have
\begin{equation}
\left( E\big[|F|^{p}\big]\right) ^{1/p}\leqslant c_{p,q}\left( E\big[|F|^{2}%
\big]\right) ^{1/2}\ \mbox{ for any }p\geq 2.  \label{hypercontractivity}
\end{equation}%
It is well-known that $L^{2}(\Omega )$ can be decomposed into the infinite
orthogonal sum of the spaces ${\mathcal{H}}_{n}$. That is, any square
integrable random variable $F\in L^{2}(\Omega )$ admits the following
\textquotedblleft Wiener chaos\textquotedblright\ expansion
\begin{equation*}
F=E(F)+\sum_{n=1}^{\infty }I_{n}(f_{n}),
\end{equation*}%
where the $f_{n}\in {{\mathcal{H}}}^{\odot n}$ are uniquely determined by $F$%
. \newline
Finally, we will use the following central limit theorem for multiple
stochastic integrals (see \cite{NO}).

\begin{theorem}
\label{NO} Let $\{F_{n}\,,n\geq 1\}$ be a sequence of random variables in
the $q$-th Wiener chaos ${\mathcal{H}}_{q}$, $q\geq 2$, such that $%
\lim_{n\rightarrow \infty }E(F_{n}^{2})=\sigma ^{2}$. Then the following
conditions are equivalent:

\begin{itemize}
\item[(i)] $F_n$ converges in law to $\mathcal{N}(0,\sigma^2)$ as $n$ tends
to infinity.

\item[(ii)] $\|DF_n\|^2_{\mathcal{H}} $ converges in $L^2$ to a constant as $%
n$ tends to infinity.
\end{itemize}
\end{theorem}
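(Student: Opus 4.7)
The plan is to exploit Malliavin integration by parts on characteristic functions for one direction, and the so-called fourth-moment characterization for the other. Before doing either, observe that on the $p$-th Wiener chaos the Ornstein--Uhlenbeck generator acts by $L F_n = -p F_n$, so $\delta D F_n = p F_n$. Taking the $L^2$ inner product with $F_n$ and using the duality between $D$ and $\delta$ yields
\begin{equation*}
E[\|DF_n\|^2_{\mathcal H}] \;=\; p\, E[F_n^2]\;\longrightarrow\; p\sigma^2.
\end{equation*}
Hence $\|DF_n\|^2_{\mathcal H}$ is bounded in $L^1$, its mean converges to $p\sigma^2$, and condition (ii) is equivalent to $\|DF_n\|^2_{\mathcal H} \to p\sigma^2$ in $L^2$, i.e.\ $\mathrm{Var}(\|DF_n\|^2_{\mathcal H}) \to 0$.

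For $(\mathrm{ii})\Rightarrow(\mathrm{i})$, I would argue via characteristic functions. Set $\phi_n(\xi) := E[e^{i\xi F_n}]$. Using $F_n = \tfrac{1}{p}\delta(DF_n)$ together with the duality formula applied to the smooth functional $e^{i\xi F_n}$, whose Malliavin derivative is $i\xi e^{i\xi F_n}DF_n$, gives
\begin{equation*}
\phi_n'(\xi) \;=\; iE\bigl[F_n e^{i\xi F_n}\bigr] \;=\; \frac{i}{p}E\bigl[\langle DF_n, D e^{i\xi F_n}\rangle_{\mathcal H}\bigr] \;=\; -\frac{\xi}{p}\, E\bigl[e^{i\xi F_n}\,\|DF_n\|^2_{\mathcal H}\bigr].
\end{equation*}
Since $|e^{i\xi F_n}|\leq 1$ and $\|DF_n\|^2_{\mathcal H}\to p\sigma^2$ in $L^2$ (hence in $L^1$), the bound $|\phi_n'(\xi)+\xi\sigma^2\phi_n(\xi)|\leq \tfrac{|\xi|}{p}\,E|\|DF_n\|^2_{\mathcal H}-p\sigma^2|$ tends to zero uniformly on compact sets of $\xi$. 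Any subsequential limit $\phi$ of the uniformly bounded, equicontinuous family $\{\phi_n\}$ therefore satisfies $\phi'(\xi)=-\xi\sigma^2\phi(\xi)$ with $\phi(0)=1$, which forces $\phi(\xi)=e^{-\sigma^2\xi^2/2}$. By L\'evy's continuity theorem, $F_n\Rightarrow \mathcal N(0,\sigma^2)$.

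For $(\mathrm{i})\Rightarrow(\mathrm{ii})$, the key is to pass through the fourth moment. Hypercontractivity (\ref{hypercontractivity}) gives $\sup_n E[F_n^4]<\infty$, so convergence in law combined with uniform integrability of $\{F_n^4\}$ upgrades to convergence of the fourth moment: $E[F_n^4]\to 3\sigma^4$. Then write $F_n = I_p(f_n)$ and expand $\|DF_n\|^2_{\mathcal H}$ via the product formula for multiple integrals as $p^2\sum_{r=0}^{p-1} r!\binom{p-1}{r}^2 I_{2p-2-2r}(f_n\otimes_{r+1} f_n)$; the crucial algebraic identity (compare \cite[Ch.~5]{NP-book}) is
\begin{equation*}
\mathrm{Var}\bigl(\|DF_n\|^2_{\mathcal H}\bigr) \;\leq\; C_p\bigl(E[F_n^4] - 3\,E[F_n^2]^2\bigr),
\end{equation*}
with $C_p$ depending only on $p$. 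The right-hand side tends to $3\sigma^4-3\sigma^4=0$, which together with $E[\|DF_n\|^2_{\mathcal H}]\to p\sigma^2$ yields $L^2$-convergence to the constant $p\sigma^2$.

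The main obstacle is the algebraic inequality used in $(\mathrm{i})\Rightarrow(\mathrm{ii})$: it requires a careful expansion of $E[F_n^4]$ and of $E[\|DF_n\|^4_{\mathcal H}]$ in terms of the contractions $f_n\otimes_r f_n$, and a verification that the non-trivial cross terms combine with the right signs so that the norms of all contractions with $1\leq r\leq p-1$ are simultaneously controlled by $E[F_n^4]-3E[F_n^2]^2$. The $(\mathrm{ii})\Rightarrow(\mathrm{i})$ half, by contrast, is a soft characteristic-function argument once the identity $\delta DF_n = pF_n$ is in place.
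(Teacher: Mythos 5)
Your proof is correct, and it coincides with the argument in the cited source: the paper does not prove Theorem \ref{NO} itself but quotes it from \cite{NO}, whose proof is exactly your combination of the identity $\delta DF_n=pF_n$ with the characteristic-function ODE for (ii)$\Rightarrow$(i), and the fourth-moment theorem plus the contraction expansion of $\mathrm{Var}(\|DF_n\|^2_{\mathcal H})$ for (i)$\Rightarrow$(ii). The only point worth tightening is that uniform integrability of $\{F_n^4\}$ needs $\sup_n E|F_n|^{4+\varepsilon}<\infty$, which hypercontractivity does supply since $E[F_n^2]$ is bounded, not merely $\sup_n E[F_n^4]<\infty$ as written.
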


\section{Asymptotic behavior of LSEs}

Throughout the paper we assume that $H\in (\frac{1}{2},1)$, $\theta >0$ and $%
\rho >0$ such that $\theta \neq \rho $.\newline
It is readily checked that we have the following explicit expression for $%
X_{t}$:
\begin{equation}
X_{t}=\frac{\rho }{\rho -\theta }X_{t}^{\rho }+\frac{\theta }{\theta -\rho }%
X_{t}^{\theta }  \label{representationX}
\end{equation}%
where for $m>0$
\begin{equation}
X_{t}^{m}=\int_{0}^{t}e^{-m(t-s)}dB_{s}^{H}.  \label{Xm}
\end{equation}%
On the other hand, we can also write that the system (\ref{OUFOU}) implies
that $X$ solves the following stochastic integro-differential equation%
\begin{equation}
dX_{t}=-\left( \theta +\rho \right) X_{t}dt-\rho \theta \left(
\int_{0}^{t}X_{s}ds\right) dt+dB_{t}^{H}.  \label{SIDE}
\end{equation}%
For convenience, and because it will play an important role in the
forthcoming computations, we introduce the following processes related to $%
X_{t}$:
\begin{equation*}
S_{T}=\int_{0}^{T}X_{t}^{2}dt;\quad \Sigma _{T}=\int_{0}^{T}X_{t}dt;\quad
L_{T}=\int_{0}^{T}V_{t}^{2}dt;\quad P_{T}=\int_{0}^{T}X_{t}V_{t}dt;
\end{equation*}%
and
\begin{equation*}
\widehat{L}_{T}=\int_{0}^{T}\widehat{V}_{t}^{2}dt
\end{equation*}%
where for $0\leqslant t\leqslant T$
\begin{equation}  \label{expression V^}
\widehat{V}_{t}=X_{t}+\widehat{\theta }_{T}\Sigma _{t},
\end{equation}%
and $\widehat{\theta }_{T}$ is our continuous LSE for $\theta $ as given in (%
\ref{theta^fractional case}). We will need the following lemmas.

\begin{lemma}
\label{lemme cv ps} Assume $H \in\left(\frac12,1\right)$. Then, as $%
T\rightarrow\infty$
\begin{eqnarray}
&&\frac{1}{T}\int_{0}^{T}X_{t}^{2}dt\longrightarrow\eta^X,
\label{convergence S_T} \\
&&\frac{1}{T}\int_{0}^{T}\Sigma_{t}^{2}dt\longrightarrow\eta^\Sigma,
\label{convergence Sigma_T} \\
&&\frac{1}{T}\int_{0}^{T}\Sigma_{t} X_tdt\longrightarrow0
\label{convergence sigma_T X_T}
\end{eqnarray}
almost surely, where
\begin{equation*}
\eta^X=\frac{H\Gamma(2H)}{\rho^2-\theta^2}[\rho^{2-2H}-\theta^{2-2H}],
\end{equation*}
and
\begin{equation*}
\eta^{\Sigma}=\frac{H\Gamma(2H)}{\rho^2-\theta^2}[\theta^{-2H}-\rho^{-2H}].
\end{equation*}
\end{lemma}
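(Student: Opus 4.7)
The plan is to reduce the three convergences to a single family of almost sure limits for time averages of products of the auxiliary processes $X^m$, and then handle these via Birkhoff's ergodic theorem applied to a stationary extension. First, starting from (\ref{representationX}), one may rewrite $X_t = (\rho - \theta)^{-1}(\rho X_t^\rho - \theta X_t^\theta)$; next, integrating the first equation in (\ref{OUFOU}) from $0$ to $t$ together with $X_0 = V_0 = 0$ yields $\Sigma_t = \theta^{-1}(V_t - X_t)$, and since $V_t = X_t^\rho$ this becomes $\Sigma_t = (\rho - \theta)^{-1}(X_t^\theta - X_t^\rho)$. Consequently each of $X_t^2$, $\Sigma_t^2$, and $\Sigma_t X_t$ is an explicit deterministic quadratic form in $(X_t^\rho, X_t^\theta)$, so the three required convergences will follow from the single almost sure statement
\begin{equation*}
\frac{1}{T}\int_0^T X_t^m X_t^{m'}\, dt \longrightarrow \lambda(m,m'), \qquad m, m' \in \{\theta, \rho\},
\end{equation*}
combined with the closed form for $\lambda(m,m')$ given in Lemma \ref{1mainInequalities}(1).

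To prove this key convergence, I would extend $B^H$ to a two-sided fractional Brownian motion on $\R$ (enlarging the probability space if needed) and set $\tilde X_t^m := \int_{-\infty}^t e^{-m(t-s)}\, dB_s^H$. Then $(\tilde X^\rho, \tilde X^\theta)$ is a stationary centered Gaussian process with $E[\tilde X_0^m \tilde X_0^{m'}] = \lambda(m,m')$ by construction, and its cross-covariances at lag $r$ tend to $0$ as $|r| \to \infty$ (by a routine variant of Lemma \ref{1mainInequalities}(1)--(2) and the decay of $|u-v|^{2H-2}$). A stationary Gaussian process with vanishing covariance at infinity is mixing, hence ergodic, so Birkhoff's ergodic theorem gives $T^{-1}\int_0^T \tilde X_t^m \tilde X_t^{m'}\, dt \to \lambda(m,m')$ almost surely. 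Finally the passage from $\tilde X^m$ to $X^m$ is clean: writing $X_t^m = \tilde X_t^m - e^{-mt}\zeta^m$ with $\zeta^m := \int_{-\infty}^0 e^{ms}\, dB_s^H$ a finite Gaussian random variable not depending on $t$, the discrepancy $X_t^m X_t^{m'} - \tilde X_t^m \tilde X_t^{m'}$ splits into three terms each carrying an exponential factor $e^{-at}$ with $a > 0$, and by Lemma \ref{1mainInequalities}(5) applied to the stationary version (which grows at most sub-polynomially almost surely) each such term, integrated over $[0,T]$ and divided by $T$, vanishes almost surely as $T \to \infty$.

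Assembling these pieces, inserting the representations of $X_t$ and $\Sigma_t$ into each integrand and using the closed form of $\lambda(m,m')$ produces the claimed limits after a direct but tedious algebraic simplification; the identity $\rho^{2-2H} - \theta^{2-2H} = (\rho - \theta) \cdot (\text{symmetric remainder})$ is the key factorisation that brings the $(\rho - \theta)^2$ in the denominators down to $\rho^2 - \theta^2$. The third convergence is actually the cleanest: the limit of $T^{-1}\int_0^T \Sigma_t X_t\, dt$ equals $(\rho - \theta)^{-2}\left[(\rho + \theta)\lambda(\theta,\rho) - \theta\lambda(\theta,\theta) - \rho\lambda(\rho,\rho)\right]$ and direct substitution of the values of $\lambda$ yields zero, which also explains conceptually why it is zero: the limits of $T^{-1}\int_0^T V_t X_t\, dt$ and $T^{-1}\int_0^T X_t^2\, dt$ turn out to coincide, and $\Sigma_t = \theta^{-1}(V_t - X_t)$. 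I expect the main obstacle to be this algebraic simplification, which is not hard but requires careful bookkeeping to reach the advertised compact form of $\eta^X$ and $\eta^\Sigma$; a secondary subtlety is the discrete ergodicity-and-transfer step, which is standard for stationary Gaussian processes with decaying covariance but must be spelled out carefully because $X^m$ itself is not stationary.
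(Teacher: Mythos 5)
Your proposal is correct, and the algebra checks out: inserting $X_t=(\rho-\theta)^{-1}(\rho X_t^{\rho}-\theta X_t^{\theta})$ and $\Sigma_t=(\rho-\theta)^{-1}(X_t^{\theta}-X_t^{\rho})$ and using $\lambda(m,m')=\frac{H\Gamma(2H)}{m+m'}(m^{1-2H}+m'^{1-2H})$ does produce $\eta^X$, $\eta^{\Sigma}$ and $0$ exactly as you describe. Where you differ from the paper is in the ergodicity mechanism. The paper writes $(X,\Sigma)$ as a two-dimensional linear SDE driven by $(B^H,0)$, asserts geometric ergodicity of that system by citing Hairer's theory for fBm-driven SDEs (so that Birkhoff reduces the almost-sure limits to limits of the second moments $E[X_t^2]$, $E[\Sigma_t^2]$, $E[\Sigma_tX_t]$, which it then computes with Lemma \ref{1mainInequalities}); you instead pass to the stationary two-sided versions $\tilde X^m$, invoke the classical fact that a stationary Gaussian process with vanishing covariance is mixing hence ergodic, and transfer back to $X^m$ through the exponentially decaying correction $e^{-mt}\zeta^m$. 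Your route is more self-contained and elementary --- it sidesteps the appeal to ergodicity theory for a degenerate-noise system (note incidentally that the paper's displayed drift matrix $A$ has a sign typo; as written its eigenvalues are $+\theta,+\rho$) --- at the cost of spelling out the two-sided extension and the transfer step, which you do correctly. It is essentially the argument of Hu--Nualart for the one-dimensional fOU process, promoted to the pair $(X^{\theta},X^{\rho})$. One small point of comparison on the third limit: the paper disposes of it more slickly via the pathwise identity $\int_0^T\Sigma_tX_t\,dt=\Sigma_T^2/2$ together with $\Sigma_T/T^{\varepsilon}\to0$ a.s., whereas you obtain the zero by direct cancellation in the $\lambda$'s; both are valid, and your observation that the limits of $T^{-1}\int_0^TV_tX_t\,dt$ and $T^{-1}\int_0^TX_t^2\,dt$ coincide is a nice conceptual explanation of the cancellation.
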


\begin{proof}
From (\ref{OUFOU}) we can write
\begin{equation*}
d\left(
\begin{matrix}
X_{t} \\
\Sigma _{t}%
\end{matrix}%
\right) =A\left(
\begin{matrix}
X_{t} \\
\Sigma _{t}%
\end{matrix}%
\right) dt+d\left(
\begin{matrix}
B_{t}^{H} \\
0%
\end{matrix}%
\right)
\end{equation*}%
where $A=\left(
\begin{matrix}
\theta +\rho & -\theta \rho \\
1 & 0%
\end{matrix}%
\right) .$ The process $\left(
\begin{matrix}
X_{t} \\
\Sigma _{t}%
\end{matrix}%
\right) $ is geometrically ergodic because the largest eigenvalue of $A$ is
negative. Then to prove Lemma \ref{lemme cv ps}, using Birkhoff's ergodic
theorem (for instance see \cite{hairer05}), it is sufficient to study the
convergence of $\mathbf{E}[X_{t}^{2}]$, $\mathbf{E}[\Sigma _{t}^{2}]$ and $%
\mathbf{E}[\Sigma _{t}X_{t}]$ as $t\longrightarrow \infty $.\newline
For the convergence of $\mathbf{E}[X_{t}^{2}]$, (\ref{representationX})
leads to
\begin{equation*}
\mathbf{E}[X_{t}^{2}]=\left( \frac{\rho }{\rho -\theta }\right)^2 \mathbf{E}%
[(X_{t}^{\rho })^{2}]+\left( \frac{\theta }{\theta -\rho }\right) ^{2}%
\mathbf{E}[(X_{t}^{\theta })^{2}]-\frac{2\theta \rho }{(\theta -\rho )^{2}}%
\mathbf{E}[(X^{\theta })_{t}(X_{t}^{\rho })].
\end{equation*}%
Since
\begin{equation*}
\eta ^{X}=\left( \frac{\rho }{\rho -\theta }\right)^2 \lambda (\rho ,\rho
)+\left( \frac{\theta }{\theta -\rho }\right) ^{2}\lambda (\theta ,\theta )-%
\frac{2\theta \rho }{(\theta -\rho )^{2}}\lambda (\theta ,\rho )
\end{equation*}%
then by using 1) of Lemma \ref{1mainInequalities} we obtain
\begin{equation*}
\left\vert \eta ^{X}-\mathbf{E}[X_{t}^{2}]\right\vert \leqslant c(H,\theta
,\rho )e^{-t/2}.
\end{equation*}%
Thus we deduce the convergence (\ref{convergence S_T}). \newline
Using the same argument and the fact that
\begin{equation}
\Sigma _{t}=\frac{V_{t}-X_{t}}{\theta }=\frac{X_{t}^{\theta }-X_{t}^{\rho }}{%
\rho -\theta }  \label{representationSigma}
\end{equation}%
we deduce the convergence (\ref{convergence Sigma_T}). \newline
Finally, the convergence (\ref{convergence sigma_T X_T}) is satisfied by
using $\int_{0}^{T}\Sigma _{t}X_{t}dt=\frac{\Sigma _{T}}{2}$ and point 5) of
Lemma \ref{1mainInequalities}.
\end{proof}

\begin{lemma}
\label{convergence numerator theta}We have
\begin{eqnarray}
\frac{1}{T}\int_0^T X_t\delta X_t\longrightarrow -(\rho+\theta)\eta^X
\end{eqnarray}%
almost surely as $T\longrightarrow\infty$.
\end{lemma}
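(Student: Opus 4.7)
The plan is to reduce $\int_{0}^{T} X_{t}\,\delta X_{t}$ to a sum of a vanishing term and an explicit deterministic quantity whose Ces\`aro limit can be computed. First, from the stochastic integro-differential equation (\ref{SIDE}) and linearity of the Skorohod integral, I would decompose
\begin{equation*}
\int_{0}^{T}X_{t}\,\delta X_{t} = -(\theta+\rho)S_{T} - \rho\theta \int_{0}^{T}X_{t}\Sigma_{t}\,dt + \int_{0}^{T}X_{t}\,\delta B_{t}^{H},
\end{equation*}
reducing the problem to the last Skorohod integral against $B^{H}$.

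Next, I apply formula (\ref{link}) with $u_{t}=X_{t}$, converting it to a Young integral minus a Malliavin trace. The Malliavin derivative is deterministic and, from (\ref{representationX}) and (\ref{Xm}), reads
\begin{equation*}
D_{s}X_{r} = \Bigl[\tfrac{\rho}{\rho-\theta}e^{-\rho(r-s)} + \tfrac{\theta}{\theta-\rho}e^{-\theta(r-s)}\Bigr]\mathbf{1}_{\{s\leq r\}}.
\end{equation*}
For the Young side, since $X$ is H{\"o}lder of any order below $H>1/2$, the chain rule (\ref{chain rule}) gives $\int_{0}^{T}X_{t}\,dX_{t} = X_{T}^{2}/2$, and rewriting (\ref{SIDE}) pathwise delivers
\begin{equation*}
\int_{0}^{T}X_{t}\,dB_{t}^{H} = \frac{X_{T}^{2}}{2} + (\theta+\rho)S_{T} + \rho\theta\int_{0}^{T}X_{t}\Sigma_{t}\,dt.
\end{equation*}
Substituting back, the $S_{T}$ and $X_{t}\Sigma_{t}$ contributions cancel exactly, leaving
\begin{equation*}
\int_{0}^{T}X_{t}\,\delta X_{t} = \frac{X_{T}^{2}}{2} - H(2H-1)\int_{0}^{T}\!\!\int_{0}^{T} D_{s}X_{r}\,|s-r|^{2H-2}\,ds\,dr.
\end{equation*}

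It then remains to divide by $T$ and let $T\to\infty$. The first term vanishes almost surely by point 5) of Lemma \ref{1mainInequalities}. For the double integral, the substitution $u=r-s$ converts the inner integral to $\int_{0}^{r}g(u)\,du$ with $g(u) = [\tfrac{\rho}{\rho-\theta}e^{-\rho u} + \tfrac{\theta}{\theta-\rho}e^{-\theta u}]u^{2H-2}$; since $2H-2>-1$ and $g$ decays exponentially at infinity, $g$ is integrable and $\int_{0}^{r}g(u)\,du$ converges to $\Gamma(2H-1)\frac{\rho^{2-2H}-\theta^{2-2H}}{\rho-\theta}$. Ces\`aro's theorem yields the same limit for the $T$-average, and using $H(2H-1)\Gamma(2H-1) = H\Gamma(2H)$ together with the explicit value of $\eta^{X}$ in Lemma \ref{lemme cv ps}, the limit of $T^{-1}\int_{0}^{T}X_{t}\,\delta X_{t}$ is exactly $-(\theta+\rho)\eta^{X}$.

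The main technical points to verify are the applicability of (\ref{link}) to $u_t=X_t$---which requires $X_t\in D^{1,2}$ and the almost sure finiteness of the Malliavin trace integral, both immediate from the explicit Gaussian expression for $D_sX_r$ above---and the pathwise reading of (\ref{SIDE}) as a Young equation, which follows from $X$ being H{\"o}lder of order greater than $1/2$. Once these are in place, the cancellation of the $S_T$ and $\int_{0}^{T}X_t\Sigma_t\,dt$ terms is purely algebraic, and the proof reduces to the elementary Ces\`aro computation above.
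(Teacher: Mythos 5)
Your proof is correct and follows essentially the same route as the paper: decompose $\int_0^T X_t\,\delta X_t$ via (\ref{SIDE}), convert the Skorohod integral against $B^H$ into a Young integral via (\ref{link}) so that the $S_T$ and $\int_0^T X_t\Sigma_t\,dt$ contributions cancel and only $X_T^2/2$ minus the deterministic Malliavin-trace term survives, then compute the Ces\`aro (equivalently l'H\^{o}pital) limit of that term using point 5) of Lemma \ref{1mainInequalities} for the boundary term. Your bookkeeping of the correction constant $H(2H-1)$ over the region $\{s\le r\}$ is in fact the consistent one, and the identity $H(2H-1)\Gamma(2H-1)=H\Gamma(2H)$ closes the computation exactly as in the paper.
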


\begin{proof}
From (\ref{OUFOU}) and (\ref{link}) we can write
\begin{eqnarray*}
\int_{0}^{T}X_{t}\delta X_{t} &=&-\theta \int_{0}^{T}X_{t}^{2}dt-\rho
\int_{0}^{T}X_{t}V_{t}dt+\int_{0}^{T}X_{t}\delta B_{t}^{H} \\
&=&-\theta \int_{0}^{T}X_{t}^{2}dt-\theta \rho \int_{0}^{T}X_{t}\Sigma
_{t}dt-\rho \int_{0}^{T}X_{t}^{2}dt+\int_{0}^{T}X_{t}dB_{t}^{H} \\
&&\quad -\alpha _{H}\int_{0}^{T}\int_{0}^{t}D_{s}X_{t}(t-s)^{2H-2}dsdt
\end{eqnarray*}%
where $\alpha _{H}=2H(2H-1)$. Moreover,
\begin{eqnarray*}
\int_{0}^{T}X_{t}dB_{t}^{H} &=&\int_{0}^{T}X_{t}dX_{t}+(\theta +\rho
)\int_{0}^{T}X_{t}^{2}dt+\theta \rho \int_{0}^{T}X_{t}\Sigma _{t}dt \\
&=&\frac{X_{T}^{2}}{2}+(\theta +\rho )\int_{0}^{T}X_{t}^{2}dt+\theta \rho
\int_{0}^{T}X_{t}\Sigma _{t}dt.
\end{eqnarray*}%
Thus
\begin{equation*}
\int_{0}^{T}X_{t}\delta X_{t}=\frac{X_{T}^{2}}{2}-\alpha
_{H}\int_{0}^{T}\int_{0}^{t}D_{s}X_{t}(t-s)^{2H-2}dsdt.
\end{equation*}%
Since
\begin{equation*}
D_{s}X_{t}^{m}=e^{-m(t-s)}1_{[0,t]}(s)
\end{equation*}%
we deduce that
\begin{eqnarray}
\frac{1}{T}\int_{0}^{T}X_{t}\delta X_{t} &=&\frac{X_{T}^{2}}{2T}-\frac{%
\alpha _{H}}{T}\int_{0}^{T}\int_{0}^{t}\frac{1}{\rho -\theta }(\rho e^{-\rho
(t-s)}-\theta e^{-\theta (t-s)})(t-s)^{2H-2}dsdt  \notag \\
&=&\frac{X_{T}^{2}}{2T}-\frac{\alpha _{H}}{T}\int_{0}^{T}\int_{0}^{t}\frac{1%
}{\rho -\theta }\left( \rho e^{-\rho r}-\theta e^{-\theta r}\right)
r^{2H-2}drdt.  \label{decomposition of
int_X_delta_X}
\end{eqnarray}%
Thanks to l'Hôpital's rule, as $T\longrightarrow\infty$
\begin{eqnarray*}
\frac{\alpha _{H}}{T}\int_{0}^{T}\int_{0}^{t}\frac{1}{\rho -\theta }\left(
\rho e^{\rho r}-\theta e^{\theta r}\right) r^{2H-2}drdt &\longrightarrow &%
\frac{H\Gamma (2H)}{\rho -\theta }[\rho ^{2-2H}-\theta ^{2-2H}] \\
&=&\frac{(\rho +\theta )\eta ^{X}}{\alpha _{H}}.
\end{eqnarray*}%
Finally, combining this last convergence and point 5) of Lemma \ref%
{1mainInequalities}, the proof of Lemma \ref{convergence numerator theta} is
done.
\end{proof}

We now have all the elements to obtain our strong consistency result for $%
\widehat{\theta }_{T}$.

\begin{theorem}
\label{consistence of theta} We have
\begin{equation*}
\widehat{\theta }_{T}\longrightarrow \theta ^{\ast }
\end{equation*}%
almost surely as $T\longrightarrow \infty $, where $\theta ^{\ast }=\theta
+\rho $.
\end{theorem}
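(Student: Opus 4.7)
The plan is to observe that Theorem \ref{consistence of theta} is an almost immediate consequence of the two preceding lemmas, since $\widehat{\theta}_{T}$ is, by definition (\ref{theta^fractional case}), a ratio of the two quantities whose limits those lemmas identify. Specifically, I would write
\begin{equation*}
\widehat{\theta}_{T} = -\frac{\frac{1}{T}\int_{0}^{T} X_{t}\,\delta X_{t}}{\frac{1}{T}\int_{0}^{T} X_{t}^{2}\,dt}
\end{equation*}
and then apply Lemma \ref{convergence numerator theta} to the numerator and the convergence (\ref{convergence S_T}) of Lemma \ref{lemme cv ps} to the denominator.

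For this ratio argument to be legitimate, I first need to verify that the limiting denominator $\eta^{X}$ is strictly positive, so that division is valid on the event (of probability one) where both convergences hold. This is a short elementary check: writing $\eta^{X} = H\Gamma(2H)\,\frac{\rho^{2-2H}-\theta^{2-2H}}{\rho^{2}-\theta^{2}}$, and noting that $2-2H\in(0,1)$ so that the map $x\mapsto x^{2-2H}$ is strictly increasing on $(0,\infty)$, we see that numerator and denominator of the fraction have the same sign (both positive if $\rho>\theta$, both negative if $\rho<\theta$), hence $\eta^{X}>0$ under our standing assumption $\theta\neq\rho$.

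Combining the two almost-sure convergences with the continuity of the map $(a,b)\mapsto -a/b$ at any point with $b\neq 0$, I then obtain
\begin{equation*}
\widehat{\theta}_{T} \;\longrightarrow\; -\frac{-(\rho+\theta)\,\eta^{X}}{\eta^{X}} \;=\; \rho+\theta \;=\; \theta^{\ast}
\end{equation*}
almost surely as $T\to\infty$, which is the claim. There is essentially no technical obstacle here beyond the sign check on $\eta^{X}$; all of the genuine work (handling the Skorohod numerator via the Young-integral correction in (\ref{link}), computing the deterministic Malliavin derivative $D_{s}X_{t}^{m}$, applying l'Hôpital, and invoking Birkhoff's ergodic theorem for the denominator) has already been done in Lemmas \ref{1mainInequalities}, \ref{lemme cv ps} and \ref{convergence numerator theta}. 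Consequently the proof should occupy only a few lines.
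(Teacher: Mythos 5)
Your proposal is correct and follows exactly the paper's proof, which likewise deduces the theorem directly from the convergence (\ref{convergence S_T}) and Lemma \ref{convergence numerator theta}. The only addition is your explicit verification that $\eta^{X}>0$, a worthwhile sanity check that the paper leaves implicit.
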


\begin{proof}
The proof follows directly from the convergence (\ref{convergence S_T}) and
Lemma \ref{convergence numerator theta}.
\end{proof}

The next lemmas are additional elements needed to prove the strong
consistency of $\widehat{\rho }_{T}$.

\begin{lemma}
\label{convergence L_T hate} We have
\begin{equation*}
\frac{\widehat{L}_{T}}{T}\longrightarrow \eta ^{X}+(\rho +\theta )^{2}\eta
^{\Sigma }
\end{equation*}%
almost surely as $T\longrightarrow \infty $.
\end{lemma}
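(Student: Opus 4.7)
The plan is to expand $\widehat{L}_T$ in terms of the three already-understood quantities $S_T$, $\int_0^T X_t \Sigma_t\,dt$, and $\int_0^T \Sigma_t^2\,dt$, and then invoke the convergences proved in Lemma \ref{lemme cv ps} together with the strong consistency of $\widehat{\theta}_T$ proved in Theorem \ref{consistence of theta}.

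Concretely, I would start from (\ref{expression V^}), namely $\widehat{V}_t = X_t + \widehat{\theta}_T \Sigma_t$, so that squaring and integrating gives
\begin{equation*}
\widehat{L}_T = \int_0^T \widehat{V}_t^2 \,dt = S_T + 2\widehat{\theta}_T \int_0^T X_t \Sigma_t\,dt + \widehat{\theta}_T^{\,2} \int_0^T \Sigma_t^2\,dt.
\end{equation*}
Dividing by $T$ yields
\begin{equation*}
\frac{\widehat{L}_T}{T} = \frac{S_T}{T} + 2\widehat{\theta}_T \cdot \frac{1}{T}\int_0^T X_t \Sigma_t\,dt + \widehat{\theta}_T^{\,2}\cdot \frac{1}{T}\int_0^T \Sigma_t^2\,dt.
\end{equation*}

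Now I would pass to the limit term-by-term as $T\to\infty$. The convergence (\ref{convergence S_T}) gives $S_T/T \to \eta^X$ almost surely; the convergence (\ref{convergence Sigma_T}) gives $T^{-1}\int_0^T \Sigma_t^2\,dt \to \eta^\Sigma$ almost surely; and (\ref{convergence sigma_T X_T}) gives $T^{-1}\int_0^T X_t \Sigma_t\,dt \to 0$ almost surely. Theorem \ref{consistence of theta} supplies $\widehat{\theta}_T \to \theta+\rho$ almost surely, and in particular $\widehat{\theta}_T$ is bounded on the event of convergence, so the cross term $2\widehat{\theta}_T \cdot T^{-1}\int_0^T X_t\Sigma_t\,dt$ tends to $0$ while the last term tends to $(\theta+\rho)^2 \eta^\Sigma$.

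Combining, $\widehat{L}_T/T \to \eta^X + (\theta+\rho)^2 \eta^\Sigma$ almost surely, which is the claim. There is no genuine obstacle in this proof; all the hard work has already been done in Lemma \ref{lemme cv ps} and Theorem \ref{consistence of theta}, and the only thing to be careful about is that the limit of a product of almost sure limits is the product of the limits, which is immediate since $\widehat{\theta}_T$ converges to the finite constant $\theta+\rho$.
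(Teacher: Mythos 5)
Your proposal is correct and is essentially identical to the paper's own proof: the same expansion $\widehat{L}_T = S_T + 2\widehat{\theta}_T\int_0^T X_t\Sigma_t\,dt + \widehat{\theta}_T^{\,2}\int_0^T\Sigma_t^2\,dt$, followed by the same term-by-term application of Lemma \ref{lemme cv ps} and Theorem \ref{consistence of theta}.
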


\begin{proof}
The equation (\ref{expression V^}) ensures
\begin{equation*}
\widehat{L}_{T}=\int_{0}^{T}X_{t}^{2}dt+2\widehat{\theta }%
_{T}\int_{0}^{T}X_{t}\Sigma _{t}dt+\widehat{\theta }_{T}^{2}\int_{0}^{T}%
\Sigma _{t}^{2}dt,
\end{equation*}%
and the desired conclusion follows by using Lemma \ref{lemme cv ps} and
Theorem \ref{consistence of theta}.
\end{proof}


\begin{lemma}
\label{convergence numerator of rho}We have
\begin{equation*}
\frac{1}{T}\int_{0}^{T}\widehat{V}_{t}\delta \widehat{V}_{t}\longrightarrow
-\rho \theta (\rho +\theta )\eta ^{\Sigma }
\end{equation*}%
almost surely as $T\longrightarrow \infty $.
\end{lemma}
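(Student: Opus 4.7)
The approach is to mimic the proof of Lemma \ref{convergence numerator theta}, starting from the splitting
$\widehat{V}_t = V_t + \beta_T\Sigma_t$, where $\beta_T := \widehat{\theta}_T - \theta$ converges almost surely to $\rho$ by Theorem \ref{consistence of theta}. Using the bilinearity of the Skorohod integral (formally pulling out the scalar $\beta_T$, with any Malliavin--Leibniz corrections from this random-scalar pullout expected to be negligible at the $1/T$ scale) and the fact that $\delta\Sigma_t = X_t\,dt$ (since $\Sigma$ is absolutely continuous), I would expand
\begin{equation*}
\int_0^T \widehat{V}_t \delta \widehat{V}_t = \int_0^T V_t \delta V_t + \beta_T\!\left[\int_0^T V_t X_t\,dt + \int_0^T \Sigma_t \delta V_t\right] + \beta_T^2\,\frac{\Sigma_T^2}{2}.
\end{equation*}

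Three of the four resulting pieces are dispatched with tools already in place. The boundary term $\beta_T^2\Sigma_T^2/(2T)\to 0$ almost surely thanks to (\ref{representationSigma}) and Lemma \ref{1mainInequalities}(5). For $\int_0^T V_tX_t\,dt$, substituting $V_t = X_t + \theta\Sigma_t$ and invoking Lemma \ref{lemme cv ps} gives $T^{-1}\int_0^T V_tX_t\,dt\to\eta^X$. For $\int_0^T V_t\delta V_t$, I would repeat the manipulation of Lemma \ref{convergence numerator theta} with $V = X^\rho$ (so $dV_t = -\rho V_t\,dt + dB_t^H$ and $D_sV_t = e^{-\rho(t-s)}\mathbf{1}_{[0,t]}(s)$); combined with l'H\^opital's rule this delivers $T^{-1}\int_0^T V_t\delta V_t \to -\rho\eta^V$ with $\eta^V := H\Gamma(2H)\rho^{-2H}$.

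The core of the proof is the analysis of $\int_0^T \Sigma_t\delta V_t$. Using $dV_t = -\rho V_t\,dt + dB_t^H$, I would split it as $-\rho\int_0^T\Sigma_t V_t\,dt + \int_0^T \Sigma_t\delta B_t^H$. The Young integration-by-parts identity $\Sigma_T V_T = \int_0^T\Sigma_t\,dV_t + \int_0^T V_tX_t\,dt$ combined with the pathwise Young relation $\int_0^T\Sigma_t\,dV_t = -\rho\int_0^T\Sigma_t V_t\,dt + \int_0^T\Sigma_t\,dB_t^H$ lets me solve for $\int_0^T\Sigma_t\,dB_t^H$, and the link (\ref{link}) converts this Young integral into the Skorohod integral plus a deterministic Malliavin correction. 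Because $D_s\Sigma_t = \int_s^t D_sX_u\,du = [e^{-\theta(t-s)} - e^{-\rho(t-s)}]/(\rho-\theta)$ by differentiating (\ref{representationX}), that correction is an explicit exponential-weighted double integral whose $T^{-1}$ limit l'H\^opital identifies as $H\Gamma(2H)[\theta^{1-2H}-\rho^{1-2H}]/(\rho-\theta)$. Substituting everything back, the $\rho\int_0^T\Sigma_t V_t\,dt$ contributions cancel, the boundary term $\Sigma_T V_T/T\to 0$ by Lemma \ref{1mainInequalities}(5), and one obtains
\begin{equation*}
\frac{1}{T}\int_0^T \Sigma_t\delta V_t \longrightarrow -\eta^X - \frac{H\Gamma(2H)}{\rho-\theta}\bigl[\theta^{1-2H} - \rho^{1-2H}\bigr].
\end{equation*}

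Assembling all limits and using $\beta_T\to\rho$, one obtains $T^{-1}\int_0^T\widehat{V}_t\delta\widehat{V}_t \to -\rho\bigl(\eta^V + H\Gamma(2H)[\theta^{1-2H}-\rho^{1-2H}]/(\rho-\theta)\bigr)$, so the proof closes once one verifies the algebraic identity
\begin{equation*}
\eta^V + \frac{H\Gamma(2H)}{\rho-\theta}\bigl[\theta^{1-2H} - \rho^{1-2H}\bigr] = \theta(\rho+\theta)\eta^\Sigma,
\end{equation*}
which follows by plugging in $\eta^V = H\Gamma(2H)\rho^{-2H}$ and $\eta^\Sigma = H\Gamma(2H)(\theta^{-2H}-\rho^{-2H})/(\rho^2-\theta^2)$ and collecting powers of $\theta$ and $\rho$. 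The main obstacle I anticipate is the careful bookkeeping of constants arising from the link formula (especially the $\Gamma(2H-1)$ produced by l'H\^opital on the exponential integrals and its reduction to $H\Gamma(2H)$), together with this final algebraic collapse, where sign and factor-of-two slips are easy to commit.
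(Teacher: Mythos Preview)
Your decomposition $\widehat V_t = V_t + \beta_T\Sigma_t$ and the ensuing limits for $\int_0^T V_t\delta V_t$, $\int_0^T V_tX_t\,dt$, $\int_0^T \Sigma_t\delta V_t$ and the final algebraic identity are all correct, and this route is close in spirit to the paper's (which expands from $\widehat V_t = X_t + \widehat\theta_T\Sigma_t$ instead). The difference is largely cosmetic: your choice makes the $\eta^X$ cancellation appear between the $V_tX_t$ and $\Sigma_t\delta V_t$ pieces, whereas the paper routes everything through $\int_0^T X_t\delta X_t$ and a correction involving $D_s(\widehat\theta_T\Sigma_t)$.

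However, there is a genuine gap at the step you flag yourself. You write that ``any Malliavin--Leibniz corrections from this random-scalar pullout'' are ``expected to be negligible at the $1/T$ scale'', but you never justify this, and it is precisely the technical heart of the lemma. The coefficient $\beta_T=\widehat\theta_T-\theta$ depends on the whole path $\{B^H_s:s\le T\}$, so $\int_0^T\beta_T\Sigma_t\,\delta B_t^H\neq\beta_T\int_0^T\Sigma_t\,\delta B_t^H$; the discrepancy is (up to a constant) $\int_0^T\int_0^t \Sigma_t\,D_s\widehat\theta_T\,(t-s)^{2H-2}ds\,dt$. The paper handles exactly this term: using (\ref{decomposition of int_X_delta_X}) one gets
\[
D_s\widehat\theta_T=\frac{X_TD_sX_T-\widehat\theta_T D_sS_T}{S_T},
\]
and the resulting two contributions (called $J_{1,T}$ and $J_{2,T}$ in the paper) are each shown to be $o(T^\varepsilon)$ almost surely for every $\varepsilon>0$, by combining the explicit exponential form of $D_sX_T$ and $D_sS_T$ with Lemma~\ref{1mainInequalities}(5), (\ref{convergence S_T}), (\ref{representationSigma}) and Theorem~\ref{consistence of theta}. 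This is not a triviality: $D_sS_T$ involves an extra integral $\int_s^T X_uD_sX_u\,du$, and one needs the growth control $\sup_{t\le T}|X_t|/T^\varepsilon\to 0$ and $\sup_{t\le T}|\Sigma_t|/T^\varepsilon\to 0$ to close the estimate. Without this argument your expansion is formal, and the proof is incomplete. Once you supply the analogue of the $J_{1,T},J_{2,T}$ bounds, the rest of your computation goes through and yields the claimed limit.
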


\begin{proof}
From (\ref{OUFOU}) and (\ref{expression V^}) we can write
\begin{eqnarray*}
\int_{0}^{T}\widehat{V}_{t}\delta \widehat{V}_{t} &=&\int_{0}^{T}X_{t}\delta
X_{t}+\widehat{\theta }_{T}\int_{0}^{T}X_{t}^{2}dt-\widehat{\theta }%
_{T}(\theta +\rho )\int_{0}^{T}\Sigma _{t}X_{t}dt-\rho \theta \widehat{%
\theta }_{T}\int_{0}^{T}\Sigma _{t}^{2}dt \\
&&+\int_{0}^{T}\Sigma _{t}\widehat{\theta }_{T}\delta B_{t}^{H}+\widehat{%
\theta }_{T}^{2}\int_{0}^{T}\Sigma _{t}X_{t}dt.
\end{eqnarray*}%
On the other hand
\begin{eqnarray*}
&&\int_{0}^{T}\widehat{\theta }_{T}\Sigma _{t}dB_{t}^{H}  \notag \\
&=&\int_{0}^{T}\widehat{\theta }_{T}\Sigma _{t}dV_{t}+\rho \widehat{\theta }%
_{T}\int_{0}^{T}\Sigma _{t}V_{t}dt  \notag \\
&=&-\theta ^{-1}\int_{0}^{T}\widehat{\theta }_{T}X_{t}dV_{t}+\theta ^{-1}%
\widehat{\theta }_{T}\int_{0}^{T}V_{t}dV_{t}+\rho \widehat{\theta }%
_{T}\int_{0}^{T}\Sigma _{t}X_{t}dt+\rho \theta \widehat{\theta }%
_{T}\int_{0}^{T}\Sigma _{t}^{2}dt  \notag \\
&=&-\theta ^{-1}\int_{0}^{T}\widehat{\theta }_{T}X_{t}dX_{t}-\widehat{\theta
}_{T}\int_{0}^{T}X_{t}^{2}dt+\theta ^{-1}\widehat{\theta }%
_{T}\int_{0}^{T}V_{t}dV_{t}+\rho \widehat{\theta }_{T}\int_{0}^{T}\Sigma
_{t}X_{t}dt+\rho \theta \widehat{\theta }_{T}\int_{0}^{T}\Sigma _{t}^{2}dt
\notag \\
&=&\frac{-1}{2\theta }X_{T}^{2}-\widehat{\theta }_{T}\int_{0}^{T}X_{t}^{2}dt+%
\frac{\widehat{\theta }_{T}}{2\theta }V_{T}^{2}+\rho \widehat{\theta }%
_{T}\int_{0}^{T}\Sigma _{t}X_{t}dt+\rho \theta \widehat{\theta }%
_{T}\int_{0}^{T}\Sigma _{t}^{2}dt.  \label{J^1}
\end{eqnarray*}%
Now, applying (\ref{link}), we obtain
\begin{eqnarray}
\int_{0}^{T}\widehat{V}_{t}\delta \widehat{V}_{t} &=&\int_{0}^{T}X_{t}\delta
X_{t}-\frac{1}{2\theta }X_{T}^{2}+\frac{\widehat{\theta }_{T}}{2\theta }%
V_{T}^{2}-\widehat{\theta }_{T}\theta \int_{0}^{T}\Sigma _{t}X_{t}dt+%
\widehat{\theta }_{T}^{2}\int_{0}^{T}\Sigma _{t}X_{t}dt  \notag \\
&&-\alpha _{H}\int_{0}^{T}\int_{0}^{t}D_{s}(\widehat{\theta }_{T}\Sigma
_{t})(t-s)^{2H-2}dsdt.  \label{^v d ^v}
\end{eqnarray}%
On the other hand
\begin{equation*}
D_{s}(\widehat{\theta }_{T}\Sigma _{t})=\Sigma _{t}D_{s}\widehat{\theta }%
_{T}+\widehat{\theta }_{T}D_{s}\Sigma _{t}.
\end{equation*}%
It follows from (\ref{decomposition of int_X_delta_X}) that
\begin{eqnarray}
\widehat{\theta }_{T} &=&\frac{\int_{0}^{T}X_{t}\delta X_{t}}{S_{T}}  \notag
\\
&=&\frac{\frac{1}{2}X_{T}^{2}-\alpha _{H}\int_{0}^{T}\int_{0}^{t}\frac{1}{%
\rho -\theta }\left( \rho e^{-\rho r}-\theta e^{-\theta r}\right)
r^{2H-2}drdt}{S_{T}}.  \notag
\end{eqnarray}%
Hence, for $s<T$
\begin{equation*}
D_{s}\widehat{\theta }_{T}=\frac{X_{T}D_{s}X_{T}-\widehat{\theta }%
_{T}D_{s}S_{T}}{S_{T}}.
\end{equation*}%
Thus
\begin{eqnarray*}
&&\alpha _{H}\int_{0}^{T}\int_{0}^{t}D_{s}(\widehat{\theta }_{T}\Sigma
_{t})(t-s)^{2H-2}dsdt \\
&=&\alpha _{H}\frac{X_{T}}{S_{T}}\int_{0}^{T}\int_{0}^{t}\Sigma
_{t}D_{s}X_{T}(t-s)^{2H-2}dsdt-\alpha _{H}\frac{\widehat{\theta }_{T}}{S_{T}}%
\int_{0}^{T}\int_{0}^{t}\Sigma _{t}D_{s}S_{T}(t-s)^{2H-2}dsdt \\
&&+\alpha _{H}\widehat{\theta }_{T}\int_{0}^{T}\int_{0}^{t}D_{s}\Sigma
_{t}(t-s)^{2H-2}dsdt \\
&:=&J_{1,T}-J_{2,T}+J_{3,T}
\end{eqnarray*}%
We shall prove that for every $\varepsilon>0$
\begin{eqnarray}
&&\frac{|J_{1,T}|}{T^{\varepsilon }}\longrightarrow 0,
\label{convergence J_1} \\
&&\frac{|J_{2,T}|}{T^{\varepsilon }}\longrightarrow 0,
\label{convergence
J_2} \\
&&\frac{J_{3,T}}{T}\longrightarrow \frac{\theta +\rho }{\rho -\theta }%
H\Gamma (2H)[(-\rho )^{1-2H}-(-\theta )^{1-2H}]  \label{convergence
J_3}
\end{eqnarray}%
almost surely as $T\rightarrow \infty $.\newline
We first estimate $J_{1,T}$. Clearly, (\ref{representationX}) implies
\begin{eqnarray*}
J_{1,T} &=&\alpha _{H}\frac{X_{T}}{S_{T}}\int_{0}^{T}\int_{0}^{t}\Sigma
_{t}D_{s}X_{T}(t-s)^{2H-2}dsdt \\
&=&\frac{\alpha _{H}}{\rho -\theta }\frac{X_{T}}{S_{T}}\int_{0}^{T}\Sigma
_{t}\int_{0}^{t}\left( \rho e^{-\rho (T-s)}-\theta e^{-\theta (T-s)}\right)
(t-s)^{2H-2}dsdt \\
&=&\frac{\alpha _{H}}{\rho -\theta }\frac{X_{T}}{S_{T}}\int_{0}^{T}\Sigma
_{t}\int_{0}^{t}\left( \rho e^{-\rho (T-t+x)}-\theta e^{-\theta
(T-t+x)}\right) x^{2H-2}dxdt.
\end{eqnarray*}%
The last equality comes from making the change of variable $x=t-s$. \newline
Hence
\begin{equation*}
\frac{|J_{1,T}|}{T^{\varepsilon }}\leqslant c(H,\theta,\rho)\frac{%
|X_{T}|/T^{\varepsilon }}{S_{T}/T}\frac{\sup_{t\in \lbrack 0,T]}|\Sigma _{t}|%
}{T^{\varepsilon}}.
\end{equation*}%
Using (\ref{convergence S_T}), (\ref{representationSigma}) and the point 5)
of Lemma \ref{1mainInequalities}, the convergence (\ref{convergence J_1}) is
obtained. \newline
Next we estimate $J_{2,T}$. By (\ref{representationX}) we have
\begin{eqnarray*}
J_{2,T} &=&\alpha _{H}\frac{\widehat{\theta }_{T}}{S_{T}}\int_{0}^{T}%
\int_{0}^{t}\Sigma _{t}D_{s}S_{T}(t-s)^{2H-2}dsdt \\
&=&2\alpha _{H}\frac{\widehat{\theta }_{T}}{S_{T}}\int_{0}^{T}\int_{0}^{t}%
\Sigma _{t}\int_{s}^{T}X_{u}D_{s}X_{u}(t-s)^{2H-2}dudsdt \\
&=&2\alpha _{H}\frac{\widehat{\theta }_{T}}{(\rho -\theta )S_{T}}%
\int_{0}^{T}\int_{0}^{t}\Sigma _{t}\int_{s}^{T}X_{u}\left( \rho e^{-\rho
(u-s)}-\theta e^{-\theta (u-s)}\right) (t-s)^{2H-2}dudsdt.
\end{eqnarray*}%
Then
\begin{eqnarray*}
\frac{\left\vert J_{2,T}\right\vert }{T^{\varepsilon}} &\leqslant
&c(H,\theta,\rho)\frac{\left\vert \widehat{\theta }_{T}\right\vert }{S_{T}}%
\frac{\sup_{t\in \lbrack 0,T]}|X_{t}|\sup_{t\in \lbrack 0,T]}|\Sigma _{t}|}{%
T^{\varepsilon}}\int_{0}^{T}\int_{0}^{t} e^{-\min(\theta,\rho) (T-s)}
(t-s)^{2H-2}dsdt \\
&\leqslant & c(H,\theta,\rho)\frac{\left\vert \widehat{\theta }%
_{T}\right\vert }{S_{T}/T}\frac{\sup_{t\in \lbrack 0,T]}|X_{t}|\sup_{t\in
\lbrack 0,T]}|\Sigma _{t}|}{T^{\varepsilon}} \\
&\longrightarrow &0
\end{eqnarray*}
almost surely as $T\longrightarrow \infty $. The last convergence comes from
(\ref{convergence S_T}), (\ref{representationSigma}), Theorem \ref%
{consistence of theta} and the point 5) of Lemma \ref{1mainInequalities}.
Thus, the convergence (\ref{convergence J_2}) is satisfied. \newline
Finally, we estimate $J_{3,T}$. Using (\ref{representationSigma}) and (\ref%
{representationX})
\begin{eqnarray*}
J_{3,T} &=&\alpha _{H}\widehat{\theta }_{T}\int_{0}^{T}\int_{0}^{t}D_{s}%
\Sigma _{t}(t-s)^{2H-2}dsdt \\
&=&\frac{\alpha _{H}\widehat{\theta }_{T}}{\rho -\theta }\int_{0}^{T}%
\int_{0}^{t}\left( e^{-\theta (t-s)}-e^{-\rho (t-s)}\right) (t-s)^{2H-2}dsdt.
\end{eqnarray*}%
By l'Hôpital rule we obtain
\begin{equation*}
\frac{J_{3,T}}{T}\longrightarrow \frac{\theta +\rho }{\rho -\theta }H\Gamma
(2H)[\theta ^{1-2H}-\rho ^{1-2H}]
\end{equation*}%
almost surely as $T\longrightarrow \infty $. \newline
Using the above estimations (\ref{convergence J_1}), (\ref{convergence J_2}%
), (\ref{convergence J_3}) together with (\ref{^v d ^v}), Lemma \ref{lemme
cv ps}, Theorem \ref{consistence of theta}, the point 5) of Lemma \ref%
{1mainInequalities} and Lemma \ref{convergence numerator theta} the desired
result is then obtained.
\end{proof}

\begin{theorem}
\label{consistence of rho} We have the almost sure convergence
\begin{equation*}
\widehat{\rho}_T\longrightarrow\rho^{*}
\end{equation*}
as $T\rightarrow \infty$, where
\begin{equation*}
\rho^{*} =\frac{\theta\rho(\theta+\rho)\eta^{\Sigma}}{\eta^{X}+(\theta+%
\rho)^2\eta^{\Sigma}}.
\end{equation*}
\end{theorem}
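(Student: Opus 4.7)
The plan is to obtain the convergence $\widehat{\rho}_T \to \rho^*$ directly by combining the two lemmas immediately preceding the theorem statement, together with the defining formula \eqref{rho^fractional case} for the estimator. The bulk of the analytic work has already been packaged into Lemma \ref{convergence numerator of rho} and Lemma \ref{convergence L_T hate}, so the remaining step is purely a quotient argument.

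First I would rewrite
\begin{equation*}
\widehat{\rho}_T \;=\; -\frac{\int_0^T \widehat{V}_t\,\delta \widehat{V}_t}{\int_0^T \widehat{V}_t^{\,2}\,dt}
\;=\; -\,\frac{\frac{1}{T}\int_0^T \widehat{V}_t\,\delta \widehat{V}_t}{\widehat{L}_T/T},
\end{equation*}
so as to isolate the two quantities whose asymptotics have been computed. Lemma \ref{convergence numerator of rho} asserts that the numerator converges almost surely to $-\rho\theta(\rho+\theta)\eta^\Sigma$, and Lemma \ref{convergence L_T hate} asserts that the denominator converges almost surely to $\eta^X + (\theta+\rho)^2 \eta^\Sigma$. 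A crucial sanity check is that the limiting denominator is strictly positive; this follows because $\eta^X$ and $\eta^\Sigma$ are both strictly positive (as can be read off from their explicit expressions in Lemma \ref{lemme cv ps}, using $\theta\neq \rho$, $\theta,\rho>0$, and $H\in(1/2,1)$, so that both $\rho^{2-2H}-\theta^{2-2H}$ and $\theta^{-2H}-\rho^{-2H}$ have the same sign as $\rho-\theta$ and are divided by $\rho^2-\theta^2$ of the same sign).

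With the denominator bounded away from zero almost surely for $T$ large, the ratio of the two almost sure limits gives
\begin{equation*}
\widehat{\rho}_T \longrightarrow \frac{\rho\theta(\theta+\rho)\eta^\Sigma}{\eta^X+(\theta+\rho)^2\eta^\Sigma} \;=\; \rho^*
\end{equation*}
almost surely as $T\to\infty$, which is exactly the asserted convergence. There is no real obstacle beyond assembling these pieces; any conceivable difficulty (the positivity of the limiting denominator, and the justification that $\widehat{L}_T>0$ for all sufficiently large $T$ so that division is legitimate) is immediate from the explicit formulas for $\eta^X$ and $\eta^\Sigma$. Consequently the proof is a one-line consequence of the preceding two lemmas.
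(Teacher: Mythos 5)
Your proof is correct and is exactly the paper's argument: the authors likewise prove Theorem \ref{consistence of rho} as an immediate quotient of the limits in Lemma \ref{convergence numerator of rho} (numerator) and Lemma \ref{convergence L_T hate} (denominator). Your added verification that $\eta^X$ and $\eta^\Sigma$ are strictly positive, so the limiting denominator is nonzero, is a sound detail the paper leaves implicit.
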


\begin{proof}
The proof is a straightforward consequence of Lemma \ref{convergence L_T
hate} and Lemma \ref{convergence numerator of rho}.
\end{proof}

Our approach to prove the asymptotic normality for both estimators $\widehat{%
\theta }_{T}$ and $\widehat{\rho }_{T}$ looks first at the normal
convergence of the $T$-indexed second-chaos sequence based on the kernel
which appears in the representation (\ref{Xm}) of $X$. Thereafter, thanks to
elementary stochastic calculus in the second chaos, these double stochastic
integrals will be identified in an expression for the leading terms in $%
\widehat{\theta }_{T}-\theta ^{\ast }$ in the proof of Theorem \ref%
{convergence in law theorem}. A similar technique, plus the use of the chain
rule of Young integrals and their relation to Skorohod integrals, is used to
find again that the leading terms in $\widehat{\rho }_{T}-\rho ^{\ast }$ are
also linear combinations of the same double integrals; the analysis of the
lower-order terms are less evident than for $\widehat{\theta }_{T}-\theta
^{\ast }$; the proof of Theorem \ref{convergence in law theorem} records all
the details.

We are ready to prove the asymptotic normality of $\left( \widehat{\theta }%
_{T},\widehat{\rho }_{T}\right) $.

\begin{theorem}
\label{convergence in law theorem} Assume that $H\in(\frac12,\frac34)$. Then
\begin{equation*}
\sqrt{T}\left( \widehat{\theta }_{T}-\theta ^{\ast },\widehat{\rho }%
_{T}-\rho ^{\ast }\right) \overset{law}{\longrightarrow }\mathcal{N}(0, ^tP\
\Gamma\ P )
\end{equation*}%
where the matrices $\Gamma $ and $P $ are defined respectively in (\ref%
{matrix Gamma}) and (\ref{matrix Sigma}).
\end{theorem}

\begin{proof}
We express $\widehat{\theta }_{T}-\theta ^{\ast }$ and $\widehat{\rho }%
_{T}-\rho ^{\ast }$ as linear combinations of the double stochastic
integrals identified in the previous theorem, plus lower-order terms.\newline
The case of $\widehat{\theta }_{T}-\theta ^{\ast }$ is rather
straightforward. It follows from (\ref{SIDE}) that
\begin{equation*}
\widehat{\theta }_{T}-\theta ^{\ast }=\frac{\rho \theta
\int_{0}^{T}X_{t}\Sigma _{t}dt-\int_{0}^{T}X_{t}\delta B_{t}}{S_{T}}.
\end{equation*}%
Since
\begin{equation*}
\int_{0}^{T}X_{t}\Sigma _{t}dt=\frac{1}{2}\Sigma _{T}^{2}
\end{equation*}%
and
\begin{eqnarray*}
\frac{1}{\sqrt{T}}\int_{0}^{T}X_{t}\delta B_{t}^{H} &=&\frac{1}{(\rho
-\theta )\sqrt{T}}\int_{0}^{T}\int_{0}^{t}(\rho e^{-\rho (t-s)}-\theta
e^{-\theta (t-s)})\delta B_{s}^{H}\delta B_{t}^{H} \\
&=&\frac{1}{(\rho -\theta )\sqrt{T}}\left( \rho I_{2}(f_{T}^{\rho })-\theta
I_{2}(f_{T}^{\theta })\right)
\end{eqnarray*}%
we can write
\begin{equation}
\sqrt{T}\left( \widehat{\theta }_{T}-\theta ^{\ast }\right) =\frac{\frac{1}{%
(\rho -\theta )\sqrt{T}}\left( \theta I_{2}(f_{T}^{\theta })-\rho
I_{2}(f_{T}^{\rho })\right) }{S_{T}/T}+R_{T}^{\theta }.
\label{expression theta-theta*}
\end{equation}%
where
\begin{equation}
R_{T}^{\theta }:=\frac{\rho \theta }{2}\frac{\Sigma _{T}^{2}/\sqrt{T}}{%
S_{T}/T}\longrightarrow 0  \label{Rthetaneg}
\end{equation}%
almost surely as $T\longrightarrow \infty $. \newline
For $\widehat{\rho }_{t}-\rho ^{\ast }$, the situation is significantly more
complex. We have for every $0\leqslant t\leqslant T$
\begin{eqnarray*}
\widehat{V}_{t} &=&X_{t}+\widehat{\theta }_{T}\Sigma _{t}=V_{t}+(\widehat{%
\theta }_{T}-\theta )\Sigma _{t}=V_{t}+(\widehat{\theta }_{T}-\theta ^{\ast
})\Sigma _{t}+\rho \Sigma _{t} \\
&=&V_{t}-\frac{\rho }{\theta }(X_{t}-V_{t})-\frac{1}{\theta }(\widehat{%
\theta }_{T}-\theta ^{\ast })(X_{t}-V_{t}) \\
&=&\frac{\theta ^{\ast }}{\theta }V_{t}-\frac{\rho }{\theta }X_{t}-\frac{1}{%
\theta }(\widehat{\theta }_{T}-\theta ^{\ast })(X_{t}-V_{t})
\end{eqnarray*}%
which leads to
\begin{equation*}
\widehat{L}_{T}=\int_{0}^{T}\widehat{V}_{t}^{2}dt=I_{T}+(\widehat{\theta }%
_{T}-\theta ^{\ast })(J_{T}+(\widehat{\theta }_{T}-\theta ^{\ast })K_{T})
\end{equation*}%
where
\begin{eqnarray*}
I_{T} &=&\frac{1}{\theta ^{2}}(\rho ^{2}S_{T}+(\theta ^{\ast
})^{2}L_{T}-2\theta ^{\ast }\rho P_{T}), \\
J_{T} &=&\frac{1}{\theta ^{2}}(2\rho S_{T}+2\theta ^{\ast }L_{T}-2(\theta
+2\rho )P_{T}), \\
K_{T} &=&\frac{1}{\theta ^{2}}(S_{T}+L_{T}-2P_{T}).
\end{eqnarray*}%
Thus,
\begin{equation*}
\widehat{L}_{T}(\widehat{\rho }_{T}-\rho ^{\ast })=I_{T}^{V}+(\widehat{%
\theta }_{T}-\theta ^{\ast })(J_{T}^{V}+(\widehat{\theta }_{T}-\theta ^{\ast
})K_{T}^{V})
\end{equation*}%
where
\begin{eqnarray*}
I_{T}^{V} &=&-\int_{0}^{T}\widehat{V}_{t}\delta \widehat{V}_{t}-\rho ^{\ast
}I_{T}=-\frac{\widehat{V}_{T}^{2}}{2}+\alpha
_{H}\int_{0}^{T}\int_{0}^{t}D_{s}\widehat{V}_{t}(t-s)^{2H-2}dsdt-\rho ^{\ast
}I_{T}, \\
J_{T}^{V} &=&-\rho ^{\ast }J_{T},\mbox{ and }K_{T}^{V}=-\rho ^{\ast }K_{T}.
\end{eqnarray*}%
On the other hand, using the formula (\ref{chain rule}) we obtain
\begin{equation*}
\left\{
\begin{array}{lcl}
S_{T}=-\frac{X_{T}^{2}}{2\theta ^{\ast }}+\frac{1}{\theta ^{\ast }}%
\int_{0}^{T}X_{s}dB_{s}^{H}-\frac{\rho \theta }{2\theta ^{\ast }}\Sigma
_{T}^{2}, &  &  \\
P_{T}=-\frac{X_{T}V_{T}}{\theta ^{\ast }}+\frac{1}{\theta ^{\ast }}%
\int_{0}^{T}X_{s}dB_{s}^{H}+\frac{V_{T}^{2}}{2\theta ^{\ast }}, &  &  \\
L_{T}=-\frac{V_{T}^{2}}{2\rho }+\frac{1}{\rho }\int_{0}^{T}V_{s}dB_{s}^{H}.
&  &
\end{array}%
\right.
\end{equation*}%
Furthermore, using the relation between Young and Skorohod integrals,
\begin{equation}
\left\{
\begin{array}{lcl}
S_{T}=-\frac{X_{T}^{2}}{2\theta ^{\ast }}+\frac{1}{\theta ^{\ast }}%
\int_{0}^{T}X_{s}\delta B_{s}^{H}+\frac{\alpha _{H}}{\theta ^{\ast }}%
\int_{0}^{T}\int_{0}^{t}D_{s}X_{t}(t-s)^{2H-2}dsdt-\frac{\rho \theta }{%
2\theta ^{\ast }}\Sigma _{T}^{2}, &  &  \\
P_{T}=-\frac{X_{T}V_{T}}{\theta ^{\ast }}+\frac{1}{\theta ^{\ast }}%
\int_{0}^{T}X_{s}\delta B_{s}^{H}+\frac{\alpha _{H}}{\theta ^{\ast }}%
\int_{0}^{T}\int_{0}^{t}D_{s}X_{t}(t-s)^{2H-2}dsdt+\frac{V_{T}^{2}}{2\theta
^{\ast }}, &  &  \\
L_{T}=-\frac{V_{T}^{2}}{2\rho }+\frac{1}{\rho }\int_{0}^{T}V_{s}\delta
B_{s}^{H}+\frac{\alpha _{H}}{\rho }\int_{0}^{T}%
\int_{0}^{t}D_{s}V_{t}(t-s)^{2H-2}dsdt. &  &
\end{array}%
\right.
\end{equation}%
Setting
\begin{equation*}
\lambda _{T}:=\alpha _{H}\int_{0}^{T}\int_{0}^{t}e^{-(t-s)}(t-s)^{2H-2}dsdt
\end{equation*}%
we can write
\begin{eqnarray*}
\lambda _{T}^{X}:= &&\alpha
_{H}\int_{0}^{T}\int_{0}^{t}D_{s}X_{t}(t-s)^{2H-2}dsdt=\frac{1}{\rho -\theta
}\left( \rho ^{2-2H}-\theta ^{2-2H}\right) \lambda _{T} \\
\lambda _{T}^{V}:= &&\alpha
_{H}\int_{0}^{T}\int_{0}^{t}D_{s}V_{t}(t-s)^{2H-2}dsdt=\rho ^{1-2H}\lambda
_{T} \\
\lambda _{T}^{\Sigma }:= &&\alpha _{H}\int_{0}^{T}\int_{0}^{t}D_{s}\Sigma
_{t}(t-s)^{2H-2}dsdt=\frac{1}{\theta }(\lambda _{T}^{V}-\lambda _{T}^{X})=%
\frac{1}{\rho -\theta }\left( \theta ^{1-2H}-\rho ^{1-2H}\right) \lambda _{T}
\\
\lambda _{T}^{\widehat{V}}:= &&\alpha _{H}\int_{0}^{T}\int_{0}^{t}D_{s}%
\widehat{V}_{t}(t-s)^{2H-2}dsdt=\lambda _{T}^{X}+\widehat{\theta }%
_{T}\lambda _{T}^{\Sigma }+J_{1,T}-J_{2,T} \\
&=&\lambda _{T}^{X}+\theta ^{\ast }\lambda _{T}^{\Sigma }+(\widehat{\theta }%
_{T}-\theta ^{\ast })\lambda _{T}^{\Sigma }+J_{1,T}-J_{2,T}.
\end{eqnarray*}%
The last equality comes from the fact that $D_{s}\widehat{V}_{t}=D_{s}X_{t}+%
\widehat{\theta }_{T}D_{s}\Sigma _{t}+\Sigma _{t}D_{s}\widehat{\theta }_{T}$%
. \newline
Since
\begin{equation*}
\lambda _{T}^{X}+\theta ^{\ast }\lambda _{T}^{\Sigma }=\frac{\rho ^{\ast }}{%
\theta ^{2}}\left( -\frac{\rho ^{2}}{\theta ^{\ast }}\lambda _{T}^{X}-\frac{%
(\theta ^{\ast })^{2}}{\rho }\lambda _{T}^{V}+2\rho \lambda _{T}^{X}\right)
\end{equation*}%
we can write
\begin{eqnarray*}
I_{T}^{V} &=&-\frac{\widehat{V}_{T}^{2}}{2}+\lambda _{T}^{\widehat{V}}-\rho
^{\ast }I_{T} \\
&=&(\widehat{\theta }_{T}-\theta ^{\ast })\lambda _{T}^{\Sigma }-\frac{\rho
^{\ast }}{\theta ^{2}}\left[ (-2\rho +\frac{\rho ^{2}}{\theta ^{\ast }}%
)\int_{0}^{T}X_{s}\delta B_{s}^{H}+\frac{(\theta ^{\ast })^{2}}{\rho }%
\int_{0}^{T}V_{s}\delta B_{s}^{H}\right] +R_{T}
\end{eqnarray*}%
where
\begin{equation*}
R_{T}=\frac{-\widehat{V}_{T}^{2}}{2}+J_{1,T}-J_{2,T}-\frac{\rho ^{\ast }}{%
\theta ^{2}}\left[ \rho ^{2}(-\frac{X_{T}^{2}}{2\theta ^{\ast }}-\frac{\rho
\theta }{2\theta ^{\ast }}\Sigma _{T}^{2}-(\theta ^{\ast })^{2}\frac{%
V_{T}^{2}}{2\rho }-2\theta ^{\ast }\rho (\frac{-X_{T}V_{T}}{\theta ^{\ast }}+%
\frac{V_{T}^{2}}{2\theta ^{\ast }})\right] .
\end{equation*}%
Combining previous estimations we obtain
\begin{equation*}
\widehat{L}_{T}(\widehat{\rho }_{T}-\rho ^{\ast })=c_{T}^{\rho
}I_{2}(f_{T}^{\rho })+c_{T}^{\theta }I_{2}(f_{T}^{\theta })+\frac{%
R_{T}^{\theta }}{\sqrt{T}}\left( \lambda _{T}^{\Sigma }+J_{T}^{V}+(\widehat{%
\theta }_{T}-\theta ^{\ast })K_{T}^{V}\right) +R_{T}
\end{equation*}%
where
\begin{eqnarray}
c_{T}^{\rho } &=&\frac{-\rho \lambda _{T}^{\Sigma }}{(\rho -\theta )S_{T}}-%
\frac{\rho \rho ^{\ast }(-2\rho +\frac{\rho ^{2}}{\theta ^{\ast }})}{\theta
^{2}(\rho -\theta )}-\frac{\rho ^{\ast }(\theta ^{\ast })^{2}}{\rho \theta
^{2}}-\frac{\rho J_{T}^{V}}{(\rho -\theta )S_{T}}-\frac{\rho (\widehat{%
\theta }_{T}-\theta ^{\ast })K_{T}^{V}}{(\rho -\theta )S_{T}}  \notag \\
&\longrightarrow &c^{\rho }=\frac{-\rho \lambda ^{\Sigma }}{(\rho -\theta
)\eta ^{X}}-\frac{\rho \rho ^{\ast }(-2\rho +\frac{\rho ^{2}}{\theta ^{\ast }%
})}{\theta ^{2}(\rho -\theta )}-\frac{\rho ^{\ast }(\theta ^{\ast })^{2}}{%
\rho \theta ^{2}}-\frac{\rho \lambda ^{J}}{(\rho -\theta )\eta ^{X}}
\end{eqnarray}%
almost surely as $T\longrightarrow \infty $, and
\begin{eqnarray}
c_{T}^{\theta } &=&\frac{\theta \lambda _{T}^{\Sigma }}{(\rho -\theta )S_{T}}%
+\frac{\theta \rho ^{\ast }(-2\rho +\frac{\rho ^{2}}{\theta ^{\ast }})}{%
\theta ^{2}(\rho -\theta )}-\frac{\rho ^{\ast }(\theta ^{\ast })^{2}}{\rho
\theta ^{2}}+\frac{\theta J_{T}^{V}}{(\rho -\theta )S_{T}}+\frac{\theta (%
\widehat{\theta }_{T}-\theta ^{\ast })K_{T}^{V}}{(\rho -\theta )S_{T}}
\notag \\
&\longrightarrow &c^{\theta }=\frac{\theta \lambda ^{\Sigma }}{(\rho -\theta
)\eta ^{X}}+\frac{\theta \rho ^{\ast }(-2\rho +\frac{\rho ^{2}}{\theta
^{\ast }})}{\theta ^{2}(\rho -\theta )}-\frac{\rho ^{\ast }(\theta ^{\ast
})^{2}}{\rho \theta ^{2}}+\frac{\theta \lambda ^{J}}{(\rho -\theta )\eta ^{X}%
}
\end{eqnarray}%
almost surely as $T\longrightarrow \infty $. These last two convergences
come from the fact that
\begin{equation*}
\lambda _{T}^{\Sigma }/T\longrightarrow \lambda ^{\Sigma }=\frac{2H\Gamma
(2H)}{\rho -\theta }\left[ \theta ^{1-2H}-\rho ^{1-2H}\right]
\end{equation*}%
and
\begin{equation*}
J_{T}^{V}/T\longrightarrow \lambda ^{J}=\frac{2}{\theta ^{2}}\left[ \theta
^{\ast }\eta ^{X}-H\Gamma (2H)\theta ^{\ast }\rho ^{-2H}\right]
\end{equation*}%
almost surely as $T\longrightarrow \infty $, because $\lambda
_{T}/T\longrightarrow 2H\Gamma (2H)$ as $T\longrightarrow \infty $. \newline
Thus
\begin{equation}
\sqrt{T}(\widehat{\rho }_{T}-\rho ^{\ast })=\frac{\frac{1}{\sqrt{T}}\left(
c_{T}^{\rho }I_{2}(f_{T}^{\rho })+c_{T}^{\theta }I_{2}(f_{T}^{\theta
})\right) }{\frac{\widehat{L}_{T}}{T}}+R_{T}^{\rho }
\label{expression rho-rho*}
\end{equation}%
where as $T\longrightarrow \infty $
\begin{equation}
R_{T}^{\rho }=R_{T}^{\theta }\frac{\left( \lambda _{T}^{\Sigma }+J_{T}^{V}+(%
\widehat{\theta }_{T}-\theta ^{\ast })K_{T}^{V}\right) }{\widehat{L}_{T}}+%
\frac{R_{T}/\sqrt{T}}{\frac{\widehat{L}_{T}}{T}}\longrightarrow 0
\label{Rrhoneg}
\end{equation}%
almost surely. \newline
Finally, with the expressions (\ref{expression theta-theta*}) and (\ref%
{expression rho-rho*}) on hand, and the almost-sure negligibility of their
corresponding lower-order terms as proved in (\ref{Rthetaneg}) and (\ref%
{Rrhoneg}), we get%
\begin{equation*}
\sqrt{T}\left( \widehat{\theta }_{T}-\theta ^{\ast },\widehat{\rho }%
_{T}-\rho ^{\ast }\right) =\frac{1}{\sqrt{T}}\left( I_{2}(f_{T}^{\theta
}),I_{2}(f_{T}^{\rho })\right) \left(
\begin{matrix}
\frac{\theta }{\rho -\theta }\frac{T}{S_{T}} & c_{T}^{\theta }\frac{T}{%
\widehat{L}_{T}} \\
\frac{\rho }{\theta -\rho }\frac{T}{S_{T}} & c_{T}^{\rho }\frac{T}{\widehat{L%
}_{T}}%
\end{matrix}%
\right) +\left( R_{T}^{\theta },R_{T}^{\rho }\right)
\end{equation*}%
where as $T\longrightarrow \infty $
\begin{equation}
\left(
\begin{matrix}
\frac{\theta }{\rho -\theta }\frac{T}{S_{T}} & c_{T}^{\theta }\frac{T}{%
\widehat{L}_{T}} \\
\frac{\rho }{\theta -\rho }\frac{T}{S_{T}} & c_{T}^{\rho }\frac{T}{\widehat{L%
}_{T}}%
\end{matrix}%
\right) \longrightarrow P :=\left(
\begin{matrix}
\frac{\theta }{\rho -\theta }\frac{1}{\eta ^{X}} & \frac{c^{\theta }}{\eta ^{%
\widehat{L}}} \\
\frac{\rho }{\theta -\rho }\frac{1}{\eta ^{X}} & \frac{c^{\rho }}{\eta ^{%
\widehat{L}}}%
\end{matrix}%
\right)  \label{matrix Sigma}
\end{equation}%
almost surely. Now, applying Slutsky's lemma and Theorem \ref{key asymptotic
normality} combined with the above convergences, the proof is complete.
\end{proof}

\section{Discrete observation}

Assume that the process $X$ is observed equidistantly in time with the step
size $\Delta _{n}$: $t_{i}=i\Delta _{n},i=0,\ldots ,n$, and $T_{n}=n\Delta
_{n}$ denotes the length of the `observation window'. The goal of this
section is to construct two estimators $\check{\theta}_{n}$ and $\check{\rho}%
_{n}$ of $\theta $ and $\rho $ respectively based on the sampling data $%
X_{t_{i}},i=0,\ldots ,n$, and study their strong consistency and asymptotic
normality. We also want to define estimators in such a way that consistency
and normality results proved in Section 3 for the continuous-data estimators
$\left( \widehat{\theta }_{T},\widehat{\rho }_{T}\right) $ can be used to
good effect in the discrete case. The basic strategy for this is therefore
to look for ways of discretizing the MLE\ studied in Section 3. It turns out
that the most efficient way of implementing this strategy is to define
several intermediate estimators, starting with ones where only the
denominators in $\left( \widehat{\theta }_{T},\widehat{\rho }_{T}\right) $
are discretized, and then using an algebraic asymptotic interpretation of
the numerators to avoid a direct discretization of the corresponding Young
or Skorohod integrals. This method allows a rather direct use of the
asymptotic normality Theorem \ref{convergence in law theorem} in Section 3,
while for the strong consistency results, some of the almost-sure
convergences proved in Section 3 are used directly, and additional ones are
newly established early on in this section. See Section \ref{Summary} for
other details about the heuristics which explain the choices made below in
this Section.

For any given process $Z$, define%
\begin{equation*}
Q_{n}(Z):=\frac{1}{n}\sum_{i=1}^{n}(Z_{t_{i-1}})^{2}
\end{equation*}%
The following well-known direct consequence of the Borel-Cantelli Lemma (see
e.g. \cite{KN}), will allows us to turn convergence rates in the $p$-th mean
into pathwise convergence rates. This is particularly efficient when working
with sequences in Wiener chaos.

\begin{lemma}
\label{pathwise convergence} Let $\gamma >0$ and $p_{0}\in \mathbb{N}$.
Moreover let $(Z_{n})_{n\in \mathbb{N}}$ be a sequence of random variables.
If for every $p\geq p_{0}$ there exists a constant $c_{p}>0$ such that for
all $n\in \mathbb{N}$,
\begin{equation*}
(E|Z_{n}|^{p})^{1/p}\leqslant c_{p}\cdot n^{-\gamma },
\end{equation*}%
then for all $\varepsilon >0$ there exists a random variable $\eta
_{\varepsilon }$ such that
\begin{equation*}
|Z_{n}|\leqslant \eta _{\varepsilon }\cdot n^{-\gamma +\varepsilon }\quad %
\mbox{almost surely}
\end{equation*}%
for all $n\in \mathbb{N}$. Moreover, $E|\eta _{\varepsilon }|^{p}<\infty $
for all $p\geq 1$.
\end{lemma}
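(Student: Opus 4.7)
The plan is to define $\eta_\varepsilon$ explicitly as the supremum
$\eta_\varepsilon := \sup_{n\geq 1} |Z_n| \, n^{\gamma - \varepsilon}$,
so that the pathwise bound $|Z_n| \leq \eta_\varepsilon \, n^{-\gamma + \varepsilon}$ holds tautologically for every $n$. The task then reduces to proving that $\eta_\varepsilon$ is almost surely finite and, more strongly, that it has finite moments of every order.

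For the almost-sure finiteness, I would apply Markov's inequality with an exponent $p \geq p_0$ chosen later: for any $\delta \in (0,\varepsilon)$,
$$P\bigl(|Z_n| > n^{-\gamma + \delta}\bigr) \leq n^{p(\gamma - \delta)} E|Z_n|^p \leq c_p^p \, n^{-p\delta}.$$
Choosing $p$ large enough that $p\delta > 1$ makes the right-hand side summable in $n$, so by the Borel-Cantelli lemma, almost surely $|Z_n| \leq n^{-\gamma + \delta}$ for all but finitely many $n$. Taking for instance $\delta = \varepsilon/2$, this gives $|Z_n|\, n^{\gamma - \varepsilon} \leq n^{-\varepsilon/2}$ eventually, so the supremum $\eta_\varepsilon$ is indeed almost surely finite (the finitely many exceptional terms are bounded a.s. individually, hence do not affect the sup).

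For the moment bound, I would use the crude but effective estimate
$$\eta_\varepsilon^{\,p} \;\leq\; \sum_{n=1}^{\infty} |Z_n|^p \, n^{p(\gamma-\varepsilon)},$$
valid for any $p \geq 1$, and integrate termwise by monotone convergence. The hypothesis gives $E[|Z_n|^p] \, n^{p(\gamma-\varepsilon)} \leq c_p^p \, n^{-p\varepsilon}$, and for $p$ large enough that $p\varepsilon > 1$ this series converges, showing $\eta_\varepsilon \in L^p$. The finiteness of lower-order moments is automatic since the underlying space is a probability space, on which $\|\cdot\|_{L^p}$ is nondecreasing in $p$.

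No serious obstacle is expected. The only subtlety is that the exponent $p$ used in both the Borel-Cantelli step and the moment-summability step must be chosen sufficiently large in terms of $\varepsilon$; this is precisely why the hypothesis must provide a bound in $L^p$ for every $p \geq p_0$ rather than at a single fixed order, and it is also the reason why one necessarily loses an arbitrarily small $\varepsilon$ from the rate $n^{-\gamma}$ when passing from $L^p$-convergence to a pathwise statement.
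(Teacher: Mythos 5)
Your proof is correct. The paper itself gives no proof of this lemma, merely citing Kloeden--Neuenkirch \cite{KN}, and your argument (defining $\eta_\varepsilon=\sup_n |Z_n|\,n^{\gamma-\varepsilon}$, bounding $\eta_\varepsilon^p$ by the series $\sum_n |Z_n|^p n^{p(\gamma-\varepsilon)}$, and choosing $p\geq p_0$ with $p\varepsilon>1$) is precisely the standard one from that reference; note only that your Borel--Cantelli step is redundant, since the moment bound $E[\eta_\varepsilon^p]<\infty$ already implies $\eta_\varepsilon<\infty$ almost surely.
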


As before we assume that $\Delta _{n}=t_{k+1}-t_{k}$ is a function of $n$
only. Of some importance, particularly for the purpose of proving normal
convergence theorems, is the case $n^{-\alpha }$ with a given $\alpha \in {\
\mathbb{R}}$. The case $\alpha >0$ implies that the observation frequency
must increase even as the horizon itself also increases. The case of $\alpha
=0$ is of special importance because it corresponds to a setup where the
observation frequency is fixed ( $\Delta _{n}=1$, no in-fill asymptotics,
only increasing horizon), which may be desirable in some applications. We
will see that for some almost-sure convergence results, we may even take a
time step $\Delta _{n}$ which grows with $n$. In other words, this allows
for very sparse observations. We will also see that most almost-sure results
are valid for the entire range $H\in (\frac{1}{2},1)$, while normal
convergence results require $H\in (\frac{1}{2},\frac{3}{4})$. We begin by
recording and proving some important technical estimates.

\begin{lemma}
\label{cv Q(X)} Define $\delta _{n}(X):=\sqrt{T_{n}}\left( Q_{n}(X)-\frac{%
S_{T_{n}}}{T_{n}}\right) $. Then
\begin{equation}
E\left[ \delta _{n}^{2}(X)\right] \leqslant c(H,\theta ,\rho )\min \left(
n\Delta _{n}^{2H+1},\frac{1}{n\Delta _{n}}+\Delta _{n}^{H+1}+\Delta
_{n}^{4H-3}\sum_{j=1}^{n}j^{4H-4}\right) .  \label{cv Q(X)-S}
\end{equation}%
In particular, if $\Delta _{n}\leqslant n^{\alpha }\ $for some $\alpha \in
(-\infty ,1/H)$, then
\begin{equation}
Q_{n}(X)\longrightarrow \eta ^{X}  \label{convergence Q(X)}
\end{equation}%
almost surely as $n\rightarrow \infty $.
\end{lemma}

\begin{proof}
The points 3) and 6) of Lemma \ref{1mainInequalities} lead to
\begin{eqnarray*}
E\left[ \delta _{n}^{2}(X)\right] &=&\frac{1}{T_{n}}\sum_{i,j=1}^{n}%
\int_{t_{i-1}}^{t_{i}}\int_{t_{j-1}}^{t_{j}}E\left[ \left(
X_{t}^{2}-X_{t_{i-1}}^{2}\right) \left( X_{s}^{2}-X_{t_{j-1}}^{2}\right) %
\right] dsdt \\
&\leqslant&c(H,\theta,\rho)n\Delta _{n}^{2H+1}.
\end{eqnarray*}%
On the other hand, we have
\begin{eqnarray*}
&&E\left[ \delta _{n}^{2}(X)\right] \\
&=&\frac{1}{T_{n}}\sum_{i,j=1}^{n}\int_{t_{i-1}}^{t_{i}}%
\int_{t_{j-1}}^{t_{j}}E\left( X_{t}^{2}-X_{t_{i-1}}^{2}\right) E\left(
X_{s}^{2}-X_{t_{j-1}}^{2}\right) dsdt \\
&&+\frac{1}{T_{n}}\sum_{i,j=1}^{n}\int_{t_{i-1}}^{t_{i}}%
\int_{t_{j-1}}^{t_{j}}E\left[ \left( X_{t}-X_{t_{i-1}}\right) \left(
X_{s}-X_{t_{j-1}}\right) \right] E\left[ \left( X_{t}+X_{t_{i-1}}\right)
\left( X_{s}+X_{t_{j-1}}\right) \right] dsdt \\
&&+\frac{1}{T_{n}}\sum_{i,j=1}^{n}\int_{t_{i-1}}^{t_{i}}%
\int_{t_{j-1}}^{t_{j}}E\left[ \left( X_{t}-X_{t_{i-1}}\right) \left(
X_{s}+X_{t_{j-1}}\right) \right] E\left[ \left( X_{t}+X_{t_{i-1}}\right)
\left( X_{s}-X_{t_{j-1}}\right) \right] dsdt \\
:= &&\frac{1}{T_{n}}\sum_{i,j=1}^{n}\left(
D_{1}(i,j)+D_{2}(i,j)+D_{3}(i,j)\right) .
\end{eqnarray*}%
By using the points 2), 4) and 6) of Lemma \ref{1mainInequalities} we obtain
\begin{eqnarray*}
\frac{1}{T_{n}}\sum_{i,j=1}^{n}D_{1}(i,j) &=&\frac{1}{T_{n}}\left[
\sum_{i=1}^{n}\int_{t_{i-1}}^{t_{i}}E\left( X_{t}^{2}-X_{t_{i-1}}^{2}\right)
dt\right] ^{2} \\
&\leqslant &\frac{c(H,\theta ,\rho )}{T_{n}}\left[ \Delta
_{n}\sum_{i=1}^{n}e^{-t_{i-1}/2}\right] ^{2} \\
&\leqslant &\frac{c(H,\theta ,\rho )}{T_{n}}\left[ \frac{\Delta _{n}}{%
1-e^{-\Delta _{n}/2}}\right] ^{2},
\end{eqnarray*}%
\begin{eqnarray*}
&&\frac{1}{T_{n}}\sum_{i=1}^{n}\left( D_{2}(i,i)+D_{3}(i,i)\right) \\
&=&\frac{1}{T_{n}}\sum_{i=1}^{n}\left(
\int_{t_{i-1}}^{t_{i}}\int_{t_{i-1}}^{t_{i}}\left[
2(E(X_{t}X_{s}))^{2}+2(E(X_{t_{i-1}}^{2}))^{2}-4(E(X_{t_{i-1}}X_{s}))^{2}%
\right] dsdt\right) \\
&\leqslant &c(H,\theta ,\rho )\Delta _{n}^{H+1},
\end{eqnarray*}%
and
\begin{eqnarray*}
&&\frac{1}{T_{n}}\sum_{i\neq j=1}^{n}\left( D_{2}(i,j)+D_{3}(i,j)\right) \\
&=&\frac{2}{T_{n}}\sum_{i\neq j=1}^{n}\left(
\int_{t_{i-1}}^{t_{i}}\int_{t_{i-1}}^{t_{i}}\left[
(E(X_{t}X_{s}))^{2}-(E(X_{t_{i-1}}X_{s}))^{2}-(E(X_{t_{i-1}}X_{t}))^{2}+(E(X_{t_{i-1}}X_{t_{j-1}}))^{2}%
\right] dsdt\right) \\
&\leqslant &\frac{c(H,\theta ,\rho )}{T_{n}}\sum_{i<j=1}^{n}\Delta
_{n}^{4H-2}|j-i-1|^{4H-4} \\
&\leqslant &c(H,\theta ,\rho )\Delta _{n}^{4H-3}\sum_{j=1}^{n}j^{4H-4}.
\end{eqnarray*}%
Thus (\ref{cv Q(X)-S}) is obtained. Now, using (\ref{hypercontractivity}),
Lemma \ref{pathwise convergence} and (\ref{convergence S_T}) we will be able
to assert the convergence (\ref{convergence Q(X)}), and thus the entire
lemma, as soon as we can show that the right-hand side of (\ref{cv Q(X)-S})
divided by $T_{n}$ converges to $0$ as fast as some negative power of $n$.
Thus we only need to show that there exists $\varepsilon >0$, such that as $%
n\rightarrow \infty $%
\begin{equation*}
q_{n}:=\min \left( \Delta _{n}^{2H},\frac{1}{\left( n\Delta _{n}\right) ^{2}}%
+\frac{\Delta _{n}^{H}}{n}+\frac{1}{n}\Delta
_{n}^{4H-4}\sum_{j=1}^{n}j^{4H-4}\right) \leqslant n^{-\varepsilon }.
\end{equation*}

Let us concentrate first on the second part of the minimum defining $q_{n}$.
This is the sum of the three terms $\left( n\Delta _{n}\right) ^{-2}$, $%
\Delta _{n}^{H}/n$, and $n^{-1}\Delta _{n}^{4H-4}\sum_{j=1}^{n}j^{4H-4}$.
The first of these three terms will tend to $0$ like a negative power of $n$
as soon as there exists $\varepsilon _{1}>0$ such that $\Delta _{n}\geq
n^{-1+\varepsilon _{1}}.$ The second term will tend to $0$ like a negative
power of $n$ as soon as there exists $\varepsilon _{2}>0$ such that $\Delta
_{n}\leqslant n^{1/H-\varepsilon _{2}}$. For the third term, we must
separate the case $H<3/4$ from the case $H\geq 3/4$. When $H<3/4$, the
series $\sum_{j=1}^{n}j^{4H-4}$ is bounded, so the last term in the second
part of the min in $q_{n}$ will tend to $0$ like a negative power of $n$ as
soon as there exists $\varepsilon _{3}>0$ such that $\Delta _{n}\geq
n^{-1/(4-4H)+\varepsilon _{3}}$. When \thinspace $H>3/4$, the series is
bounded above by a constant times $n^{4H-3}$, yielding a contribution of $%
\left( \Delta _{n}/n\right) ^{4H-4}$; so the last term in the second part of
the min in $q_{n}$ will tend to $0$ like a negative power of $n$ as soon as
there exists $\varepsilon _{4}>0$ such that $\Delta _{n}\geq
n^{-1+\varepsilon _{4}}$. The case $H=3/4$ is done in the same fashion, with
the same conclusion as when $H>3/4$. Thus we have proved that for each fixed
$n$, if there exist $\varepsilon _{1},\varepsilon _{2},\varepsilon _{3}>0$
such that
\begin{equation}
\max \left( n^{-1+\varepsilon _{1}},n^{-\frac{1}{4-4H}+\varepsilon
_{3}}\right) \leqslant \Delta _{n}\leqslant n^{1/H-\varepsilon _{2}}
\label{Deltan first condition}
\end{equation}%
then for some $\varepsilon >0$,%
\begin{equation*}
q_{n}\leqslant n^{-\varepsilon }.
\end{equation*}
On the other hand notice that for every $H\in \left( 1/2,1\right) $, there
exist $\varepsilon _{1},\varepsilon _{3}>0$ such that
\begin{equation}
\max \left( n^{-1+\varepsilon _{1}},n^{-\frac{1}{4-4H}+\varepsilon
_{3}}\right) \leqslant n^{-1/2}.  \label{dicho}
\end{equation}%
Thus for each fixed $n$, if we have
\begin{equation}
\Delta _{n}\leqslant n^{-1/2},  \label{Deltan second condition}
\end{equation}%
using the first part of the $\min $ in the definition of $q_{n}$, we get%
\begin{equation*}
q_{n}\leqslant n^{-\varepsilon }
\end{equation*}%
with $\varepsilon =H$ . To conclude, by (\ref{dicho}), for each fixed $n$,
we are either in the case (\ref{Deltan first condition}) or (\ref{Deltan
second condition}), so that $q_{n}\leqslant n^{-\varepsilon }$ in all cases
as soon as $\Delta _{n}\leqslant n^{1/H-\varepsilon _{2}}$ for some $%
\varepsilon _{2}>0$. The proof of the lemma is complete.
\end{proof}

\begin{lemma}
\label{cv Q(Sigma)} Define $\delta _{n}(\Sigma ):=\sqrt{T_{n}}\left(
Q_{n}(\Sigma )-\frac{1}{T_{n}}\int_{0}^{T_{n}}\Sigma _{t}^{2}dt\right) $.
Then
\begin{equation}
E\left[ \delta _{n}^{2}(\Sigma )\right] \leqslant c(H,\theta ,\rho )\min
\left( n\Delta _{n}^{2H+1},\frac{1}{n\Delta _{n}}+\Delta _{n}^{H+1}+\Delta
_{n}^{4H-3}\sum_{j=1}^{n}j^{4H-4}\right).  \label{bound Q(Sigma)-intSigma}
\end{equation}%
In particular, if $\Delta _{n}\leqslant n^{\alpha }\ $for some $\alpha \in
(-\infty ,1/H)$, then
\begin{equation}
Q_{n}(\Sigma )\longrightarrow \eta ^{\Sigma }  \label{convergence Q(Sigma)}
\end{equation}%
almost surely as $n\rightarrow \infty $.\newline
On the other hand if $n^{1+\alpha }\Delta _{n}^{H+1}\rightarrow 0$ for some $%
\alpha >0$,
\begin{equation}
\left\vert Q_{n}(\widehat{\Sigma })-Q_{n}(\Sigma )\right\vert
\longrightarrow 0  \label{convergence Q(SigmaHat)}
\end{equation}%
almost surely as $n\rightarrow \infty $, where
\begin{equation*}
\widehat{\Sigma }_{t_{i}}^{n}=\Delta _{n}\sum_{j=1}^{i}X_{t_{j-1}}.
\end{equation*}%
In addition, if $n^{3}\Delta _{n}^{2H+3}\rightarrow 0$,
\begin{equation}
\sqrt{T_{n}}\left\vert Q_{n}(\widehat{\Sigma })-Q_{n}(\Sigma )\right\vert
\longrightarrow 0  \label{convergence Q(Sigma)-Q(SigmaHat)}
\end{equation}%
in $L^{2}(\Omega )$ as $n\rightarrow \infty $.
\end{lemma}

\begin{proof}
By using same arguments as in the proof of Lemma \ref{cv Q(X)}, (\ref%
{representationSigma}) and (\ref{convergence Sigma_T}), we obtain  (\ref%
{bound Q(Sigma)-intSigma}) and (\ref{convergence Q(Sigma)}).\newline
Now, we prove the convergence (\ref{convergence Q(SigmaHat)}). We can write
\begin{eqnarray*}
Q_{n}({\Sigma })-Q_{n}(\widehat{\Sigma }) &=&\frac{-1}{n}\sum_{i=1}^{n}%
\left( {\Sigma }_{t_{i-1}}-\widehat{\Sigma }_{t_{i-1}}\right) ^{2}+\frac{2}{n%
}\sum_{i=1}^{n}{\Sigma }_{t_{i-1}}\left( {\Sigma }_{t_{i-1}}-\widehat{\Sigma
}_{t_{i-1}}\right).
\end{eqnarray*}%
Using the point 6) of Lemma \ref{1mainInequalities}
\begin{eqnarray*}
E\left( \left( {\Sigma }_{t_{i-1}}-\widehat{\Sigma }_{t_{i-1}}\right)
^{2}\right)
&=&\sum_{j=1}^{i-1}\sum_{k=1}^{i-1}\int_{t_{j-1}}^{t_{j}}%
\int_{t_{k-1}}^{t_{k}}E[(X_{s}-X_{t_{j-1}})(X_{r}-X_{t_{k-1}})]drds \\
&\leqslant&c(H,\theta,\rho)\left(\sum_{j=1}^{i-1}\int_{t_{j-1}}^{t_{j}}%
\left|s-t_{j-1}\right|^Hds\right)^2 \\
&\leqslant&c(H,\theta,\rho)\left(n\Delta_n^{H+1}\right)^2
\end{eqnarray*}%
Then, by Hölder inequality and the point 3) of Lemma \ref{1mainInequalities}
we obtain for every $p\geq1$
\begin{equation*}
\left(E\left[\left\vert Q_{n}({\Sigma })-Q_{n}(\widehat{\Sigma })\right\vert
^{p}\right]\right)^{1/p} \leqslant c(H,\theta,\rho) \left[n^2\Delta
_{n}^{2H+2}+n\Delta _{n}^{H+1}\right].
\end{equation*}%
Thus, by (\ref{hypercontractivity}), Lemma \ref{pathwise convergence} and
that fact that $n^{1+\alpha}\Delta_n^{H+1}\rightarrow0$ for some $\alpha>0$
the convergence (\ref{convergence Q(SigmaHat)}) is obtained. \newline
Furthermore, it is also easy to see that the convergence (\ref{convergence
Q(Sigma)-Q(SigmaHat)}) is satisfied.
\end{proof}

\subsection{Auxiliary estimators $\tilde{\protect\theta}$ and $\tilde{%
\protect\rho}$}

The first step in constructing a discrete-observation-based estimator for
which the asymptotics of $\left( \widehat{\theta }_{T},\widehat{\rho }%
_{T}\right) $ studied in Section 3 can be helpful, is to consider the
following two auxiliary estimators $\widetilde{\theta }_{n}$ and $\widetilde{%
\rho }_{n}$ of $\theta ^{\ast }$ and $\rho ^{\ast }$ respectively, by
leaving the numerators in $\left( \widehat{\theta }_{T},\widehat{\rho }%
_{T}\right) $ alone, and discretizing the denominators:%
\begin{equation*}
\widetilde{\theta }_{n}=-\frac{\frac{1}{T_{n}}\int_{0}^{T_{n}}X_{t}\delta
X_{t}}{Q_{n}(X)}
\end{equation*}%
and
\begin{equation*}
\widetilde{\rho }_{n}(\Sigma )=-\frac{\frac{1}{T_{n}}\int_{0}^{T_{n}}%
\widehat{V}_{t}^{n}\delta \widehat{V}_{t}^{n}}{Q_{n}(X)+(\widetilde{\theta }%
_{n})^{2}Q_{n}(\Sigma )},
\end{equation*}%
where
\begin{equation}
\widehat{V}_{t}^{n}=X_{t}+\widehat{\theta }_{n}\Sigma _{t},\quad 0\leqslant
t\leqslant T_{n}.  \label{expression V^n}
\end{equation}%
and we recall that $Q_{n}(Z)$ is a notation for the Riemann-sum rectangle
approximation $\frac{1}{n}\sum_{i=1}^{n}(Z_{t_{i-1}})^{2}$. We also consider
the version of $\widetilde{\rho }_{n}(\Sigma )$ based only on discrete
observations of $\Sigma $:%
\begin{equation*}
\widetilde{\rho }_{n}(\widehat{\Sigma })=-\frac{\frac{1}{T_{n}}%
\int_{0}^{T_{n}}\widehat{V}_{t}^{n}\delta \widehat{V}_{t}^{n}}{Q_{n}(X)+(%
\widetilde{\theta }_{n})^{2}Q_{n}(\widehat{\Sigma })}.
\end{equation*}

Combining Lemma \ref{convergence numerator theta} and the almost-sure
convergence (\ref{convergence Q(X)}) we deduce the strong consistency of $%
\widetilde{\theta }_{n} $.

\begin{theorem}
\label{strong consistency Theta discrete}Assume $H\in (1/2,1)$. If $\Delta
_{n}\leqslant n^{\alpha }\ $for some $\alpha \in (-\infty ,1/H)$, then
\begin{equation*}
\widetilde{\theta }_{n}\longrightarrow \theta ^{\ast }
\end{equation*}%
almost surely as $n\rightarrow \infty $.
\end{theorem}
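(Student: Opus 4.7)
The plan is to exploit the ratio structure of $\widetilde{\theta}_n$ and deduce the result by dividing two independently established almost sure limits. Concretely, the estimator is
\begin{equation*}
\widetilde{\theta}_n = -\frac{\frac{1}{T_n}\int_0^{T_n} X_t\,\delta X_t}{Q_n(X)},
\end{equation*}
so it suffices to identify the almost sure limits of the numerator and denominator separately, check that the denominator limit is nonzero, and then apply the standard continuous mapping / Slutsky-type argument for almost sure convergence of quotients.

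First, I would handle the numerator. Since the condition $\Delta_n \leq n^{\alpha}$ with $\alpha \in (-\infty, 1/H)$ is compatible with the standing assumption $T_n = n\Delta_n \to \infty$, Lemma \ref{convergence numerator theta} (applied along the deterministic sequence $T = T_n$) yields
\begin{equation*}
\frac{1}{T_n}\int_0^{T_n} X_t\,\delta X_t \longrightarrow -(\theta+\rho)\,\eta^X
\end{equation*}
almost surely as $n\to\infty$. For the denominator, the convergence (\ref{convergence Q(X)}) established in Lemma \ref{cv Q(X)} gives, under precisely the hypothesis $\Delta_n \leq n^{\alpha}$ with $\alpha < 1/H$,
\begin{equation*}
Q_n(X) \longrightarrow \eta^X
\end{equation*}
almost surely.

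To take the ratio, I need $\eta^X \neq 0$. The explicit formula $\eta^X = \frac{H\Gamma(2H)}{\rho^2-\theta^2}\bigl[\rho^{2-2H}-\theta^{2-2H}\bigr]$ from Lemma \ref{lemme cv ps} is strictly positive since $\theta \neq \rho$ (the function $x\mapsto x^{2-2H}$ is monotone and the numerator and denominator share the same sign). Consequently, combining the two almost sure convergences above on an event of full measure,
\begin{equation*}
\widetilde{\theta}_n = -\frac{\frac{1}{T_n}\int_0^{T_n} X_t\,\delta X_t}{Q_n(X)} \longrightarrow -\frac{-(\theta+\rho)\,\eta^X}{\eta^X} = \theta+\rho = \theta^*,
\end{equation*}
which is the desired conclusion.

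There is essentially no hard step here: the work has already been absorbed into Lemmas \ref{convergence numerator theta} and \ref{cv Q(X)}. The only point to remain attentive to is that the restriction $\alpha < 1/H$ in the hypothesis is dictated entirely by the denominator's convergence (Lemma \ref{cv Q(X)}), while the numerator's convergence (a purely continuous-time statement) imposes no additional discretization constraint beyond $T_n\to\infty$. In particular, the theorem holds over the full range $H\in(1/2,1)$, with no need for the stricter condition $H<3/4$ that will surface only when proving asymptotic normality.
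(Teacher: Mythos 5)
Your proof is correct and follows essentially the same route as the paper, which likewise deduces the result by combining Lemma \ref{convergence numerator theta} for the numerator with the almost sure convergence (\ref{convergence Q(X)}) of Lemma \ref{cv Q(X)} for the denominator. Your explicit check that $\eta^{X}>0$ (so the quotient is well defined in the limit) is a detail the paper leaves implicit, but it is the right thing to verify.
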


By Lemmas \ref{convergence numerator of rho} and \ref{cv Q(Sigma)} it is
easy also to deduce the strong consistency of $\widetilde{\rho }_{n}(\Sigma
) $ and $\widetilde{\rho }_{n}(\widehat{\Sigma })$.

\begin{theorem}
\label{strong consistency rho Discrete}Assume $H\in (1/2,1)$. If $\Delta
_{n}\leqslant n^{\alpha }\ $for some $\alpha \in (-\infty ,1/H)$, then%
\begin{equation*}
\widetilde{\rho }_{n}(\Sigma )\longrightarrow \rho ^{\ast }
\end{equation*}%
almost surely as $n\rightarrow \infty $.\newline
In addition, if $n^{1+\alpha }\Delta _{n}^{H+1}\rightarrow 0$ for some $%
\alpha >0$,
\begin{equation*}
\widetilde{\rho }_{n}(\widehat{\Sigma })\longrightarrow \rho ^{\ast }
\end{equation*}%
almost surely as $n\rightarrow \infty $.
\end{theorem}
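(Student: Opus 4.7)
The plan is to treat the numerator and denominator of $\widetilde{\rho}_n(\Sigma)$ separately, relating each to its continuous-observation counterpart from Section 3, and then to derive the second convergence as a perturbation of the first. For the denominator $Q_n(X) + (\widetilde{\theta}_n)^2 Q_n(\Sigma)$, Theorem \ref{strong consistency Theta discrete} gives $\widetilde{\theta}_n \to \theta^* = \theta + \rho$ almost surely, while Lemma \ref{cv Q(X)} and Lemma \ref{cv Q(Sigma)} (the convergences \eqref{convergence Q(X)} and \eqref{convergence Q(Sigma)}, which apply under our hypothesis $\Delta_n \leq n^{\alpha}$ for some $\alpha < 1/H$) give $Q_n(X) \to \eta^X$ and $Q_n(\Sigma) \to \eta^\Sigma$ almost surely. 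Hence the denominator tends to $\eta^X + (\theta^*)^2 \eta^\Sigma$ almost surely, which by Lemma \ref{convergence L_T hate} is exactly the limit of $\widehat{L}_{T_n}/T_n$.

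For the numerator $\frac{1}{T_n}\int_0^{T_n} \widehat{V}_t^n \delta \widehat{V}_t^n$, the object is structurally the same as in Lemma \ref{convergence numerator of rho}, but with $\widehat{\theta}_{T_n}$ replaced by $\widetilde{\theta}_n$. I would redo that proof verbatim, substituting $\widetilde{\theta}_n$ for $\widehat{\theta}_{T_n}$ throughout. The explicit formula $D_s X_t^m = e^{-m(t-s)}\mathbf{1}_{[0,t]}(s)$ is deterministic, so the Malliavin derivative of the iterated-integral correction term in $\int_0^{T_n} X_t\,\delta X_t$ vanishes, and one gets
$$D_s \widetilde{\theta}_n = -\frac{X_{T_n}D_s X_{T_n}/T_n}{Q_n(X)} - \widetilde{\theta}_n \, \frac{D_s Q_n(X)}{Q_n(X)},$$
the only formal difference with $D_s \widehat{\theta}_{T_n}$ being that $S_{T_n}/T_n$ is replaced by $Q_n(X)$ and $D_s S_{T_n}$ by $D_s Q_n(X) = \frac{2}{n}\sum_{i=1}^n X_{t_{i-1}} D_s X_{t_{i-1}}$. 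This generates three terms $J_{1,n}, J_{2,n}, J_{3,n}$ exactly analogous to $J_{1,T}, J_{2,T}, J_{3,T}$ in the proof of Lemma \ref{convergence numerator of rho}. Using point 5 of Lemma \ref{1mainInequalities} together with representation \eqref{representationSigma}, one checks that $|J_{1,n}|$ and $|J_{2,n}|$ are negligible when divided by any positive power of $T_n$, while $J_{3,n}/T_n$ converges almost surely to the same explicit constant as $J_{3,T}/T$. Combining with the boundary terms involving $X_{T_n}^2, V_{T_n}^2, \widehat{V}_{T_n}^2$ (negligible again by point 5 of Lemma \ref{1mainInequalities}) and with Lemma \ref{convergence numerator theta}, this yields $\frac{1}{T_n}\int_0^{T_n} \widehat{V}_t^n \delta \widehat{V}_t^n \to -\rho\theta(\theta+\rho)\eta^\Sigma$ almost surely. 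Taking the ratio with the denominator established above gives $\widetilde{\rho}_n(\Sigma) \to \rho^*$.

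For the second statement, observe that $\widetilde{\rho}_n(\widehat{\Sigma})$ differs from $\widetilde{\rho}_n(\Sigma)$ only through the replacement of $Q_n(\Sigma)$ by $Q_n(\widehat{\Sigma})$ in the denominator. Under the stronger hypothesis $n^{1+\alpha}\Delta_n^{H+1} \to 0$ for some $\alpha > 0$, Lemma \ref{cv Q(Sigma)} yields $|Q_n(\widehat{\Sigma}) - Q_n(\Sigma)| \to 0$ almost surely. Since $\widetilde{\theta}_n$ is almost surely convergent, $(\widetilde{\theta}_n)^2$ is almost surely bounded, so the two denominators have the same almost-sure limit and $\widetilde{\rho}_n(\widehat{\Sigma})$ inherits the almost-sure limit $\rho^*$ from $\widetilde{\rho}_n(\Sigma)$.

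The main obstacle is the verbatim repetition of the Lemma \ref{convergence numerator of rho} computation with $\widetilde{\theta}_n$ in place of $\widehat{\theta}_{T_n}$, especially the bookkeeping for $J_{2,n}$, where the integral $D_s S_{T_n} = 2\int_s^{T_n} X_u D_s X_u\,du$ is replaced by the discrete sum $D_s Q_n(X)$. Pulling out the crude uniform bound $|X_{t_{i-1}}| \leq \sup_{t\in[0,T_n]}|X_t| = o(T_n^\varepsilon)$ almost surely for any $\varepsilon>0$ reduces matters to the same kernel estimate used in the continuous case, so the same decay rate is obtained.
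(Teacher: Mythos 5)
Your proposal is correct and its overall architecture is the paper's: treat numerator and denominator separately, send the denominator $Q_{n}(X)+(\widetilde{\theta }_{n})^{2}Q_{n}(\Sigma )$ to $\eta ^{X}+(\theta ^{\ast })^{2}\eta ^{\Sigma }$ via Lemmas \ref{cv Q(X)} and \ref{cv Q(Sigma)} and Theorem \ref{strong consistency Theta discrete}, and handle $\widetilde{\rho }_{n}(\widehat{\Sigma })$ as a perturbation through the second part of Lemma \ref{cv Q(Sigma)} (note, as you implicitly do, that $n^{1+\alpha }\Delta _{n}^{H+1}\rightarrow 0$ subsumes the hypothesis of the first part). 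The one place you diverge is the numerator. In the paper's definition of $\widehat{V}_{t}^{n}=X_{t}+\widehat{\theta }_{n}\Sigma _{t}$ the coefficient is the \emph{continuous-data} estimator $\widehat{\theta }_{T_{n}}$ (hat, not tilde), so the numerator $T_{n}^{-1}\int_{0}^{T_{n}}\widehat{V}_{t}^{n}\delta \widehat{V}_{t}^{n}$ is literally the quantity of Lemma \ref{convergence numerator of rho} evaluated at $T=T_{n}$, and its almost sure limit $-\rho \theta (\rho +\theta )\eta ^{\Sigma }$ follows by direct citation; the paper's entire proof is that citation. Your verbatim re-run of Lemma \ref{convergence numerator of rho} with $\widetilde{\theta }_{n}$ in place of $\widehat{\theta }_{T_{n}}$ is therefore unnecessary work, though it is a defensible reading of the (admittedly ambiguous) notation and does lead to the same limit. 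If you do insist on that route, one estimate needs more care than you give it: in your analogue of $J_{2,T}$ the kernel $D_{s}S_{T_{n}}$, concentrated near the single endpoint $T_{n}$, is replaced by $D_{s}Q_{n}(X)=\frac{2}{n}\sum_{i}X_{t_{i-1}}D_{s}X_{t_{i-1}}$, a sum of exponential kernels centered at the interior points $t_{i-1}$. After pulling out $\sup |X|\sup |\Sigma |$, the remaining deterministic double integral is no longer $O(1)$ as in the continuous case but only $O(T_{n}^{2H-1})$ (the terms with $t>t_{i-1}$ contribute), so your claim that $|J_{2,n}|/T_{n}^{\varepsilon }\rightarrow 0$ for \emph{every} $\varepsilon >0$ is not what the crude bound delivers. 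What it does deliver is $|J_{2,n}|=o(T_{n})$, which is all that strong consistency requires, so your conclusion stands.
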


To establish the asymptotic normality of $\left( \widetilde{\theta }_{n},%
\widetilde{\rho }_{n}\right) $, we can write
\begin{equation*}
\sqrt{T_{n}}\left( \widetilde{\theta }_{n}-\theta ^{\ast }\right) =\frac{%
\frac{S_{T_{n}}}{T_{n}}}{Q_{n}}\sqrt{T_{n}}(\widehat{\theta }_{T_{n}}-\theta
^{\ast })+\frac{\theta ^{\ast }\sqrt{T_{n}}(\frac{S_{T_{n}}}{T_{n}}-{Q_{n}(X)%
})}{Q_{n}(X)}.
\end{equation*}%
\newline
Similarly,
\begin{equation*}
\sqrt{T_{n}}\left( \widetilde{\rho }_{n}-\rho ^{\ast }\right) =\frac{\frac{%
\widehat{L}_{T_{n}}}{T_{n}}}{\widehat{Q}_{n}}\sqrt{T_{n}}(\widehat{\rho }%
_{T_{n}}-\rho ^{\ast })+\frac{\rho ^{\ast }\sqrt{T_{n}}(\frac{\widehat{L}%
_{T_{n}}}{T_{n}}-{\widehat{Q}_{n}})}{\widehat{Q}_{n}}.
\end{equation*}

Theorem \ref{convergence in law theorem} provides the convergence of the
last summands in each of the two lines above. Combining this with the
convergences we obtained in Lemmas \ref{cv Q(X)} and \ref{cv Q(Sigma)}, we
obtain the following result.

\begin{theorem}
\label{convergence in law theorem discrete}Let $H\in (\frac{1}{2},\frac{3}{4}%
)$ and $n\Delta_n^{H+1}\rightarrow0$. Then
\begin{equation*}
\sqrt{T_{n}}\left( \widetilde{\theta }_{n}-\theta ^{\ast },\widetilde{\rho }%
_{n}({\Sigma })-\rho ^{\ast }\right) \overset{law}{\longrightarrow }\mathcal{%
N}\left(0, ^tP\ \Gamma\ P\right)
\end{equation*}%
In addition if $n^{3}\Delta_n^{2H+3}\rightarrow0$,
\begin{equation*}
\sqrt{T_{n}}\left( \widetilde{\theta }_{n}-\theta ^{\ast },\widetilde{\rho }%
_{n}(\widehat{\Sigma })-\rho ^{\ast }\right) \overset{law}{\longrightarrow }%
\mathcal{N}\left(0, ^tP\ \Gamma\ P\right)
\end{equation*}%
where $P $ the matrix defined in (\ref{matrix Sigma}).
\end{theorem}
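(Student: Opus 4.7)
The plan is to use the two explicit decompositions displayed just above the theorem, which express $\sqrt{T_n}(\widetilde{\theta}_n-\theta^{\ast})$ and $\sqrt{T_n}(\widetilde{\rho}_n-\rho^{\ast})$ as the continuous-time quantities $\sqrt{T_n}(\widehat{\theta}_{T_n}-\theta^{\ast})$ and $\sqrt{T_n}(\widehat{\rho}_{T_n}-\rho^{\ast})$ multiplied by ratios of denominators, plus additive remainders built from $\delta_{n}(X)$ and related quantities. Theorem \ref{convergence in law theorem} already delivers the joint convergence in law of the continuous-time pair to $\mathcal{N}(0,{}^tP\Gamma P)$, so the entire task reduces to showing that (i) the matrix of ratios converges almost surely to the identity, and (ii) each remainder tends to $0$ in probability under the stated hypotheses on $\Delta_n$. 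An application of Slutsky's lemma in $\mathbb{R}^2$ then concludes.

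For the $\widetilde{\theta}_n$-coordinate, the remainder is $\theta^{\ast}\sqrt{T_n}(S_{T_n}/T_n-Q_n(X))/Q_n(X) = -\theta^{\ast}\delta_{n}(X)/Q_n(X)$. By Lemma \ref{cv Q(X)}, $\mathbb{E}[\delta_n^{2}(X)]\leqslant c(H,\theta,\rho)\,n\Delta_n^{2H+1}$, and since $n\Delta_n^{H+1}\to 0$ forces $\Delta_n\to 0$ (because $T_n=n\Delta_n\to\infty$ bounds $\Delta_n$ by $n$, giving $\Delta_n = o(n^{-1/(H+1)})$), we have $n\Delta_n^{2H+1}=\Delta_n^{H}\cdot n\Delta_n^{H+1}\to 0$, so $\delta_n(X)\to 0$ in $L^2$ and in probability. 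Combined with the a.s.\ convergence $Q_n(X)\to\eta^{X}$ from the same lemma and $S_{T_n}/T_n\to\eta^{X}$ from (\ref{convergence S_T}), the ratio prefactor tends a.s.\ to $1$, and Slutsky finishes the first coordinate.

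For $\widetilde{\rho}_n(\Sigma)$, I would expand $\widehat{L}_{T_n}/T_n=S_{T_n}/T_n+2\widehat{\theta}_{T_n}\Sigma_{T_n}^{2}/(2T_n)+\widehat{\theta}_{T_n}^{2}\cdot T_n^{-1}\int_0^{T_n}\Sigma_t^{2}dt$, subtract $\widehat{Q}_n := Q_n(X)+(\widetilde{\theta}_n)^{2}Q_n(\Sigma)$, and multiply by $\sqrt{T_n}$. This produces four summands: $-\delta_n(X)$ (handled as above); $\widehat{\theta}_{T_n}\Sigma_{T_n}^{2}/\sqrt{T_n}$, which tends to $0$ a.s.\ since $\Sigma_{T_n}=(X_{T_n}^{\theta}-X_{T_n}^{\rho})/(\rho-\theta)$ is polynomially negligible by point 5) of Lemma \ref{1mainInequalities}; $-\widehat{\theta}_{T_n}^{2}\,\delta_n(\Sigma)$, which tends to $0$ in $L^2$ by Lemma \ref{cv Q(Sigma)} under the same hypothesis on $\Delta_n$; and $\sqrt{T_n}[\widehat{\theta}_{T_n}^{2}-\widetilde{\theta}_n^{2}]Q_n(\Sigma)$. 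For the last one, factoring gives $\sqrt{T_n}(\widehat{\theta}_{T_n}-\widetilde{\theta}_n)\cdot(\widehat{\theta}_{T_n}+\widetilde{\theta}_n)\cdot Q_n(\Sigma)$; using the $\widetilde{\theta}_n$-decomposition yields $\sqrt{T_n}(\widehat{\theta}_{T_n}-\widetilde{\theta}_n)=O_P(1)\cdot\delta_n(X)/(\sqrt{T_n}\,Q_n(X))+\theta^{\ast}\delta_n(X)/Q_n(X)\to 0$ in probability, so this term also vanishes. Strong consistency (Theorems \ref{strong consistency Theta discrete}--\ref{strong consistency rho Discrete}) ensures $\widehat{Q}_n\to\eta^{X}+(\theta^{\ast})^{2}\eta^{\Sigma}=\eta^{\widehat{L}}$ a.s., so the $\widetilde{\rho}_n(\Sigma)$-prefactor also tends a.s.\ to $1$.

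For the second statement about $\widetilde{\rho}_n(\widehat{\Sigma})$, I would replace $Q_n(\Sigma)$ by $Q_n(\widehat{\Sigma})$ in the denominator $\widehat{Q}_n$ and add a single extra remainder $-\widetilde{\theta}_n^{2}\sqrt{T_n}(Q_n(\widehat{\Sigma})-Q_n(\Sigma))$. By the third assertion of Lemma \ref{cv Q(Sigma)}, this tends to $0$ in $L^{2}$ precisely under $n^{3}\Delta_n^{2H+3}\to 0$, and everything else proceeds as before. Joint convergence in law is obtained by applying two-dimensional Slutsky to the vector of numerators and the diagonal matrix of prefactor-ratios. The main obstacle in carrying all this out is the book-keeping of the four summands in the $\widehat{L}_{T_n}/T_n-\widehat{Q}_n$ expansion and, in particular, verifying that $\sqrt{T_n}(\widehat{\theta}_{T_n}-\widetilde{\theta}_n)\to 0$ in probability rather than merely being $O_P(1)$, since otherwise the quadratic-difference term would prevent the remainder from vanishing.
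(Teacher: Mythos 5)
Your proposal is correct and follows essentially the same route as the paper: the paper's argument consists precisely of the two displayed decompositions above the theorem statement, followed by an appeal to Theorem \ref{convergence in law theorem} and Lemmas \ref{cv Q(X)} and \ref{cv Q(Sigma)} together with Slutsky's lemma. Your write-up in fact supplies more detail than the paper does, in particular the explicit four-term expansion of $\widehat{L}_{T_n}/T_n-\widehat{Q}_n$ and the verification that $\sqrt{T_n}(\widehat{\theta}_{T_n}-\widetilde{\theta}_n)\rightarrow 0$ in probability, both of which the paper leaves implicit.
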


\subsection{$X$ and $\Sigma $ are observed\label{XandSigma}}

The problem with the auxiliary estimators $\widetilde{\theta }_{n}$ and $%
\widetilde{\rho }_{n}$ is that they still contain Skorohod integrals. In
order to devise a further scheme that allows us to evaluate them, at least
approximately, using discrete data only, we begin by using the discrete
observations of $X$ and $\Sigma $, and recalling that, from Lemmas \ref%
{convergence numerator theta} and \ref{convergence numerator of rho}, we have%
\begin{eqnarray*}
\frac{1}{T}\int_{0}^{T}X_{t}\delta X_{t} &\longrightarrow &-\rho \theta
(\rho +\theta )\eta ^{X}, \\
\frac{1}{T}\int_{0}^{T}\widehat{V}_{t}\delta \widehat{V}_{t}
&\longrightarrow &-\rho \theta (\rho +\theta )\eta ^{\Sigma }
\end{eqnarray*}%
where $\eta ^{X}$ and $\eta ^{\Sigma }$, which are also functions of $%
H,\theta ,\rho $, are given in Lemma \ref{lemme cv ps}. Since these limits
depend on the parameters we are trying to estimate, one strategy is to
rewrite the strong consistency results of Theorems \ref{strong consistency
Theta discrete} and \ref{strong consistency rho Discrete} for $\widetilde{%
\theta }_{n}$ and $\widetilde{\rho }_{n}(\Sigma )$ as implicit definitions
of new estimators, where the numerators in the definitions of $\widetilde{%
\theta }_{n}$ and $\widetilde{\rho }_{n}(\Sigma )$ are replaced by their
limits recalled above, and each instance of $\theta $ and $\rho $ therein
are replaced by the new estimator we are trying to define. The same
substitution must be done with the expressions $\theta ^{\ast }$ and $\rho
^{\ast }$, since these are the limits of $\widetilde{\theta }_{n}$ and $%
\widetilde{\rho }_{n}$. In other words we consider only that the
denominators in $\widetilde{\theta }_{n}$ and $\widetilde{\rho }_{n}$
contain data, and replace all other instances of $\left( \theta ,\rho
\right) $ in the limits in Theorems \ref{strong consistency Theta discrete}
and \ref{strong consistency rho Discrete} by the pair of estimators we are
trying to define. After some minor manipulations, this leads to the
following definition of a new pair of estimators $\left( \check{\theta}_{n},%
\check{\rho}_{n}\right) $ as solution of the system of the following two
equations, if it exists:%
\begin{equation}
\left\{
\begin{array}{lcl}
\check{\theta}_{n}+\check{\rho}_{n}=\frac{H\Gamma (2H)[(\check{\rho}%
_{n})^{2-2H}-(\check{\theta}_{n})^{2-2H}]}{(\check{\rho}_{n}-\check{\theta}%
_{n})Q_{n}(X)} &  &  \\
\frac{\left( \check{\theta}_{n}\right) ^{2}-\left( \check{\rho}_{n}\right)
^{2}}{\left[ (\check{\theta}_{n})^{2-2H}-(\check{\rho}_{n})^{2-2H}+\left(
\check{\rho}_{n}+\check{\theta}_{n}\right) ^{2}\left( (\check{\rho}%
_{n})^{-2H}-(\check{\theta}_{n})^{-2H}\right) \right] }=\frac{H\Gamma (2H)}{%
Q_{n}(X)+(\check{\theta}_{n}+\check{\rho}_{n})^{2}Q_{n}(\Sigma )} &  &
\end{array}%
\right. .  \label{system with Sigma}
\end{equation}%
We emphasize that the above is an implicit definition of $\left( \check{%
\theta}_{n},\check{\rho}_{n}\right) $. It is also rather opaque. The system (%
\ref{system with Sigma}) can be simplified slightly using more elementary
manipulations. We find that the definition of $\left( \check{\theta}_{n},%
\check{\rho}_{n}\right) $ is equivalent to the following:%
\begin{equation*}
F\left( \check{\theta}_{n},\check{\rho}_{n}\right) =\left(
Q_{n}(X),Q_{n}(\Sigma )\right)
\end{equation*}%
where $F$ is a positive function of the variables $\left( x,y\right) $ in $%
(0,+\infty )^{2}$ defined by: for every $(x,y)\in (0,+\infty )^{2}$
\begin{equation}
F(x,y)=H\Gamma (2H)\times \left\{
\begin{array}{lcl}
\frac{1}{y^{2}-x^{2}}\left( y^{2-2H}-x^{2-2H},x^{-2H}-y^{-2H}\right) \quad %
\mbox{if }\ x\neq y &  &  \\
\left( (1-H)x^{-2H},Hx^{-2H-2}\right) \quad \mbox{if }\ x=y. &  &
\end{array}%
\right.  \label{new system with Sigma}
\end{equation}%
Interestingly, this shows that a good candidate for the discrete version of
the least-squares estimator of $\left( \theta ,\rho \right) $ is none other
than a type of generalized method of moments estimator obtained via Lemma %
\ref{lemme cv ps} after discretizing the expressions $S_{T}({X}%
):=T^{-1}\int_{0}^{T}X_{s}^{2}ds$ and $S_{T}({\Sigma }):=T^{-1}\int_{0}^{T}%
\Sigma _{s}^{2}ds$. We now consider the question whether System (\ref{system
with Sigma})  has a unique solution $\left( \check{\theta}_{n}^{2},\check{%
\rho}_{n}^{2}\right) $, and how this may imply strong consistency for these
estimators.\newline
Since for every $(x,y)\in (0,+\infty )^{2}$ with $x\neq y$
\begin{equation*}
J_{F}\left( x,y\right) =\Gamma (2H+1)%
\begin{pmatrix}
\frac{\left( 1-H\right) x^{1-2H}\left( x^{2}-y^{2}\right) -x\left(
x^{2-2H}-y^{2-2H}\right) }{\left( x^{2}-y^{2}\right) ^{2}} & \frac{\left(
1-H\right) y^{1-2H}\left( y^{2}-x^{2}\right) -y\left(
y^{2-2H}-x^{2-2H}\right) }{\left( x^{2}-y^{2}\right) ^{2}} \\
\frac{Hx^{-2H-1}\left( x^{2}-y^{2}\right) +x\left( x^{-2H}-y^{-2H}\right) }{%
\left( x^{2}-y^{2}\right) ^{2}} & \frac{Hy^{-2H-1}\left( y^{2}-x^{2}\right)
+y\left( y^{-2H}-x^{-2H}\right) }{\left( x^{2}-y^{2}\right) ^{2}}%
\end{pmatrix}%
\end{equation*}%
the determinant of $J_{F}\left( x,y\right) $ is non-zero on in $(0,+\infty
)^{2}$. So, $F$ is a diffeomorphism in $(0,+\infty )^{2}$ and its inverse $G$
has a Jacobian%
\begin{equation*}
J_{G}\left( a,b\right) =\frac{\Gamma (2H+1)}{\det J_{F}\left( x,y\right) }%
\begin{pmatrix}
\frac{Hy^{-2H-1}\left( y^{2}-x^{2}\right) +y\left( y^{-2H}-x^{-2H}\right) }{%
\left( x^{2}-y^{2}\right) ^{2}} & -\frac{\left( 1-H\right) y^{1-2H}\left(
y^{2}-x^{2}\right) -y\left( y^{2-2H}-x^{2-2H}\right) }{\left(
x^{2}-y^{2}\right) ^{2}} \\
-\frac{Hx^{-2H-1}\left( x^{2}-y^{2}\right) +x\left( x^{-2H}-y^{-2H}\right) }{%
\left( x^{2}-y^{2}\right) ^{2}} & \frac{\left( 1-H\right) x^{1-2H}\left(
x^{2}-y^{2}\right) -x\left( x^{2-2H}-y^{2-2H}\right) }{\left(
x^{2}-y^{2}\right) ^{2}}%
\end{pmatrix}%
;
\end{equation*}%
where $\left( x,y\right) =G\left( a,b\right) $.\newline
Hence, (\ref{convergence Q(X)}) and (\ref{convergence Q(Sigma)}) lead to
\begin{equation*}
\left( \check{\theta}_{n},\check{\rho}_{n}\right) =G\left(
Q_{n}(X),Q_{n}(\Sigma )\right) \longrightarrow G\left( \eta ^{X},\eta
^{\Sigma }\right) =\left( \theta ,\rho \right)
\end{equation*}%
almost surely as $n\rightarrow \infty $ as soon as $\Delta _{n}\leqslant
n^{\alpha }\ $for some $\alpha \in (-\infty ,1/H)$. Summarizing, we have
proved the following.

\begin{theorem}
Let $H\in (1/2,1)$ and assume that $\Delta _{n}\leqslant n^{\alpha }\ $for
some $\alpha \in (-\infty ,1/H)$. Then, as $n\longrightarrow \infty $
\begin{equation*}
\left( \check{\theta}_{n},\check{\rho}_{n}\right) \longrightarrow (\theta
,\rho )
\end{equation*}%
almost surely.
\end{theorem}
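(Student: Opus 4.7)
My plan is to derive the almost-sure convergence $(\check\theta_n, \check\rho_n) \to (\theta,\rho)$ from the almost-sure convergences $Q_n(X) \to \eta^X$ and $Q_n(\Sigma) \to \eta^\Sigma$ established in Lemmas \ref{cv Q(X)} and \ref{cv Q(Sigma)} (which hold under the hypothesis $\Delta_n \leqslant n^{\alpha}$, $\alpha < 1/H$), by inverting the map $F$ of (\ref{new system with Sigma}) locally at $(\theta,\rho)$ and using continuity of the inverse.

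The first step is to verify the identity $F(\theta,\rho) = (\eta^X, \eta^\Sigma)$. Since $\theta \neq \rho$ by standing assumption, $F(\theta,\rho)$ is given by the first branch of (\ref{new system with Sigma}), and a direct comparison with the closed-form expressions of $\eta^X$ and $\eta^\Sigma$ in Lemma \ref{lemme cv ps} yields the identity after multiplying numerator and denominator by $-1$. Thus the estimating equation $F(\check\theta_n, \check\rho_n) = (Q_n(X), Q_n(\Sigma))$ is the discrete analogue of $F(\theta,\rho) = (\eta^X, \eta^\Sigma)$, and the strategy reduces to a continuous-dependence argument.

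The second step is to establish that $F$ is a $C^1$-diffeomorphism from a neighbourhood of $(\theta,\rho)$ onto a neighbourhood of $(\eta^X, \eta^\Sigma)$. On the open set $\{(x,y) \in (0,\infty)^2 : x \neq y\}$ the map $F$ is smooth, and its Jacobian $J_F(x,y)$ is the $2\times 2$ matrix written explicitly in the excerpt. One verifies by algebraic manipulation that $\det J_F(x,y) \neq 0$ for all $x \neq y$ in $(0,\infty)^2$: after clearing the common factor $(x^2-y^2)^{-4}$, the determinant reduces to a signed combination of products of the power functions $x^{2-2H}, y^{2-2H}, x^{-2H}, y^{-2H}$ with the polynomial factors $x^2-y^2$, which does not vanish under the condition $x \neq y$ and $x,y>0$. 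The inverse function theorem then produces a continuous local inverse $G$ defined on an open neighbourhood $U$ of $F(\theta,\rho) = (\eta^X, \eta^\Sigma)$, satisfying $G(\eta^X, \eta^\Sigma) = (\theta,\rho)$.

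Finally, I would conclude as follows. By Lemmas \ref{cv Q(X)} and \ref{cv Q(Sigma)}, there is a full-probability event on which $(Q_n(X), Q_n(\Sigma)) \to (\eta^X, \eta^\Sigma)$; on this event, for $n \geqslant N(\omega)$ large enough, $(Q_n(X), Q_n(\Sigma)) \in U$, so the system (\ref{system with Sigma}) admits the unique local solution $(\check\theta_n, \check\rho_n) = G(Q_n(X), Q_n(\Sigma))$, and continuity of $G$ at $(\eta^X,\eta^\Sigma)$ yields $(\check\theta_n, \check\rho_n) \to (\theta,\rho)$ almost surely. The main obstacle in this proof is the non-vanishing of $\det J_F$; this is the only real computation, and since the statement only requires convergence at the fixed parameter point $(\theta,\rho)$, one needs non-vanishing merely at that point rather than globally on $(0,\infty)^2$ as claimed in the excerpt's surrounding discussion.
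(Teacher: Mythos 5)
Your proposal is correct and follows essentially the same route as the paper: verify $F(\theta,\rho)=(\eta^{X},\eta^{\Sigma})$ (in fact no sign adjustment is needed, since the first branch of (\ref{new system with Sigma}) evaluated at $(\theta,\rho)$ reproduces the formulas of Lemma \ref{lemme cv ps} verbatim), invert $F$, and push the almost-sure convergences $Q_{n}(X)\to\eta^{X}$ and $Q_{n}(\Sigma)\to\eta^{\Sigma}$ from Lemmas \ref{cv Q(X)} and \ref{cv Q(Sigma)} through the continuous inverse $G$. The only point of divergence is that you invoke the inverse function theorem locally at $(\theta,\rho)$, whereas the paper asserts that $\det J_{F}$ is nonvanishing everywhere so that $F$ is a global diffeomorphism of $(0,+\infty)^{2}$; your local version suffices for the convergence statement once the estimator is identified with $G(Q_{n}(X),Q_{n}(\Sigma))$, but be aware that the global injectivity is what makes the definition of $(\check{\theta}_{n},\check{\rho}_{n})$ as \emph{the} solution of the system (\ref{system with Sigma}) well-posed, so a purely local inverse by itself does not exclude spurious solutions far from $(\theta,\rho)$.
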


We may now prove a normal convergence result for $\left( \check{\theta}_{n},%
\check{\rho}_{n}\right) $ based on Theorem \ref{convergence in law theorem
discrete}. Note that the second part of Theorem \ref{convergence in law
theorem discrete} is not needed here because we rely on fully observed $%
\Sigma $ in this section.

\begin{theorem}
\label{convergence in law of check theta}Suppose that $H\in (\frac{1}{2},%
\frac{3}{4})$ and $n\Delta _{n}^{H+1}\rightarrow 0$. Then
\begin{equation*}
\sqrt{T_{n}}\left( \check{\theta}_{n}-\theta ,\check{\rho}_{n}-\rho \right)
\overset{law}{\longrightarrow }\mathcal{N}(0,^{t}M\ ^{t}P\ \Gamma \ P\ M)
\end{equation*}%
where the matrices $\Gamma $, $P$ and $M$ are defined respectively in (\ref%
{matrix Gamma}), (\ref{matrix Sigma}) and (\ref{matrix M}).
\end{theorem}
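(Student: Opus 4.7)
The plan is to apply the delta method to the diffeomorphism $G = F^{-1}$ established just before the theorem statement, combined with Theorem \ref{convergence in law theorem discrete} via an explicit linearization of the fluctuations of $(Q_n(X),Q_n(\Sigma))$ against those of $(\hat{\theta}_{T_n},\hat{\rho}_{T_n})$. Since $F$ is a $C^1$ diffeomorphism on $(0,\infty)^2$ with $F(\theta,\rho)=(\eta^X,\eta^\Sigma)$, Taylor expansion yields
\begin{equation*}
\sqrt{T_n}\bigl((\check{\theta}_n,\check{\rho}_n)-(\theta,\rho)\bigr) = J_G(\eta^X,\eta^\Sigma)\cdot\sqrt{T_n}\bigl((Q_n(X),Q_n(\Sigma))-(\eta^X,\eta^\Sigma)\bigr) + o_P(1).
\end{equation*}
Lemmas \ref{cv Q(X)} and \ref{cv Q(Sigma)} then allow us, under $n\Delta_n^{H+1}\to 0$, to replace $Q_n(X)$ and $Q_n(\Sigma)$ by their continuous-time counterparts $S_{T_n}/T_n$ and $T_n^{-1}\!\int_0^{T_n}\!\Sigma_t^2\,dt$ at the $\sqrt{T_n}$ scale.

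The key step is to linearize these continuous-time fluctuations against $(\hat{\theta}_{T_n}-\theta^*,\hat{\rho}_{T_n}-\rho^*)$. Applying the Young chain rule to $X_T^2$ via the SIDE (\ref{SIDE}), converting to Skorohod integrals using (\ref{link}), and invoking the identity $\int_0^T X_t^m\,\delta B_t^H = I_2(f_T^m)$, one expresses $\sqrt{T_n}(S_{T_n}/T_n - \eta^X)$ as an explicit scalar multiple of $\bigl(\rho I_2(f_{T_n}^\rho) - \theta I_2(f_{T_n}^\theta)\bigr)/\sqrt{T_n}$, after verifying via point 5) of Lemma \ref{1mainInequalities} that the boundary terms $X_T^2,\Sigma_T^2$ are $o_P(\sqrt{T_n})$ and that $\lambda_T^X/(2\theta^* T) - \eta^X$ is of order $1/T$. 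An analogous computation for $\int_0^T\Sigma_t^2\,dt$, based on $\Sigma_t = (X_t^\theta-X_t^\rho)/(\rho-\theta)$ together with the Young formulas for $(X_t^m)^2$ and the cross term $X_t^\theta X_t^\rho$, produces another explicit linear combination of $I_2(f_{T_n}^\theta)/\sqrt{T_n}$ and $I_2(f_{T_n}^\rho)/\sqrt{T_n}$. Comparing these with the representations of $\sqrt{T_n}(\hat{\theta}_{T_n}-\theta^*)$ and $\sqrt{T_n}(\hat{\rho}_{T_n}-\rho^*)$ derived in the proof of Theorem \ref{convergence in law theorem} yields an explicit $2\times 2$ matrix $N$ with
\begin{equation*}
\sqrt{T_n}\bigl((Q_n(X),Q_n(\Sigma))-(\eta^X,\eta^\Sigma)\bigr) = N\cdot\sqrt{T_n}\bigl((\hat{\theta}_{T_n},\hat{\rho}_{T_n})-(\theta^*,\rho^*)\bigr) + o_P(1).
\end{equation*}

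Combining the delta-method expansion with this identity, the equivalence $(\tilde{\theta}_n,\tilde{\rho}_n(\Sigma))=(\hat{\theta}_{T_n},\hat{\rho}_{T_n})+o_P(1/\sqrt{T_n})$ already established inside the proof of Theorem \ref{convergence in law theorem discrete}, Theorem \ref{convergence in law theorem discrete} itself, and Slutsky's lemma delivers the desired asymptotic normality; the matrix $M$ of (\ref{matrix M}) is then precisely the transpose of $J_G(\eta^X,\eta^\Sigma)\,N$, matching the covariance form ${}^tM\,{}^tP\,\Gamma\,P\,M$ under the row-vector convention adopted in Theorem \ref{convergence in law theorem}. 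The principal technical obstacle is the second-chaos bookkeeping in the $\int_0^T\Sigma_t^2\,dt$ expansion: one must carefully track several applications of the Young chain rule, the link-formula corrections involving $D_s X_t^m$ for both $m=\theta$ and $m=\rho$, and the mixed term $X_t^\theta X_t^\rho$, and then verify that every boundary term and deterministic-drift remainder cancels at the CLT scale so that only the canonical integrals $I_2(f_T^\theta)$ and $I_2(f_T^\rho)$ carry the limit.
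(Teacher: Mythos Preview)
Your approach is correct but follows a genuinely different route from the paper. The paper does not apply the delta method to $G$ at $(Q_n(X),Q_n(\Sigma))$; instead it introduces the auxiliary map $L(r,s,u,v)=(r/u,\,s/(u^2v)-r/u^3)$ and writes
\[
(\check{\theta}_n,\check{\rho}_n)=G\circ L\bigl(J_n^\theta(X),\,J_n^\rho(\widehat V),\,\widetilde\theta_n,\,\widetilde\rho_n(\Sigma)\bigr),
\]
where $J_n^\theta(X)$ and $J_n^\rho(\widehat V)$ are the (normalized) Skorohod-integral numerators. The key observation is that these numerators equal $(\rho+\theta)\eta^X+o(1/\sqrt{T_n})$ and $\rho\theta(\rho+\theta)\eta^\Sigma+o(1/\sqrt{T_n})$ almost surely, so a Taylor expansion of $G\circ L$ in the last two variables alone produces the matrix $M$, and Theorem \ref{convergence in law theorem discrete} applies directly to $(\widetilde\theta_n,\widetilde\rho_n(\Sigma))$.

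What this buys the paper is that it never has to compute the second-chaos representation of $\sqrt{T_n}\bigl(T_n^{-1}\!\int_0^{T_n}\Sigma_t^2\,dt-\eta^\Sigma\bigr)$ at all: the step you flag as the ``principal technical obstacle'' (the Young/Skorohod bookkeeping for the cross term $X_t^\theta X_t^\rho$ and the several drift corrections) is bypassed entirely by routing through the auxiliary estimators whose asymptotics are already in hand. Your direct delta-method argument is the more standard one and does work, but it requires precisely that extra computation. Both routes must of course produce the same limiting covariance; the identification of your ${}^t(J_G(\eta^X,\eta^\Sigma)\,N)$ with the paper's $M$ defined via $h=G\circ L((\rho+\theta)\eta^X,\rho\theta(\rho+\theta)\eta^\Sigma,\cdot,\cdot)$ is then a chain-rule identity, though you leave its verification implicit.
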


\begin{proof}
We have
\begin{equation*}
\widetilde{\theta }_{n}=-\frac{\frac{1}{T_{n}}\int_{0}^{T_{n}}X_{t}\delta
X_{t}}{Q_{n}(X)}:=\frac{J_{n}^{\theta }(X)}{Q_{n}(X)}
\end{equation*}%
and
\begin{equation*}
\widetilde{\rho }_{n}(\Sigma )=-\frac{\frac{1}{T_{n}}\int_{0}^{T_{n}}%
\widehat{V}_{t}^{n}\delta \widehat{V}_{t}^{n}}{Q_{n}(X)+(\widetilde{\theta }%
_{n})^{2}Q_{n}(\Sigma )}:=\frac{J_{n}^{\rho }(\widehat{V})}{Q_{n}(X)+(%
\widetilde{\theta }_{n})^{2}Q_{n}(\Sigma )}.
\end{equation*}%
Then, we can write
\begin{eqnarray*}
&&\sqrt{T_{n}}\left( \check{\theta}_{n}-\theta ,\check{\rho}_{n}-\rho
)\right) \\
&=&\sqrt{T_{n}}\left( G\left( \frac{J_{n}^{\theta }(X)}{\widetilde{\theta }%
_{n}},(\widetilde{\theta }_{n})^{-2}\left( \frac{J_{n}^{\rho }(\widehat{V})}{%
\widetilde{\rho }_{n}(\Sigma )}-\frac{J_{n}^{\theta }(X)}{\widetilde{\theta }%
_{n}}\right) \right) -(\theta ,\rho )\right) \\
&=&\sqrt{T_{n}}\left( GoL\left( J_{n}^{\theta }(X),J_{n}^{\rho }(\widehat{V}%
),\widetilde{\theta }_{n},\widetilde{\rho }_{n}(\Sigma )\right) -(\theta
,\rho )\right)
\end{eqnarray*}%
where $L(r,s,u,v)=\left( \frac{r}{u},\frac{s}{u^{2}v}-\frac{r}{u^{3}}\right)
$.\newline
On the other hand for any $\varepsilon \in (0,1)$
\begin{equation*}
\frac{1}{T}\int_{0}^{T}\int_{0}^{t}r^{2H-2}e^{-r}drdt=\Gamma (2H-1)+o(\frac{1%
}{T^{\varepsilon }})
\end{equation*}%
because
\begin{eqnarray*}
\frac{1}{T^{1-\varepsilon }}\int_{0}^{T}\int_{t}^{\infty }r^{2H-2}e^{-r}drdt
&\leqslant &\frac{1}{2T^{1-\varepsilon }}(1-e^{-T/2})\int_{0}^{\infty
}r^{2H-2}e^{-r/2}dr \\
&\rightarrow &0.
\end{eqnarray*}%
Combining this together with (\ref{decomposition of int_X_delta_X}) and the
point 5) of Lemma \ref{1mainInequalities} we can write
\begin{equation}
J_{n}^{\theta }(X)=(\rho +\theta )\eta ^{X}+o(\frac{1}{\sqrt{T_{n}}})
\label{estimation1}
\end{equation}%
where $o(\frac{1}{\sqrt{T_{n}}})$ denotes a random variable such that $\sqrt{%
T_{n}}o(\frac{1}{\sqrt{T_{n}}})$ converges to zero almost surely as $%
T_{n}\rightarrow \infty $.\newline
Similar argument leads to
\begin{equation}
J_{n}^{\rho }(\widehat{V})=\rho \theta (\rho +\theta )\eta ^{\Sigma }+o(%
\frac{1}{\sqrt{T_{n}}}).  \label{estimation2}
\end{equation}%
Since
\begin{equation*}
GoL\left( (\rho +\theta )\eta ^{X},\rho \theta (\rho +\theta )\eta ^{\Sigma
},\theta ^{\ast },\rho ^{\ast }\right) =(\theta ,\rho )
\end{equation*}%
we can write
\begin{eqnarray*}
&&\sqrt{T_{n}}\left( \check{\theta}_{n}-\theta ,\check{\rho}_{n}-\rho
)\right) \\
&=&\sqrt{T_{n}}\left( GoL\left( J_{n}^{\theta }(X),J_{n}^{\rho }(\widehat{V}%
),\widetilde{\theta }_{n},\widetilde{\rho }_{n}(\Sigma )\right) -GoL\left(
(\rho +\theta )\eta ^{X},\rho \theta (\rho +\theta )\eta ^{\Sigma },\theta
^{\ast },\rho ^{\ast }\right) \right) \\
&=&\sqrt{T_{n}}\left[ GoL\left( J_{n}^{\theta }(X),J_{n}^{\rho }(\widehat{V}%
),\widetilde{\theta }_{n},\widetilde{\rho }_{n}(\Sigma )\right) -GoL\left(
(\rho +\theta )\eta ^{X},\rho \theta (\rho +\theta )\eta ^{\Sigma },%
\widetilde{\theta }_{n},\widetilde{\rho }_{n}(\Sigma )\right) \right. \\
&&+\left. GoL\left( (\rho +\theta )\eta ^{X},-\rho \theta (\rho +\theta
)\eta ^{\Sigma },\widetilde{\theta }_{n},\widetilde{\rho }_{n}(\Sigma
)\right) -GoL\left( (\rho +\theta )\eta ^{X},\rho \theta (\rho +\theta )\eta
^{\Sigma },\theta ^{\ast },\rho ^{\ast }\right) \right] \\
:= &&s_{n}+r_{n}.
\end{eqnarray*}%
From (\ref{estimation1}) and (\ref{estimation2}) we obtain $%
s_{n}\longrightarrow 0$ almost surely as $n\rightarrow \infty $.\newline
On the other hand, by Taylor's formula
\begin{equation*}
r_{n}=\sqrt{T_{n}}\left( \widetilde{\theta }_{n}-\theta ^{\ast },\widetilde{%
\rho }_{n}(\Sigma )-\rho ^{\ast }\right) M+d_{n}
\end{equation*}%
where%
\begin{equation}
M=\left(
\begin{matrix}
\frac{\partial h_{1}}{\partial u}(\theta ^{\ast },\rho ^{\ast }) & \frac{%
\partial h_{2}}{\partial u}(\theta ^{\ast },\rho ^{\ast }) \\
\frac{\partial h_{1}}{\partial v}(\theta ^{\ast },\rho ^{\ast }) & \frac{%
\partial h_{2}}{\partial v}(\theta ^{\ast },\rho ^{\ast })%
\end{matrix}%
\right)  \label{matrix M}
\end{equation}%
with
\begin{eqnarray*}
h(u,v)=(h_{1},h_{2})(u,v) &=&GoL\left( (\rho +\theta )\eta ^{X},\rho \theta
(\rho +\theta )\eta ^{\Sigma },u,v\right).
\end{eqnarray*}%
We can write
\begin{eqnarray*}
h(u,v)&= &\left( {G_{1}},{G_{2}}\right) og\left( u,v\right)
\end{eqnarray*}%
where
\begin{equation*}
g(u,v)=(g_{1},g_{2})(u,v)=L\left( (\rho +\theta )\eta ^{X},\rho \theta (\rho
+\theta )\eta ^{\Sigma },u,v\right) .
\end{equation*}%
Moreover for $i=1,2$
\begin{equation*}
\frac{\partial h_{i}}{\partial u}(u,v)=\frac{\partial G_{i}}{\partial a}%
(g(u,v))\frac{\partial g_{1}}{\partial u}(u,v)+\frac{\partial G_{i}}{%
\partial b}(g(u,v))\frac{\partial g_{2}}{\partial u}(u,v)
\end{equation*}%
and
\begin{equation*}
\frac{\partial h_{i}}{\partial v}(u,v)=\frac{\partial G_{i}}{\partial a}%
(g(u,v))\frac{\partial g_{1}}{\partial v}(u,v)+\frac{\partial G_{i}}{%
\partial b}(g(u,v))\frac{\partial g_{2}}{\partial v}(u,v).
\end{equation*}%
On the other hand, $d_{n}$ converges in distribution to zero, because
\begin{equation*}
\Vert d_{n}\Vert \leqslant c(H, \theta ,\rho)\sqrt{T_{n}}\Vert (\widetilde{%
\rho }_{n}(\Sigma )-\theta ^{\ast },\widetilde{\theta }_{n}-\rho ^{\ast
})\Vert ^{2}.
\end{equation*}%
It is elementary that if for any $\omega \in \Omega $ there exists $%
n_{0}(\omega )\in \mathbb{N}$ such that $X_{n}(\omega )=Y_{n}(\omega )$ for
all $n\geq n_{0}(\omega )$ and $X_{n}\overset{law}{\longrightarrow }0$ as $%
n\rightarrow \infty $, then $Y_{n}\overset{law}{\longrightarrow }0$ as $%
n\rightarrow \infty $.\newline
Combining this with Theorem \ref{convergence in law theorem discrete} the
proof is completed.
\end{proof}

\subsection{$X$ is observed\label{Xobserved}}

In the previous section, we encountered theorems in which $X$ and $\Sigma $
are both assumed to be fully observed in discrete time. Since $\Sigma $ is
the time-antiderivative of $X$, such an assumption corresponds, for
instance, to the physical situation where $X$ is the velocity of a particle,
and $\Sigma $ is its position.

In this section, we abandon such a framework, and assume instead that only $%
X $ is observed in discrete time. Thus we consider the following pair of
estimators $\left( \breve{\theta}_{n},\breve{\rho}_{n}\right) $:%
\begin{equation*}
\left( \breve{\theta}_{n},\breve{\rho}_{n}\right) =G\left( Q_{n}(X),Q_{n}(%
\widehat{\Sigma })\right)
\end{equation*}%
where the deterministic explicit function $G$ was identified in the previous
section as the inverse of the function $F$ given in (\ref{new system with
Sigma}). Equivalently, $\left( \breve{\theta}_{n},\breve{\rho}_{n}\right) $
is the solution of the system (\ref{system with Sigma}), or its equivalent
form (\ref{new system with Sigma}), with $\Sigma $ replaced by the process $%
\widehat{\Sigma } $, which relies only on observations of $X$. Using same
arguments as in Section \ref{XandSigma} and Lemma \ref{cv Q(Sigma)}, but
relying now on the second part of Theorem \ref{strong consistency rho
Discrete} (hence the stronger condition on $\Delta _{n}$ for the strong
consistency result) and the second part of Theorem \ref{convergence in law
theorem discrete} (hence the stronger condition on $\Delta _{n}$ for the
convergence in law result), we conclude the following.

\begin{theorem}
\label{convergence in law of check theta case hat Sigma} If $n^{1+\alpha
}\Delta _{n}^{H+1}\rightarrow 0$ for some $\alpha >0$,
\begin{equation*}
\left( \breve{\theta}_{n},\breve{\rho}_{n}\right) \longrightarrow \left(
\theta ,\rho \right)
\end{equation*}%
almost surely as $n\rightarrow \infty $.
\end{theorem}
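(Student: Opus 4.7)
The plan is to view $(\breve{\theta}_n, \breve{\rho}_n)$ as the image of the data vector $(Q_n(X), Q_n(\widehat{\Sigma}))$ under the deterministic continuous inverse map $G$ of the diffeomorphism $F$ constructed in Section \ref{XandSigma}. Since $G$ is continuous on $(0,+\infty)^2$ and $G(\eta^X, \eta^\Sigma) = (\theta,\rho)$, the conclusion reduces, by the continuous mapping theorem, to establishing the almost sure convergences
\begin{equation*}
Q_n(X) \longrightarrow \eta^X \quad \text{and} \quad Q_n(\widehat{\Sigma}) \longrightarrow \eta^\Sigma
\end{equation*}
as $n \to \infty$.

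First I would verify that the standing assumption $n^{1+\alpha}\Delta_n^{H+1} \to 0$ (for some $\alpha > 0$) implies $\Delta_n \leq n^\beta$ eventually for some $\beta \in (-\infty, 1/H)$; indeed, this hypothesis forces $\Delta_n = o(n^{-(1+\alpha)/(H+1)})$ and in particular $\Delta_n \to 0$, which is well inside the range covered by Lemmas \ref{cv Q(X)} and \ref{cv Q(Sigma)}. Consequently, Lemma \ref{cv Q(X)} yields $Q_n(X) \to \eta^X$ almost surely, and the first statement of Lemma \ref{cv Q(Sigma)} gives $Q_n(\Sigma) \to \eta^\Sigma$ almost surely. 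Next, the hypothesis $n^{1+\alpha}\Delta_n^{H+1} \to 0$ is precisely the condition under which the second statement of Lemma \ref{cv Q(Sigma)} applies, giving
\begin{equation*}
\bigl| Q_n(\widehat{\Sigma}) - Q_n(\Sigma) \bigr| \longrightarrow 0
\end{equation*}
almost surely. Combining these two displays, $Q_n(\widehat{\Sigma}) \to \eta^\Sigma$ almost surely.

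Putting everything together and invoking the continuity of $G$ at the interior point $(\eta^X, \eta^\Sigma) \in (0,+\infty)^2$ established in Section \ref{XandSigma} via the non-vanishing of $\det J_F$, we conclude
\begin{equation*}
(\breve{\theta}_n, \breve{\rho}_n) = G\bigl(Q_n(X), Q_n(\widehat{\Sigma})\bigr) \longrightarrow G(\eta^X, \eta^\Sigma) = (\theta, \rho)
\end{equation*}
almost surely, as required. There is no substantive obstacle here: all the heavy lifting has been done in Lemmas \ref{cv Q(X)} and \ref{cv Q(Sigma)} and in the invertibility analysis of $F$, so the proof is essentially a routine assembly. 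The only point worth noting carefully is that the strengthening from the condition $\Delta_n \leq n^\alpha$ with $\alpha < 1/H$ (which sufficed when $\Sigma$ was observed) to the current more restrictive condition $n^{1+\alpha}\Delta_n^{H+1} \to 0$ is driven entirely by the discretization error $Q_n(\widehat{\Sigma}) - Q_n(\Sigma)$ of the Riemann-sum approximation of $\Sigma$ from the data $\{X_{t_k}\}$, as quantified in Lemma \ref{cv Q(Sigma)}.
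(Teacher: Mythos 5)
Your proposal is correct and follows essentially the same route as the paper, which proves this result by combining the diffeomorphism argument of Section \ref{XandSigma} (so that $(\breve{\theta}_n,\breve{\rho}_n)=G(Q_n(X),Q_n(\widehat{\Sigma}))$ with $G$ continuous and $G(\eta^X,\eta^\Sigma)=(\theta,\rho)$) with the almost-sure convergences of Lemmas \ref{cv Q(X)} and \ref{cv Q(Sigma)}, the stronger hypothesis $n^{1+\alpha}\Delta_n^{H+1}\rightarrow 0$ being needed exactly where you say, namely to control $Q_n(\widehat{\Sigma})-Q_n(\Sigma)$. Your preliminary check that this hypothesis forces $\Delta_n\rightarrow 0$, hence places $\Delta_n$ within the range required by the first parts of those lemmas, is a detail the paper leaves implicit but is correct.
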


\begin{theorem}
\label{convergence in law of check theta case hat Sigma}Let $H\in (\frac{1}{2%
},\frac{3}{4})$. If $n^{3}\Delta _{n}^{2H+3}\rightarrow 0$, then, as $%
n\rightarrow \infty $
\begin{equation*}
\sqrt{T_{n}}\left( \breve{\theta}_{n}-\theta ,\breve{\rho}_{n}-\rho \right)
\overset{law}{\longrightarrow }\mathcal{N}(0,^{t}M\ ^{t}P\ \Gamma \ P\ M)
\end{equation*}%
where the matrices $\Gamma $, $P$ and $M$ are defined respectively in (\ref%
{matrix Gamma}), (\ref{matrix Sigma}) and (\ref{matrix M}).
\end{theorem}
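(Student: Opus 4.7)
The plan is to reduce this result to the previously established Theorem~\ref{convergence in law of check theta} for $\left( \check{\theta}_{n},\check{\rho}_{n}\right) $ (the case where $\Sigma $ is observed), by treating $\left( \breve{\theta}_{n},\breve{\rho}_{n}\right) $ as a negligible perturbation of $\left( \check{\theta}_{n},\check{\rho}_{n}\right) $.

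First, observe that the assumption $n^{3}\Delta _{n}^{2H+3}\rightarrow 0$ implies the weaker condition $n\Delta _{n}^{H+1}\rightarrow 0$ required by Theorem~\ref{convergence in law of check theta}; indeed, $\left( n\Delta _{n}^{H+1}\right) ^{3}=n^{3}\Delta _{n}^{3H+3}\leq n^{3}\Delta _{n}^{2H+3}$ eventually, since $\Delta _{n}\to 0$ and $H>0$. Therefore Theorem~\ref{convergence in law of check theta} yields
\begin{equation*}
\sqrt{T_{n}}\left( \check{\theta}_{n}-\theta ,\check{\rho}_{n}-\rho \right) \overset{law}{\longrightarrow }\mathcal{N}(0,{}^{t}M\ ^{t}P\ \Gamma \ P\ M).
\end{equation*}

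Next, I would write the decomposition
\begin{equation*}
\sqrt{T_{n}}\left( \breve{\theta}_{n}-\theta ,\breve{\rho}_{n}-\rho \right) =\sqrt{T_{n}}\left( \check{\theta}_{n}-\theta ,\check{\rho}_{n}-\rho \right) +\sqrt{T_{n}}\left[ G\!\left( Q_{n}(X),Q_{n}(\widehat{\Sigma })\right) -G\!\left( Q_{n}(X),Q_{n}(\Sigma )\right) \right].
\end{equation*}
Since $G$ is a $\mathcal{C}^{1}$ diffeomorphism on $(0,+\infty )^{2}$, and $\left( Q_{n}(X),Q_{n}(\Sigma )\right) $ and $\left( Q_{n}(X),Q_{n}(\widehat{\Sigma })\right) $ both converge almost surely to $\left( \eta ^{X},\eta ^{\Sigma }\right) $ (by Lemmas~\ref{cv Q(X)} and~\ref{cv Q(Sigma)}), the Jacobian $J_{G}$ is bounded on a neighborhood of this limit point, to which both statistics belong for $n$ large. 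Applying a one-variable mean-value inequality in the second coordinate,
\begin{equation*}
\left\Vert \sqrt{T_{n}}\left[ G(Q_{n}(X),Q_{n}(\widehat{\Sigma }))-G(Q_{n}(X),Q_{n}(\Sigma ))\right] \right\Vert \leq C_{\omega }\cdot \sqrt{T_{n}}\left\vert Q_{n}(\widehat{\Sigma })-Q_{n}(\Sigma )\right\vert
\end{equation*}
for some almost-surely finite random constant $C_{\omega }$.

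The crucial input is now the third conclusion of Lemma~\ref{cv Q(Sigma)}: under the hypothesis $n^{3}\Delta _{n}^{2H+3}\rightarrow 0$, one has $\sqrt{T_{n}}\left\vert Q_{n}(\widehat{\Sigma })-Q_{n}(\Sigma )\right\vert \to 0$ in $L^{2}(\Omega )$, hence in probability. Combined with the bound above, this shows that the perturbation term is $o_{\mathbb{P}}(1)$. Slutsky's lemma then transfers the central limit theorem from $\left( \check{\theta}_{n},\check{\rho}_{n}\right) $ to $\left( \breve{\theta}_{n},\breve{\rho}_{n}\right) $ with the same Gaussian limit, completing the proof.

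The main obstacle here, such as it is, has already been dispensed with by Lemma~\ref{cv Q(Sigma)}: controlling the $\sqrt{T_{n}}$-rescaled discrepancy between the true integrated process $\Sigma $ and its Riemann-sum surrogate $\widehat{\Sigma }$. This is precisely where the stronger scaling assumption $n^{3}\Delta _{n}^{2H+3}\rightarrow 0$ enters, versus the condition $n\Delta _{n}^{H+1}\rightarrow 0$ sufficient when $\Sigma $ itself is observed. The remainder of the argument is a routine delta-method/Slutsky combination, enabled by the smoothness of $G$ established via the non-vanishing of $\det J_{F}$ in the preceding section.
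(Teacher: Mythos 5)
Your argument is correct, but it is organized differently from the paper's. The paper obtains this theorem by re-running the delta-method chain of Section \ref{XandSigma} with $\widehat{\Sigma }$ in place of $\Sigma $: it perturbs at the level of the auxiliary estimator, replacing $\widetilde{\rho }_{n}(\Sigma )$ by $\widetilde{\rho }_{n}(\widehat{\Sigma })$ and invoking the second halves of Theorems \ref{strong consistency rho Discrete} and \ref{convergence in law theorem discrete} (which is where the condition $n^{3}\Delta _{n}^{2H+3}\rightarrow 0$ enters there), and then passes through $G\circ L$ exactly as before. You instead perturb at the level of the final estimator, writing $\left( \breve{\theta}_{n},\breve{\rho}_{n}\right) -\left( \check{\theta}_{n},\check{\rho}_{n}\right) =G\left( Q_{n}(X),Q_{n}(\widehat{\Sigma })\right) -G\left( Q_{n}(X),Q_{n}(\Sigma )\right) $ and controlling it by the local Lipschitz property of $G$ together with the third conclusion of Lemma \ref{cv Q(Sigma)}, then concluding by Slutsky. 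Both routes ultimately rest on the same estimate (\ref{convergence Q(Sigma)-Q(SigmaHat)}); yours is arguably cleaner because it bypasses the composite map $G\circ L$ entirely at the perturbation stage. Two small points you should make explicit: (i) your reduction $n^{3}\Delta _{n}^{2H+3}\rightarrow 0\Rightarrow n\Delta _{n}^{H+1}\rightarrow 0$ is fine, but you also implicitly need $Q_{n}(\widehat{\Sigma })\rightarrow \eta ^{\Sigma }$ almost surely to keep the segment joining the two data points inside a compact neighborhood of $(\eta ^{X},\eta ^{\Sigma })$ where $J_{G}$ is bounded; this requires $n^{1+\alpha }\Delta _{n}^{H+1}\rightarrow 0$ for some $\alpha >0$, which does follow from the hypothesis because $\Delta _{n}=o\left( n^{-3/(2H+3)}\right) $ and $3(H+1)/(2H+3)>1$ for $H\in (\tfrac{1}{2},\tfrac{3}{4})$, but it is not automatic from what you wrote. (ii) The product of an almost surely finite random constant $C_{\omega }$ with a sequence tending to $0$ in probability tends to $0$ in probability, which is exactly what you need since (\ref{convergence Q(Sigma)-Q(SigmaHat)}) is only an $L^{2}$ (hence in-probability) statement; this is worth a sentence since the rest of your error terms are controlled almost surely.
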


\subsection{$X$ and $V$ are observed}

When both $X$ and $V$ are observed, the estimator of $\theta ^{\ast }$ based
on continuous data is $\widehat{\theta }_{T}$ given in (\ref%
{theta^fractional case}) but the estimator of $\rho $ becomes the usual
full-observation estimator of an fBm-driven Ornstein-Uhlenbeck process as in
\cite{HN}, i.e.%
\begin{equation*}
\overline{\rho }_{T}=-\frac{\int_{0}^{T}{V}_{t}\delta {V}_{t}}{\int_{0}^{T}{V%
}_{t}^{2}dt}.
\end{equation*}%
Following similar arguments as in the beginning of Section \ref{XandSigma},
the natural candidate for the estimator based on discrete data of $X$ and $V$
is the pair $\left( \underline{\theta }_{n},\underline{\rho }_{n}\right) $
defined as the solution of the following system:%
\begin{equation*}
\left\{
\begin{array}{lcl}
\underline{\theta }_{n}+\underline{\rho }_{n}=\frac{\frac{H\Gamma (2H)}{(%
\underline{\rho }_{n}-\underline{\theta }_{n})}[(\underline{\rho }%
_{n})^{2-2H}-(\underline{\theta }_{n})^{2-2H}]}{Q_{n}(X)} &  &  \\
\underline{\rho }_{n}=\left( \frac{H\Gamma (2H)}{Q_{n}(V)}\right) ^{\frac{1}{%
2H}}. &  &
\end{array}%
\right.  \label{new system with X and V observed}
\end{equation*}%
We see that $\underline{\rho }_{n}$ is defined explicitly autonomously via
the discrete-data-based statistic $Q_{n}(V)$. With $\underline{\rho }_{n}$
now known, elementary manipulations yield that $\underline{\theta }_{n}$ is
precisely the solution of the following simple equation%
\begin{equation*}
\left( \underline{\theta }_{n}\right) ^{2-2H}-\left( \frac{Q_{n}(X)}{H\Gamma
(2H)}\right)\left( \underline{\theta }_{n}\right) ^{2}=\left( \underline{%
\rho }_{n}\right) ^{2-2H}-\left( \frac{Q_{n}(X)}{H\Gamma (2H)}\right)\left(
\underline{\rho }_{n}\right) ^{2}.
\end{equation*}%
Define
\begin{equation*}
\overline{F}(x,y)=H\Gamma(2H)\times\left\{
\begin{array}{lcl}
\left(\frac{y^{2-2H}-x^{2-2H}}{y^2-x^2}, y^{-2H}\right)\quad\mbox{if }\
x\neq y &  &  \\
\left((1-H)x^{-2H}, x^{-2H}\right)\quad\mbox{if }\ x=y. &  &
\end{array}%
\right.
\end{equation*}
Its Jacobian is given, for every $(x,y)\in(0,+\infty )^{2}$ such that $x\neq
y$, by%
\begin{equation*}
J_{\overline{F}}\left( x,y\right) =\Gamma(2H+1)%
\begin{pmatrix}
\frac{\left( 1-H\right) x^{1-2H}\left( x^2-y^2\right)
-x\left(x^{2-2H}-y^{2-2H}\right)}{\left( x^2-y^2\right) ^{2}} & \frac{\left(
1-H\right) y^{1-2H}\left( y^2-x^2\right) -y\left(y^{2-2H}-x^{2-2H}\right)}{%
\left( x^2-y^2\right) ^{2}} \\
0 & -y^{-2H-1}%
\end{pmatrix}%
.  \label{Jacobian overline{F}}
\end{equation*}
Thus the Jacobian of is inverse $\overline{G}$ is as follows
\begin{equation*}
J_{\overline{G}}\left( a,b\right) =\frac{\Gamma(2H+1)}{\det J_{\overline{F}%
}\left(x,y\right) }%
\begin{pmatrix}
-y^{-2H-1} & -\frac{\left( 1-H\right) y^{1-2H}\left( y^2-x^2\right)
-y\left(y^{2-2H}-x^{2-2H}\right)}{\left( x^2-y^2\right) ^{2}} \\
0 & \frac{\left( 1-H\right) x^{1-2H}\left( x^2-y^2\right)
-x\left(x^{2-2H}-y^{2-2H}\right)}{\left( x^2-y^2\right) ^{2}}%
\end{pmatrix}%
;\left( x,y\right) =\overline{G}\left( a,b\right).
\end{equation*}%
Using same arguments as in Section \ref{XandSigma} we obtain

\begin{theorem}
\label{convergence in law of check theta case X,V}Assume that $\Delta
_{n}\leqslant n^{\alpha }\ $for some $\alpha \in (-\infty ,1/H)$. Then, as $%
n\longrightarrow \infty $
\begin{equation*}
\left( \overline{\theta }_{n},\overline{\rho }_{n}\right) \longrightarrow
\left( \theta ,\rho \right)
\end{equation*}%
almost surely.
\end{theorem}

\begin{theorem}
\label{convergence in law of check theta case X,V}Suppose that $%
H\in(\frac12,\frac34)$ and $n\Delta_n^{H+1}\rightarrow0$. Then, as $%
n\rightarrow \infty $
\begin{equation*}
\sqrt{T_{n}}\left( \overline{\theta }_{n}-\theta ,\overline{\rho }_{n}-\rho
\right) \overset{law}{\longrightarrow }\mathcal{N}(0,^tQ\ ^tP\ \Gamma\ P\ Q )
\end{equation*}

where $\Gamma $ and $P$ are defined respectively in (\ref{matrix Gamma}) and
(\ref{matrix Sigma}), and where
\begin{equation*}
Q=\left(
\begin{matrix}
\frac{\partial f_{1}}{\partial u}(\theta ^{\ast },\rho ) & \frac{\partial
f_{2}}{\partial u}(\theta ^{\ast },\rho ) \\
\frac{\partial f_{1}}{\partial v}(\theta ^{\ast },\rho ) & \frac{\partial
f_{2}}{\partial v}(\theta ^{\ast },\rho )%
\end{matrix}%
\right)
\end{equation*}%
such that
\begin{equation*}
(f_{1},f_{2})(u,v)=\overline{G}ol\left( u,v\right)
\end{equation*}%
with
\begin{equation*}
l(u,v)=\overline{L}\left( (\rho +\theta )\eta ^{X},\rho ^{1-2H},u,v\right)
\end{equation*}%
and
\begin{equation*}
\overline{L}(r,s,u,v):=\left( \frac{r}{u},\frac{s}{v}\right) .
\end{equation*}
\end{theorem}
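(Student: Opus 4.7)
The proof follows the template established in Theorem \ref{convergence in law of check theta}. I first write $(\overline{\theta}_n, \overline{\rho}_n) = \overline{G}(Q_n(X), Q_n(V))$ using the identification already made before the theorem statement, and introduce auxiliary estimators whose joint asymptotic normality is accessible: keep $\widetilde{\theta}_n = J_n^\theta(X)/Q_n(X)$ from Section 4.1, and define the parallel LSE-style statistic $\widetilde{\overline{\rho}}_n := J_n^V(V)/Q_n(V)$, where $J_n^V(V) := -T_n^{-1}\int_0^{T_n} V_t\,\delta V_t$. Since $V$ is itself a fractional Ornstein-Uhlenbeck process of parameter $\rho$, the quantity $\widetilde{\overline{\rho}}_n$ is the discrete-data version of the continuous LSE $\overline{\rho}_{T_n}$ studied in \cite{HN}. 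Solving for $(Q_n(X), Q_n(V))$ then gives $(\overline{\theta}_n, \overline{\rho}_n) = \overline{G}\circ\overline{L}(J_n^\theta(X), J_n^V(V), \widetilde{\theta}_n, \widetilde{\overline{\rho}}_n)$, which is the natural four-variable parameterization matching the definition of the matrix $Q$ in the statement.

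Next, I decompose $\sqrt{T_n}((\overline{\theta}_n,\overline{\rho}_n) - (\theta,\rho)) = s_n + r_n$, where $s_n$ captures the variation in the first two arguments $(J_n^\theta(X), J_n^V(V))$ of $\overline{G}\circ\overline{L}$ and $r_n$ that in the last two $(\widetilde{\theta}_n, \widetilde{\overline{\rho}}_n)$. The term $s_n$ tends to zero almost surely by showing $J_n^\theta(X) = (\rho+\theta)\eta^X + o(1/\sqrt{T_n})$ and $J_n^V(V) = H\Gamma(2H)\rho^{1-2H} + o(1/\sqrt{T_n})$ almost surely; the first is exactly estimate (\ref{estimation1}) from the proof of Theorem \ref{convergence in law of check theta}, and the second follows from the identical argument applied to $V$, using Lemma \ref{1mainInequalities} at parameter $\rho$ and l'H\^opital's rule as in the proof of Lemma \ref{convergence numerator theta}. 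I then Taylor expand $\overline{G}\circ\overline{L}$ in its last two arguments around $(\theta^\ast,\rho)$ to obtain $r_n = \sqrt{T_n}(\widetilde{\theta}_n - \theta^\ast, \widetilde{\overline{\rho}}_n - \rho)\,Q + d_n$, where $Q$ is the Jacobian in the statement and $d_n\to 0$ in probability thanks to the quadratic nature of the Taylor remainder combined with the almost-sure consistency of $(\widetilde{\theta}_n,\widetilde{\overline{\rho}}_n)$ to $(\theta^\ast,\rho)$.

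The main obstacle is therefore to establish the joint asymptotic normality $\sqrt{T_n}(\widetilde{\theta}_n - \theta^\ast, \widetilde{\overline{\rho}}_n - \rho) \overset{law}{\longrightarrow} \mathcal{N}(0, {}^tP\,\Gamma\,P)$. This is done in two substeps. First, under the hypothesis $n\Delta_n^{H+1}\to 0$, the analog of Lemma \ref{cv Q(X)} applied to both $X$ and $V$ reduces the claim to the asymptotic joint normality of the continuous-data pair $\sqrt{T_n}(\widehat{\theta}_{T_n} - \theta^\ast, \overline{\rho}_{T_n} - \rho)$. Second, each of $\widehat{\theta}_{T_n} - \theta^\ast$ and $\overline{\rho}_{T_n} - \rho$ is expressed, via the chain rule and the Skorohod-Young correspondence already exploited in Section 3, as an explicit linear combination (plus almost-surely negligible lower-order terms) of the double stochastic integrals $I_2(f_{T_n}^\theta)/\sqrt{T_n}$ and $I_2(f_{T_n}^\rho)/\sqrt{T_n}$. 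For the first coordinate this is precisely the computation leading to (\ref{expression theta-theta*}); for the second it is the univariate fOU calculation underlying the asymptotic normality of the LSE proven in \cite{HN}. The resulting covariance structure assembles into ${}^tP\,\Gamma\,P$, and the theorem follows by Slutsky's lemma combined with the almost-sure convergence of the data-dependent coefficients in the Taylor expansion.
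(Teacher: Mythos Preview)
Your approach is correct and is precisely the ``same arguments as in Section~\ref{XandSigma}'' that the paper invokes in lieu of a detailed proof: express $(\overline{\theta}_n,\overline{\rho}_n)=\overline{G}\circ\overline{L}(J_n^\theta(X),J_n^V(V),\widetilde{\theta}_n,\widetilde{\overline{\rho}}_n)$, split into $s_n+r_n$, kill $s_n$ via the $o(1/\sqrt{T_n})$ estimates on the numerators, Taylor-expand $r_n$ to produce the Jacobian $Q$, and feed in the joint CLT for the auxiliary pair. The only point deserving care is the identification of the intermediate covariance as ${}^tP\,\Gamma\,P$: the matrix $P$ of (\ref{matrix Sigma}) was built from the decomposition of $(\widehat{\theta}_T-\theta^\ast,\widehat{\rho}_T-\rho^\ast)$, whereas here the relevant pair is $(\widehat{\theta}_T-\theta^\ast,\overline{\rho}_T-\rho)$, whose second coordinate equals $-I_2(f_T^\rho)/L_T$ and thus yields a second column $\bigl(0,\,-1/(H\Gamma(2H)\rho^{-2H})\bigr)^{t}$ rather than $(c^\theta/\eta^{\widehat{L}},c^\rho/\eta^{\widehat{L}})^{t}$; the paper's reuse of the symbol $P$ should be read as ``the analogous $2\times 2$ matrix for this pair,'' and with that understanding your argument goes through verbatim.
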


\renewcommand{\theequation}{A-\arabic{equation}}
\setcounter{equation}{0} 

\section{Appendix}

In this appendix, we present some calculations used in the paper.\vspace*{%
0.12in}

Fix $T>0$. Let $f,g:[0,T]\longrightarrow \mathbb{R}$ {\ be} Hölder
continuous functions of orders $\alpha \in (0,1)$ and $\beta \in (0,1)$
respectively with $\alpha +\beta >1$. Young \cite{Young} proved that the
Riemann-Stieltjes integral (so-called Young integral) $%
\int_{0}^{T}f_{s}dg_{s}$ exists. Moreover, if $\alpha =\beta \in (\frac{1}{2}%
,1)$ and $\phi :\mathbb{R}^{2}\longrightarrow \mathbb{R}$ is a function of
class $\mathcal{C}^{1}$, the integrals $\int_{0}^{.}\frac{\partial \phi }{%
\partial f}(f_{u},g_{u})df_{u}$ and $\int_{0}^{.}\frac{\partial \phi }{%
\partial g}(f_{u},g_{u})dg_{u}$ exist in the Young sense and the following
chain rule holds:
\begin{equation}
\phi (f_{t},g_{t})=\phi (f_{0},g_{0})+\int_{0}^{t}\frac{\partial \phi }{%
\partial f}(f_{u},g_{u})df_{u}+\int_{0}^{t}\frac{\partial \phi }{\partial g}%
(f_{u},g_{u})dg_{u},\quad 0\leqslant t\leqslant T.  \label{chain
rule}
\end{equation}%
As a consequence, if $H\in (\frac{1}{2},1)$ and $(u_{t},\ t\in \lbrack 0,T])$
{\ is } a process with Hölder paths of order $\alpha \in (1-H,1)$, the
integral $\int_{0}^{T}u_{s}dB_{s}^{H}$ is well-defined as {\ a} Young
integral. Suppose moreover that for any $t\in \lbrack 0,T]$, $u_{t}\in
D^{1,2}$, and
\begin{equation*}
P\left( \int_{0}^{T}\int_{0}^{T}|D_{s}u_{t}||t-s|^{2H-2}dsdt<\infty \right)
=1.
\end{equation*}%
Then, by \cite{AN}, $u\in Dom\delta $ and for every $t\in \lbrack 0,T]$,
\begin{equation}  \label{link}
\int_{0}^{t}u_{s}dB_{s}^{H}=\int_{0}^{t}u_{s}\delta
B_{s}^{H}+H(2H-1)\int_{0}^{t}\int_{0}^{t}D_{s}u_{r}|s-r|^{2H-2}drds.
\end{equation}%
In particular, when $\varphi $ is a non-random Hölder continuous function of
order $\alpha \in (1-H,1)$, we obtain
\begin{equation}
\int_{0}^{T}\varphi _{s}dB_{s}^{H}=\int_{0}^{T}\varphi _{s}\delta
B_{s}^{H}=B^{H}(\varphi ).  \label{non random}
\end{equation}%
\newline
In addition, for all $\varphi ,\ \psi \in |\mathcal{H}|$,
\begin{equation*}
E\left( \int_{0}^{T}\varphi _{s}dB_{s}^{H}\int_{0}^{T}\psi
_{s}dB_{s}^{H}\right) =H(2H-1)\int_{0}^{T}\int_{0}^{T}\varphi (u)\psi
(v)|u-v|^{2H-2}dudv.
\end{equation*}

\begin{lemma}
\label{1mainInequalities}Let $m, m^{\prime}>0$ and let $X^m$ be the process
defined in (\ref{Xm}). Then,

\begin{enumerate}

\item [1)] $\lambda (m,m^{\prime }):=H(2H-1)\int_{0}^{\infty }\int_{0}^{\infty
}e^{-ms}e^{-m^{\prime }r}|s-r|^{2H-2}drds=\frac{H\Gamma (2H)}{m+m^{\prime }}%
\left( m^{1-2H}+{m^{\prime }}^{1-2H}\right) $,

\item [2)] $0\leqslant \lambda (m,m^{\prime })-E(X_{t}^{m}X_{t}^{m^{\prime
}})\leqslant c(H,m,m^{\prime })e^{-t/2}$,

\item [3)] $\sup_{t\geq 0}E[|X_{t}^{m}|^{p}]\leqslant c(H,m,p)<\infty $,

\item [4)] $0\leqslant E(X_{t}^{m}X_{s}^{m^{\prime }})\leqslant c(H,m,m^{\prime
})|t-s|^{2H-2},$

\item [5)] For every $\varepsilon >0,\frac{X_{T}^{m}}{T^{\varepsilon }}%
\rightarrow 0$ almost surely as $T\rightarrow \infty $,

\item [6)] $E(|X_{t}^{m}-X_{s}^{m}|^{p})\leqslant c(H,m,p)|t-s|^{pH}$.
\end{enumerate}
\end{lemma}

\begin{proof}
To prove equality 1), we just write \newline
\begin{eqnarray*}
\lambda (m,m^{\prime }) &=&H(2H-1)\int_{0}^{\infty }\int_{0}^{\infty
}e^{-ms}e^{-m^{\prime }r}|s-r|^{2H-2}drds \\
&=&H(2H-1)\int_{0}^{\infty }dse^{-ms}\int_{0}^{s}dre^{-m^{\prime
}r}|s-r|^{2H-2} \\
&&+H(2H-1)\int_{0}^{\infty }dse^{-ms}\int_{s}^{\infty }dre^{-m^{\prime
}r}|s-r|^{2H-2} \\
&=&\frac{H\Gamma (2H)}{m+m^{\prime }}\left( m^{1-2H}+{m^{\prime }}%
^{1-2H}\right) .
\end{eqnarray*}
For the point 2) see \cite{HS}. For 3) and 6) we refer to \cite{GKN}, and
for 4) and 5) see \cite[Lemma 5.2 and Lemma 5.4]{HN}
\end{proof}

\begin{theorem}
\label{key asymptotic normality} Let $H\in(\frac12,\frac34)$. Define for $%
m>0 $%
\begin{equation*}
f_T^{m}(u,v) :=\frac{1}{2}e^{-m|u-v|}\leavevmode%
\hbox{\rm
\small1\kern-0.35em\normalsize1}_{\{[0,T]\}}^{\otimes2}(u,v).
\end{equation*}
Then, as $T\longrightarrow \infty$,
\begin{eqnarray}
&&\frac{1}{\sqrt{T}}\left( I_2(f^{\theta}_T), I_2(f^{\rho}_T) \right)\overset%
{law}{\longrightarrow} \mathcal{N}(0,\Gamma)  \label{convergence in
law key}
\end{eqnarray}
where $\Gamma$ is a symmetric nonnegative definite matrix which has the
following explicit expression
\begin{eqnarray}  \label{matrix Gamma}
\Gamma=\eta_H\left(%
\begin{matrix}
l_1 & l_3 \\
l_3 & l_2%
\end{matrix}%
\right)
\end{eqnarray}%
where $l_1=\theta^{1-4H}$, $l_2= \rho^{1-4H} $, $l_3=\frac{2\rho\theta}{%
(4H-1)(\rho^2-\theta^2)}\left[\theta^{1-4H}-\rho^{1-4H}\right]$ and
\begin{equation*}
\eta_H=H^2(4H-1)\left[\Gamma(2H)^2+\frac{\Gamma(2H)\Gamma(3-4H)\Gamma(4H-1)}{%
\Gamma(2-2H)}\right].
\end{equation*}
\end{theorem}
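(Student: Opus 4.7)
My plan combines the Cram\'er-Wold device with Theorem \ref{NO} applied inside the second Wiener chaos $\mathcal{H}_{2}$. For every $(a,b)\in\mathbb{R}^{2}$, the linear combination $F_{T}:=aI_{2}(f^{\theta}_{T})+bI_{2}(f^{\rho}_{T})=I_{2}(af^{\theta}_{T}+bf^{\rho}_{T})$ still lies in $\mathcal{H}_{2}$, so joint convergence of $T^{-1/2}(I_{2}(f^{\theta}_{T}),I_{2}(f^{\rho}_{T}))$ to $\mathcal{N}(0,\Gamma)$ reduces to two tasks: (i) identify the three scaled limits $T^{-1}E[I_{2}(f^{m}_{T})I_{2}(f^{m'}_{T})]\to \Gamma_{mm'}$ for $m,m'\in\{\theta,\rho\}$, which by bilinearity yields $T^{-1}\mathrm{Var}(F_{T})\to \sigma^{2}(a,b):=a^{2}\Gamma_{11}+2ab\Gamma_{12}+b^{2}\Gamma_{22}$; and (ii) apply Theorem \ref{NO} to $T^{-1/2}F_{T}$, which requires $T^{-1}\|DF_{T}\|^{2}_{\mathcal{H}}$ to converge in $L^{2}$ to the constant $2\sigma^{2}(a,b)$.

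\textbf{Covariance asymptotics.} By the Wiener isometry applied to symmetric kernels,
\[
E[I_{2}(f^{m}_{T})I_{2}(f^{m'}_{T})] = \frac{[H(2H-1)]^{2}}{2}\int_{[0,T]^{4}}e^{-m|u-v|}e^{-m'|s-r|}|u-s|^{2H-2}|v-r|^{2H-2}\,du\,dv\,ds\,dr.
\]
I would change variables to $x=u-s$, $y=v-r$ (keeping $s,r$); the exponentials make the corrections from extending $(x,y)$ to $\mathbb{R}^{2}$ exponentially small, leaving a linear-in-$T$ leading order. The diagonal contribution, in which $u,s$ and $v,r$ pair up, produces the $\Gamma(2H)^{2}$ term of $\eta_{H}$ via the identity $\int_{\mathbb{R}}e^{-\beta|x|}|x|^{2H-2}dx=2\Gamma(2H-1)\beta^{1-2H}$ combined with $H(2H-1)\Gamma(2H-1)=H\Gamma(2H)$. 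The complementary cross-pairing contribution is handled by a two-dimensional Beta-type integral and the reflection formula $\Gamma(2H-1)\Gamma(2-2H)=\pi/\sin((2H-1)\pi)$, producing the second summand $\Gamma(2H)\Gamma(3-4H)\Gamma(4H-1)/\Gamma(2-2H)$ of $\eta_{H}$. When $m\neq m'$, the algebraic identity $(m^{2}-{m'}^{2})^{-1}(m^{1-4H}-{m'}^{1-4H})$ emerges naturally when one factors differences of exponentials, yielding the off-diagonal coefficient $l_{3}$; finiteness uses the standing hypothesis $\theta\neq\rho$.

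\textbf{Fourth-moment condition, main obstacle, and conclusion.} The product formula for Wiener chaoses gives
\[
\|DF_{T}\|^{2}_{\mathcal{H}} = 2\,\mathrm{Var}(F_{T}) + 4\,I_{2}(g_{T}\otimes_{1}g_{T}),\qquad g_{T}:=af^{\theta}_{T}+bf^{\rho}_{T};
\]
the deterministic summand divided by $T$ converges to $2\sigma^{2}(a,b)$ by the previous step. Consequently Theorem \ref{NO} reduces to showing $T^{-2}\|g_{T}\otimes_{1}g_{T}\|^{2}_{\mathcal{H}^{\otimes 2}}\to 0$, which by bilinearity amounts to proving $\|f^{m}_{T}\otimes_{1}f^{m'}_{T}\|^{2}_{\mathcal{H}^{\otimes 2}}=o(T^{2})$ for each pair $m,m'\in\{\theta,\rho\}$. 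Explicit evaluation shows that this contraction norm is an integral with effective diagonal decay $|\cdot|^{4H-4}$, which is integrable over $[0,T]^{2}$ and delivers an $O(T)$ bound precisely when $H<3/4$, matching the hypothesized threshold and explaining its breakdown at the Rosenblatt transition $H=3/4$. The main obstacle is the exact identification of $\eta_{H}$ in the covariance step, particularly the second summand: it arises from a non-negligible boundary contribution which must be tracked carefully across the change of variables, with a delicate Beta-function and reflection-formula manipulation to collapse three Gamma values into the displayed form. Once all constants have been identified, Cram\'er-Wold yields the bivariate convergence $T^{-1/2}(I_{2}(f^{\theta}_{T}),I_{2}(f^{\rho}_{T}))\overset{\text{law}}{\to}\mathcal{N}(0,\Gamma)$.
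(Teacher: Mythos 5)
Your proposal follows essentially the same route as the paper: Cram\'er--Wold reduction to $F_T=aI_2(f_T^\theta)+bI_2(f_T^\rho)$, identification of the three scaled covariances via the Wiener isometry and Gamma-function identities, and verification of condition (ii) of Theorem \ref{NO}, where your contraction criterion $T^{-2}\|g_T\otimes_1 g_T\|^2_{\HH^{\otimes 2}}\to 0$ is exactly the quadruple integral $\int_{[0,T]^4}|s-r|^{2H-2}|v-u|^{2H-2}|u-r|^{2H-2}|v-s|^{2H-2}\,ds\,dr\,du\,dv = O(T^{8H-4})$ that the paper bounds directly when estimating the variance of $\|DG_T\|^2_{\HH}$, with the same threshold $H<3/4$. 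The covariance constants are only sketched in your write-up (the paper likewise cites \cite{HN} for the diagonal terms and computes the cross term $l_3$ explicitly via the representation $(x+w)^{2H-2}=\Gamma(2-2H)^{-1}\int_0^\infty \xi^{1-2H}e^{-\xi(w+x)}d\xi$), but the strategy and all key steps match.
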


\begin{proof}
Notice that for (\ref{convergence in law key}) to hold it suffices that
prove that for every $a,b\in \mathbb{R}$,
\begin{equation*}
G_{T}:=aI_{2}(f_{T}^{\theta })+bI_{2}(f_{T}^{\rho })
\end{equation*}%
converges in law to $\mathcal{N}\left( 0,(a,b)\Gamma ^{t}(a,b)\right) $ as $%
T\longrightarrow \infty $.\newline
Fix $a,b\in \mathbb{R}$. Since $G_{T}$ is a multiple integral, by the
isometry property of double stochastic integral $I_{2}$, we get the variance
of $G_{T}$ as follows
\begin{equation*}
EG_{T}^{2}=\frac{\alpha _{H}^{2}}{2}\left( a^{2}\frac{I_{T}^{1}}{T}+b^{2}%
\frac{I_{T}^{2}}{T}+2ab\frac{I_{T}^{3}}{T}\right) ,
\end{equation*}%
where
\begin{equation*}
I_{T}^{1}=\int_{[0,T]^{4}}e^{-\theta |t-s|}e^{-\theta
|u-v|}|t-u|^{2H-2}|s-v|^{2H-2}dtdsdudv,
\end{equation*}%
\begin{equation*}
I_{T}^{2}=\int_{[0,T]^{4}}e^{-\rho |t-s|}e^{-\rho
|u-v|}|t-u|^{2H-2}|s-v|^{2H-2}dtdsdudv,
\end{equation*}%
\begin{equation*}
I_{T}^{3}=\int_{[0,T]^{4}}e^{-\rho |t-s|}e^{-\theta
|u-v|}|t-u|^{2H-2}|s-v|^{2H-2}dtdsdudv.
\end{equation*}%
Using the same argument as in the proof of \cite[Theorem 3.4]{HN}, we have
as $T\rightarrow \infty $
\begin{equation*}
lim_{T\rightarrow \infty }\frac{\alpha _{H}^{2}}{2}\frac{I_{T}^{1}}{T}=\eta
_{H}l_{1}
\end{equation*}%
and%
\begin{equation*}
lim_{T\rightarrow \infty }\frac{\alpha _{H}^{2}}{2}\frac{I_{T}^{2}}{T}=\eta
_{H}l_{2}.
\end{equation*}%
Now, let us estimate $I_{T}^{3}$. We have
\begin{eqnarray*}
\frac{dI_{T}^{3}}{dT} &=&2\left[ \int_{[0,T]^{3}}e^{-\rho (T-s)}e^{-\theta
|u-v|}(T-u)^{2H-2}|s-v|^{2H-2}dsdudv\right] \\
&&+2\left[ \int_{[0,T]^{3}}e^{-\rho |t-s|}e^{-\theta
(T-v)}(T-t)^{2H-2}|s-v|^{2H-2}dsdudv\right] \\
:= &&A_{T}(\rho ,\theta )+A_{T}(\theta ,\rho ).
\end{eqnarray*}%
Making the change of variables $T-s=x$ , $T-u=y$ and $T-v=z$
\begin{equation*}
A_{T}(\rho ,\theta )=2\int_{[0,T]^{3}}e^{-\rho x}e^{-\theta
|y-z|}y^{2H-2}|z-x|^{2H-2}dxdydz.
\end{equation*}%
This implies
\begin{equation*}
lim_{T\rightarrow \infty }A_{T}(\rho ,\theta )=A_{\infty }(\rho ,\theta
)=2\int_{[0,\infty ]^{3}}e^{-\rho x}e^{-\theta
|y-z|}y^{2H-2}|z-x|^{2H-2}dxdydz.
\end{equation*}%
Making the change of variables $z-x=w$, we obtain
\begin{eqnarray*}
A_{\infty }(\rho ,\theta ) &=&2\left[ \int_{[0,\infty
)^{2}}\int_{-x}^{\infty }e^{-\rho x}e^{-\theta
|y-w-x|}y^{2H-2}|w|^{2H-2}dwdxdy\right] \\
&=&2\left[ \int_{[0,\infty )^{2}}\int_{-x}^{y-x}e^{-\rho x}e^{-\theta
(y-w-x)}y^{2H-2}|w|^{2H-2}dwdxdy\right] \\
&&+2\left[ \int_{(0,\infty )^{2}}\int_{y-x}^{\infty }e^{-\rho x}e^{\theta
(y-w-x)}y^{2H-2}|w|^{2H-2}dwdxdy\right] .
\end{eqnarray*}%
Integrating in $x$ we get
\begin{eqnarray*}
A_{\infty }(\rho ,\theta ) &=&\frac{2}{(-\rho +\theta )}\left[
\int_{0}^{\infty }\int_{-\infty }^{+\infty }(e^{(-\rho +\theta
)(y-w)}-e^{(-\rho +\theta )[(-w)\vee 0]})1_{[(y-w)-((-w)\vee 0)]_{+}}\right.
\\
&&\left. \times e^{-\theta (y-w)}y^{2H-2}|w|^{2H-2}\phantom{\int}dwdy\right]
\\
&&-\frac{2}{(-\rho -\theta )}\left[ \int_{0}^{\infty }\int_{-\infty
}^{+\infty }e^{(-\theta -\rho )[(y-w)\vee 0]}e^{\theta
(y-w)}y^{2H-2}|w|^{2H-2}dwdy\right] \\
&=&\frac{2}{\theta -\rho }A^{1}(\rho ,\theta )+\frac{2}{\rho +\theta }%
A^{2}(\rho ,\theta ).
\end{eqnarray*}%
Furthermore
\begin{eqnarray*}
A^{1}(\rho ,\theta ) &=&\int_{0}^{\infty }\int_{0}^{\infty }(e^{-\rho
(y+w)}-e^{-\rho w}e^{-\theta y})y^{2H-2}w^{2H-2}dwdy \\
&&+\int_{0}^{\infty }\int_{0}^{y}(e^{-\rho (y-w)}-e^{-\theta
(y-w)})y^{2H-2}w^{2H-2}dwdy \\
&=&\left( \int_{0}^{\infty }y^{2H-2}e^{-\rho y}dy\right) ^{2}-\left(
\int_{0}^{\infty }y^{2H-2}e^{-\theta y}dy\right) \left( \int_{0}^{\infty
}w^{2H-2}e^{-\rho w}dw\right) \\
&&+\int_{0}^{\infty }\int_{w}^{\infty }(e^{-\rho (y-w)}-e^{-\theta
(y-w)})y^{2H-2}w^{2H-2}dydw \\
&=&\Gamma (2H-1)^{2}\rho ^{1-2H}\left[ \rho ^{1-2H}-\theta ^{1-2H}\right] \\
&&+\int_{0}^{\infty }\int_{0}^{\infty }(e^{-\rho x}-e^{-\theta
x})(x+w)^{2H-2}w^{2H-2}dxdw.
\end{eqnarray*}%
Using $(x+w)^{2H-2}=\frac{1}{\Gamma (2-2H)}\int_{0}^{\infty }\xi
^{1-2H}e^{-\xi (w+x)}d\xi $, the term $A^{1}$ becomes
\begin{eqnarray*}
A^{1}(\rho ,\theta ) &=&\Gamma (2H-1)^{2}\rho ^{1-2H}\left[ \rho
^{1-2H}-\theta ^{1-2H}\right] \\
&&+\frac{\Gamma (2H-1)}{\Gamma (2-2H)}\int_{0}^{\infty }\int_{0}^{\infty
}\xi ^{2-4H}e^{-\xi x}(e^{-\rho x}-e^{-\theta x})(d\xi dx \\
&=&\Gamma (2H-1)^{2}\rho ^{1-2H}\left[ \rho ^{1-2H}-\theta ^{1-2H}\right] \\
&&+\frac{\Gamma (2H-1)\Gamma (3-4H)\Gamma (4H-2)}{\Gamma (2-2H)}\left[ \rho
^{2-4H}-\theta ^{2-4H}\right] .
\end{eqnarray*}%
Similarly, we obtain
\begin{eqnarray*}
A^{2}(\rho ,\theta ) &=&\int_{0}^{\infty }\int_{0}^{\infty }e^{-\rho
(y+w)}y^{2H-2}w^{2H-2}dwdy \\
&&+\int_{0}^{\infty }\int_{0}^{y}e^{-\rho (y-w)}y^{2H-2}w^{2H-2}dwdy \\
&&+\int_{0}^{\infty }\int_{y}^{\infty }e^{\theta (y-w)}y^{2H-2}w^{2H-2}dwdy
\\
&=&\rho ^{2-4H}\Gamma (2H-1)^{2} \\
&&+\frac{\Gamma (2H-1)\Gamma (3-4H)\Gamma (4H-2)}{\Gamma (2-2H)}\left[ \rho
^{2-4H}+\theta ^{2-4H}\right] .
\end{eqnarray*}%
Thus, $A_{\infty }(\theta ,\rho )$ is also obtained. \newline
Consequently
\begin{eqnarray*}
lim_{T\rightarrow \infty }\alpha _{H}^{2}\frac{I_{T}^{3}}{T} &=&\alpha
_{H}^{2}(A_{\infty }(\rho ,\theta )+A_{\infty }(\theta ,\rho ))  \notag \\
&=&2\eta _{H}l_{3}.  \label{I^3}
\end{eqnarray*}%
Finally, combining the above convegences we deduce that as $T\rightarrow
\infty $
\begin{equation*}
EG_{T}^{2}\longrightarrow (a,b)\ \Gamma \ ^{t}(a,b).
\end{equation*}%
On the other hand
\begin{eqnarray*}
D_{s}G_{T} &=&\frac{1}{\sqrt{T}}\left( \int_{0}^{s}\left( ae^{-\theta
(s-t)}+be^{-\rho (s-t)}\right) \delta B_{t}+\int_{s}^{T}\left( ae^{-\theta
(t-s)}+be^{-\rho (t-s)}\right) \delta B_{t}\right) \\
:= &&\frac{1}{\sqrt{T}}\left( X_{s}^{a,b}+Y_{s,T}^{a,b}\right) .
\end{eqnarray*}%
Hence
\begin{eqnarray*}
\Vert DG_{T}\Vert _{\mathcal{H}}^{2} &=&\frac{\alpha _{H}}{{T}}%
\int_{0}^{T}\int_{0}^{T}\left( X_{s}^{a,b}+Y_{s,T}^{a,b}\right) \left(
X_{r}^{a,b}+Y_{r,T}^{a,b}\right) |r-s|^{2H-2}dsdr \\
&=&\frac{\alpha _{H}}{{T}}\int_{0}^{T}\int_{0}^{T}\left(
X_{s}^{a,b}X_{r}^{a,b}+2X_{r}^{a,b}Y_{s,T}^{a,b}+Y_{s,T}^{a,b}Y_{r,T}^{a,b}%
\right) |r-s|^{2H-2}dsdr \\
:= &&\frac{\alpha _{H}}{{T}}\left(
A_{T}^{a,b}+B_{T}^{a,b}+C_{T}^{a,b}\right) .
\end{eqnarray*}%
Since $X_{s}^{a,b}$ belongs to the first Wiener chaos of $B^{H}$,
\begin{equation*}
E\left( \left\vert A_{T}^{a,b}-EA_{T}^{a,b}\right\vert ^{2}\right)
=2%
\int_{[0,T]^{4}}E(X_{s}^{a,b}X_{r}^{a,b})E(X_{u}^{a,b}X_{v}^{a,b})|u-r|^{2H-2}|v-s|^{2H-2}dsdrdudv.
\end{equation*}%
Using similar arguments as in \cite[Lemma 5.4 of web-only Appendix]{HN},
\begin{eqnarray*}
E\left( \left\vert A_{T}^{a,b}-EA_{T}^{a,b}\right\vert ^{2}\right)
&\leqslant
&c(H,\theta,\rho)%
\int_{[0,T]^{4}}|s-r|^{2H-2}|v-u|^{2H-2}|u-r|^{2H-2}|v-s|^{2H-2}dsdrdudv \\
&\leqslant &\frac{c(H,\theta,\rho)}{T^{4-8H}}%
\int_{[0,1]^{4}}|s-r|^{2H-2}|v-u|^{2H-2}|u-r|^{2H-2}|v-s|^{2H-2}dsdrdudv.
\end{eqnarray*}%
Using the same argument for $B_{T}^{a,b}$ and $C_{T}^{a,b}$ we conclude that
\begin{eqnarray*}
&&E\left( \left\vert \Vert DG_{T}\Vert _{\mathcal{H}}^{2}-E\Vert DG_{T}\Vert
_{\mathcal{H}}^{2}\right\vert ^{2}\right) \\
&\leqslant &\frac{c(H,\theta,\rho)}{T^{6-8H}}%
\int_{[0,1]^{4}}|s-r|^{2H-2}|v-u|^{2H-2}|u-r|^{2H-2}|v-s|^{2H-2}dsdrdudv \\
&\longrightarrow &0
\end{eqnarray*}%
as $T\longrightarrow \infty $, because $H<\frac{3}{4}$. This completes the
proof of Theorem \ref{key asymptotic normality}.
\end{proof}

\end{document}